\documentclass{article}
\usepackage{graphicx} % Required for inserting images
\usepackage[utf8]{inputenc}
\usepackage{amssymb,amsthm,amsmath,amstext,amscd}
\usepackage{xcolor}
\usepackage[normalem]{ulem}
\usepackage{tikz-cd}
\usepackage{indentfirst}
\usepackage{hyperref}
\usepackage{verbatim}

\allowdisplaybreaks
\numberwithin{equation}{section}\theoremstyle{plain}
\newtheorem{theorem}{Theorem}

\newtheorem{corollary}[theorem]{Corollary}
\newtheorem{proposition}[theorem]{Proposition}
\newtheorem{lemma}[theorem]{Lemma}
\newtheorem{example}[theorem]{Example}
\newtheorem{remark}[theorem]{Remark}

\newcommand{\der}{{\rm Der}}
\newcommand\I{\mathbb I}

\DeclareMathOperator{\Aut}{Aut}
\DeclareMathOperator{\LND}{LND}
\DeclareMathOperator{\Inn}{Inn}

\title{On the isotropy of differential Ore extensions}
\author{Rene Baltazar, Leonardo Duarte Silva, Grasiela Martini}
\date{}

\begin{document}

\maketitle

\begin{abstract}
%Let $\Bbbk$ be an algebraically closed field of characteristic zero and 
Let $A_h=\Bbbk[x][t;d]$ be the differential Ore extension. We study the action of the automorphism group $\Aut(A_h)$ on the derivations $\der(A_h)$ and explicitly describe, using Nowicki’s decomposition of the derivations of $A_h$, the isotropy groups of this action.
More precisely, we first obtain an explicit description of $\Aut(A_h)$ for $\deg(h)\geq 1$;
then, we determine the isotropy groups of the derivations of the form $D=ad_w+\Delta_{s(x)}$, which are all the derivations in the square-free case, that is, when $\gcd(h,h')=1$.
After, in the singular case $\gcd(h,h')\neq 1$, where special derivations of type $E_H$ appear, we show that the isotropy problem is ruled by a suitable localization and by the element $w^\ast=w+\psi^{-1}H$ with $\psi=\gcd(h,h')$.
This allows us to obtain a general criterion for the isotropy of a derivation $D=ad_w+E_H+\Delta_{s(x)}$.
Finally, explicit results are provided to illustrate the new phenomena that arise in this setting.

\end{abstract}

\maketitle

\section{Introduction}

Let $\Bbbk$ be an algebraically closed field of characteristic zero.
For a $\Bbbk$-algebra $R$, we denote by $\Aut(R)$ and $\der(R)$ the groups of $\Bbbk$-automorphisms and $\Bbbk$-derivations of $R$, respectively.
Note that $\Aut(R)$ acts by conjugation over $\der(R)$: given $\delta \in \der(R)$ and $\rho \in \Aut(R)$, then $\rho \delta \rho^{-1} \in \der(R)$.
The \emph{isotropy subgroup of $\delta$}, with respect to this group action, is defined
as
$$\Aut_\delta(R):=\{ \rho \in \Aut(R) \ \vert \ \rho \delta = \delta\rho \}. $$

The structure and behavior of an isotropy subgroup remain largely unexplored.
However, aspects of symmetry and connections with several areas are being investigated: isotropy groups play a fundamental role across various areas of mathematics, appearing naturally in the study of symmetries of foliations, the structure of Hochschild cohomology, and the invariants under group actions in Algebra and Geometry \cite{PanBalta, PanMendes}. 

The differential Ore extensions arise from an interesting classical result: an Ore extension over the polynomial algebra $\Bbbk[x]$ is either a quantum plane, a quantum Weyl algebra, or an infinite-dimensional associative algebra $A_h$ which is generated by the elements $x$ and $t$ satisfying $tx-xt = h$, where $h \in \Bbbk[x]$ (for more details, see \cite{SBVA2025, SBVA2024}).
Since the isotropy groups of a quantum plane and a quantum Weyl algebra have already been investigated in \cite{SBVA2025} and \cite{SBVA2024}, the present work focuses on the isotropy groups of $A_h$.

In the Section \ref{section2}, we explore some basic concepts and important preliminary results needed for this work. This section gives an overview of the algebraic structures and main definitions used in our study, as well as key lemmas and propositions that will be important in the later sections.

In Section \ref{section3} we provide an explicit description of the automorphism group
$\Aut(A_h)$ for differential Ore extensions $A_h$ with $\deg(h)\ge1$.
Using a convenient change of variable, we reduce the study to the case where the
defining polynomial $h \in \Bbbk [x]$ is monic and has a suitable coefficient zero (the one that accompanies the term of degree
$\deg(h)-1$), which allows us to apply and extend well-known classification results.

Section \ref{section4} is devoted to the study of the action of $\Aut(A_h)$ on the space of
derivations and, in particular, to the explicit description of isotropy
subgroups in the square-free case, that is, under the assumption $\gcd(h,h')=1$.
Using the canonical decomposition of derivations due to Nowicki \cite[Thm. 10.1]{Nowicki}, we analyze how
each component behaves under conjugation and show that the spaces of the inner
derivations and the differential derivations of type $\Delta_{s(x)}$ are two independent
$\Aut(A_h)$-submodules of $\der(A_h)$.
It allows us to compute the isotropy group of a sum of two derivations as the intersection of the isotropy groups of each term in the sum, providing a structural framework that will be used throughout the paper.

Section \ref{section5} is dedicated to the singular case, that is, when $\gcd(h,h')\neq 1$.
In this setting, Nowicki’s decomposition \cite[Thm. 11.2]{Nowicki} shows that a derivation of $A_h$ may contain, in addition to the inner and the differential parts, an additional special component $E_H$, which substantially changes the isotropy problem.
In contrast to the case treated previously, the derivations of the type $E_H$ are not stable under conjugation: indeed, it may produce simultaneously inner and differential terms.
To handle this phenomenon, we pass to a suitable Ore localization and combine the inner and special parts into the localized element $w^\ast=w+\psi^{-1}H$. This leads to a general criterion for the isotropy of derivations of the form $D=ad_w+E_H+\Delta_{s(x)}$.
In the end, we provide explicit results that illustrate the new phenomena that arise in the singular case.

\section{Preliminaries}\label{section2}

For a nonzero polynomial $h(x) = \sum_{i=0}^N a_i x^i \in \Bbbk[x]$, the \emph{degree of $h(x)$} will be denoted as $\deg(h(x))=N \geq 0$.
It is also convenient to define the degree of the zero polynomial as $-\infty$.

Let $d=h(x) \, \partial_x$ be a derivation in $\Bbbk[x]$ with $\deg(h(x)) \geq 0$.
We denote by $A_h$ the classical differential Ore extension $\Bbbk[x] [t;d]$: the $\Bbbk$-algebra generated by $x$ and $t$ subject to the relation $tx = xt+h(x)$.
In particular, if $\deg(h(x))=0$, then $A_h \simeq A_1$, where $A_1$ denotes the (first) Weyl algebra. For each integer $n\geq 1$, we denote by $\mathbb{G}_n=\{a\in\Bbbk \mid a^n=1\}$ the group of all $n$-th roots of unity in $\Bbbk$. For convenience, we also set $\mathbb{G}_0=\Bbbk^\ast.$

\subsection{The differential algebra $A_h$}
The study of automorphisms of the algebra $A_h$ followed an approach analogous to that adopted in some classical algebras: $\Bbbk[x,y]$ (see \cite{Re1968}), $A_1$ (see \cite{Dixmier}) and $\Bbbk\langle x, y \rangle$ (see \cite{SC20}); in which the classification of locally nilpotent derivations plays a central role. Indeed, as in the famous Rentschler's Theorem \cite{Re1968}, there is an explicitation of locally nilpotent derivations in $A_h$ with $h\in \Bbbk[x]\setminus\Bbbk$.
For example, in both Kaygorodov, Lopes and Mashurov's work \cite{ISF2021}, as well as Benkart, Lopes and Ondrus' paper \cite{BLO2015}, they exhibit this family explicitly and so, as a consequence, a characterization of the automorphisms of this algebra is obtained.

Note that given $h \in \Bbbk[x]\setminus\Bbbk$, there are derivations in $A_h$ that are analogous to the locally nilpotent derivations $v(x)\partial_y$ of $k[x,y]$ and $ad_{v(x)}$ of the (first) Weyl algebra $A_1$.
Indeed, let $g(x) \in \Bbbk[x]$ and consider the derivation $D_{g(x)}$ of $A_h$ given by $D_{g(x)}(x)=0$ and $D_{g(x)}(t)=g(x)$.
We denote by $\LND(A_h)$ the set of all locally nilpotent derivations in $A_h$. 
It is easy to see that $D_{g(x)} \in \LND(A_h)$. 
Furthermore, locally nilpotent derivations play an important role to obtain the automorphisms of an algebra.
Indeed, for a $\partial \in \LND(A_h)$, there exists a well-defined map exp$(\partial): A_h \to A_h$, with exp$(\partial)(a)=\sum_{k\geq 0} \frac{\partial^k(a)}{k!}$, which is an automorphism of $A_h$.
Moreover, the set of these automorphisms generates a normal subgroup of all automorphisms of the algebra. 
For the algebra $A_h$, both $\LND(A_h)$ and $\Aut(A_h)$ are determined in the following results:

\begin{proposition}\cite[Prop. 2]{ISF2021}\label{prop_localnilp}
Let $h \in \Bbbk[x] \setminus \Bbbk$.
Then  $LND(A_h)=\{D_{g(x)} \mid g(x) \in \Bbbk[x]\}$.
\end{proposition}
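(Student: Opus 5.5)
We must show that every locally nilpotent derivation $\partial$ of $A_h$ has the form $D_{g(x)}$. The inclusion $\supseteq$ is already noted in the text: $D_{g(x)}$ kills $x$ and sends $t$ to $g(x)$, which is then killed. So the plan is to prove the reverse inclusion in three steps: show $\partial(x)=0$, show $\partial(t)\in\Bbbk[x]$, and conclude.

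We use the standard degree filtration in $t$. Every element of $A_h$ is uniquely $\sum_i a_i(x)t^i$, and $A_h$ is a domain. The associated graded algebra for the $t$-filtration is the commutative polynomial ring $\Bbbk[x,t]$, since $\deg(h)\ge 1$ means $h$ lies in degree $0$. We use three standard facts about a nonzero $\partial\in\LND(A_h)$ on a domain.
\begin{itemize}
\item The kernel $\ker\partial$ is factorially closed: if $ab\in\ker\partial$ with $ab\neq 0$, then $a,b\in\ker\partial$.
\item The degree function $\nu(a)=\max\{n:\partial^n(a)\neq0\}$ satisfies $\nu(ab)=\nu(a)+\nu(b)$.
\item There is a local slice $r$ with $\partial(r)\neq0$ and $\partial^2(r)=0$.
\end{itemize}

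\emph{Step 1: show $\partial(x)=0$.} Apply $\partial$ to the relation $tx-xt=h(x)$. This gives
$$[\partial(t),x]+[t,\partial(x)]=h'(x)\partial(x).$$
We compare $\nu$-degrees together with $t$-degrees. First note that $[t,a]=d(a)$ for $a\in\Bbbk[x]$, and that bracketing with $x$ lowers $t$-degree by one while multiplying the leading coefficient by $-ih$. Suppose $\partial(x)\neq0$. Write $\partial(x)=\sum_{i\le m}a_it^i$ and $\partial(t)=\sum_{j\le n}b_jt^j$, and compare leading terms in $t$ in the displayed identity.

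A cleaner route is to use that $h$ is a nonunit. The element $h$ is normal, with $th=ht+hh'$, so $hA_h=A_hh$. By the Leibniz rule, $\nu(h(x))=\deg(h)\,\nu(x)$. If $\nu(x)\ge1$, then $[t,x]=h$ forces $\nu(h)\le\nu(t)+\nu(x)$. Using $\nu(t)+\nu(x)=\nu(tx)$ together with symmetric arguments, we would compare these degrees. I expect this to be the main obstacle.

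The alternative I would try first is the argument from the literature. One shows that $\ker\partial$ is a commutative subalgebra of transcendence degree $1$, which is a GK-dimension argument: $\mathrm{GKdim}A_h=2$, and the kernel of a nonzero LND drops the dimension by one. Next, one shows $\ker\partial$ is a polynomial ring $\Bbbk[f]$. The centralizer of $x$ in $A_h$ is $\Bbbk[x]$, since $\deg h\ge1$ and $h$ is not a zero divisor. One then shows that $f$ must lie in $\Bbbk[x]$. Otherwise $f$ has positive $t$-degree, and factorial closedness applied to $h$-multiples, using normality of $h$, yields a contradiction: $\partial(h)$ and $\partial$ of the normalizing relation force $\partial$ to preserve the ideal $hA_h$, and on $A_h/hA_h\cong(\Bbbk[x]/(h))[t]$ this produces incompatibility. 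Factorial closedness then gives $x\in\ker\partial$.

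\emph{Step 2: show $\partial(t)\in\Bbbk[x]$.} Once $\partial(x)=0$, the relation gives $[\partial(t),x]=0$. So $\partial(t)$ lies in the centralizer of $x$, which is $\Bbbk[x]$: indeed $[\sum a_it^i,x]$ has leading term $ia_iht^{i-1}\neq0$ for $i\ge1$. Hence $\partial(t)=g(x)$.

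\emph{Step 3: conclude.} A derivation is determined by its values on the generators, so $\partial=D_{g(x)}$.
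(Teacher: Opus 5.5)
\textbf{Gap in Step~1.} The paper gives no proof of this proposition; it only cites Kaygorodov--Lopes--Mashurov. So your argument has to stand on its own.

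Your Steps~2 and~3 are fine. Once $\partial(x)=0$, applying $\partial$ to $tx-xt=h(x)$ gives $[\partial(t),x]=0$. The leading-term computation of Lemma~\ref{lematix} shows the centralizer of $x$ is $\Bbbk[x]$, so $\partial=D_{g(x)}$.

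The gap is Step~1, which is the whole content of the statement. None of your three routes proves $\partial(x)=0$:
\begin{enumerate}
\item The leading-term comparison is abandoned.
\item The $\nu$-degree route stops at $\deg(h)\,\nu(x)\le\nu(t)+\nu(x)$. This is not a contradiction, as you acknowledge.
\item The ``literature'' route rests on unproved claims: that $\ker\partial$ is commutative of transcendence degree one and equal to some $\Bbbk[f]$. Even granting these, the decisive step fails. You never explain why $\partial(hA_h)\subseteq hA_h$: normality of $h$ says nothing about $\partial(h)$, and this inclusion is nearly as strong as the conclusion. The promised ``incompatibility'' in $(\Bbbk[x]/(h))[t]$ is never specified.
\end{enumerate}

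\textbf{One way to close the gap.}
\begin{enumerate}
\item For every $s\in\Bbbk$, $\exp(s\partial)$ is an automorphism. By Theorem~\ref{cod_aut}, $\exp(s\partial)(x)=\sum_{k\ge 0}\frac{s^k}{k!}\partial^k(x)\in\Bbbk x+\Bbbk$ for all $s$.
\item The sum is finite and $\Bbbk$ is infinite, so a Vandermonde argument gives $\partial^k(x)\in\Bbbk x+\Bbbk$ for every $k$. Hence $\partial(x)=\alpha x+\beta$.
\item Local nilpotency forces $\alpha=0$; otherwise $\partial^k(x)=\alpha^{k-1}(\alpha x+\beta)\neq 0$ for all $k$.
\item Apply $\partial$ to $tx-xt=h$. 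Since $\beta$ is central, $[\partial(t),x]=\beta h'(x)$.
\item From $th=h(t+h')$, the set $hA_h=A_hh$ is a two-sided ideal, and $t$ and $x$ commute modulo it. So every commutator lies in $hA_h$, and the elements of $hA_h$ of $t$-degree $0$ have the form $h\,g(x)$.
\item Since $0\neq h'$ has degree $<\deg h$, the equation $\beta h'=h\,g$ forces $\beta=0$.
\end{enumerate}
Your Steps~2--3 then finish the proof.

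If you prefer not to cite Theorem~\ref{cod_aut}, you can get $\exp(s\partial)(\Bbbk[x])\subseteq\Bbbk[x]$ directly:
\begin{itemize}
\item If $u$ is normal, write $xu=ux'$. Comparing $t$-leading terms gives $x'=x$, so $u$ lies in the centralizer $\Bbbk[x]$.
\item For any root $\lambda$ of $h$, the element $x-\lambda$ is normal, and automorphisms preserve normality.
\end{itemize}
Then $\partial$ restricts to a locally nilpotent derivation of $\Bbbk[x]$, so $\partial(x)\in\Bbbk$, and the commutator argument above gives $\partial(x)=0$.
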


\begin{theorem} \label{cod_aut} \cite[Thm. 8.3]{BLO2015} Consider $A_h$, where $\deg(h(x))=N \geq 1$, and $\mathbb{P}=\{ (a,b)\in \Bbbk^*\times \Bbbk \ | \ h(ax+b)=a^Nh(x)\}$.
Then, $\Aut (A_h)=\Bbbk[x]\rtimes \tau_{\mathbb{P}},$ where $ \tau_{\mathbb{P}}:=\{\tau_{a,b} \ | \ (a,b)\in\mathbb{P}\}$.
If $\rho\in \Aut(A_h)$, then there exist $(a,b)\in \mathbb{P}$ and $r(x)\in \Bbbk[x]$ such that $\rho=\sigma_{r(x)}\circ\tau_{a,b}$, where
    $\sigma_{r(x)}(x)=x$, $\sigma_{r(x)}(t)=t+r(x)$, $\tau_{a,b}(x) = ax+b$ and $\tau_{a,b}(t) = a^{N-1}t$.
\end{theorem}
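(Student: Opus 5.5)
I would prove Theorem \ref{cod_aut} in two stages. First I check that the listed maps are automorphisms and that they generate a semidirect product. Then I show that every automorphism has the stated form, using Proposition \ref{prop_localnilp}.

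\emph{Verification.} For $\sigma_{r(x)}$, the relation is preserved because $(t+r)x-x(t+r)=tx-xt=h$. Its inverse is $\sigma_{-r}$, and $\sigma_r\sigma_{r'}=\sigma_{r+r'}$, so $r\mapsto\sigma_r$ embeds $(\Bbbk[x],+)$. Note that $\sigma_r=\exp(D_r)$. For $\tau_{a,b}$ with $(a,b)\in\mathbb P$ we compute
\[
a^{N-1}t(ax+b)-(ax+b)a^{N-1}t=a^Nh(x)=h(ax+b)=h(\tau_{a,b}(x)).
\]
Hence $\tau_{a,b}$ is an endomorphism. It is invertible because $\mathbb P$ is a group under composition of affine maps: if $h(ax+b)=a^Nh(x)$, then the inverse affine map also satisfies the condition. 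Next, $\tau\sigma_r\tau^{-1}=\sigma_{a^{N-1}r(\tau^{-1}x)}$, so $\{\sigma_r\}$ is normal. It also meets $\tau_{\mathbb P}$ trivially, since the $\tau$'s act on $t$ by scalars. This gives the semidirect product structure on the subgroup they generate.

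\emph{Surjectivity.} Let $\rho\in\Aut(A_h)$. Conjugation preserves local nilpotency, so $\rho D_1\rho^{-1}\in\LND(A_h)$, and by Proposition \ref{prop_localnilp} it equals $D_g$ for some $g$. The kernel of $D_1$ is $\Bbbk[x]$: using the PBW basis $\{x^it^j\}$, $D_1$ lowers the $t$-degree and acts like $\partial_t$ on leading terms. Likewise $\ker D_g=\Bbbk[x]$ for $g\neq0$. Thus $\rho(\Bbbk[x])=\Bbbk[x]$, and since $\rho$ restricts to an automorphism of $\Bbbk[x]$, we get $\rho(x)=ax+b$ with $a\ne0$. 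Writing $\rho(t)=\sum_j p_j(x)t^j$ and applying $\rho$ to the relation gives $[\rho(t),ax+b]=h(ax+b)$. The commutator $[t^j,x]$ has $t$-degree $j-1$ with leading coefficient $jh$, so the relation forces $\rho(t)$ to have $t$-degree at most $1$. Degree $0$ is impossible, because then the commutator vanishes while $h(ax+b)\neq0$.

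So $\rho(t)=p_1(x)t+p_0(x)$, and the relation becomes $a\,p_1(x)h(x)=h(ax+b)$. Comparing degrees shows $p_1=c\in\Bbbk^*$. Surjectivity of $\rho$ forces $c\neq0$, and then $ach(x)=h(ax+b)$. Comparing leading coefficients gives $ac=a^N$, so $c=a^{N-1}$ and $(a,b)\in\mathbb P$. Therefore $\rho=\sigma_{p_0}\circ\tau_{a,b}$, where $\tau_{a,b}(t)=a^{N-1}t$ and $\sigma_{p_0}$ adds $p_0(x)$ after applying $\tau$, which is consistent with the composition order.

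\emph{Main obstacle.} The delicate point is the kernel claim $\rho(\Bbbk[x])=\Bbbk[x]$ via Proposition \ref{prop_localnilp}. I would argue $\rho(\ker D_1)=\ker(\rho D_1\rho^{-1})$ and compute $\ker D_g=\Bbbk[x]$ directly via the $t$-degree filtration, using $\deg h\ge1$ only through the proposition. The rest is the routine leading-term bookkeeping sketched above.
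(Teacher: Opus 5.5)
Your argument is correct and takes the route the paper itself points to. The paper gives no proof of this theorem, quoting it from \cite{BLO2015}, but it presents the description of $\Aut(A_h)$ as a consequence of the classification of locally nilpotent derivations in Proposition~\ref{prop_localnilp}. That is exactly how you get $\rho(\Bbbk[x])=\Bbbk[x]$, via $\rho(\ker D_1)=\ker(\rho D_1\rho^{-1})$ and $\ker D_g=\Bbbk[x]$.

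Two composition-order slips should be fixed; neither affects the conclusion.

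\textbf{Conjugation formula.} The correct identity is $\tau_{a,b}\,\sigma_r\,\tau_{a,b}^{-1}=\sigma_{a^{1-N}r(ax+b)}$, which matches the paper's $\tau_a\circ\sigma_{r(x)}=\sigma_{a^{1-N}r(ax)}\circ\tau_a$. The formula you wrote, $\sigma_{a^{N-1}r(\tau^{-1}x)}$, is that of $\tau_{a,b}^{-1}\sigma_r\tau_{a,b}$. Normality still follows, since $\tau_{a,b}^{-1}\in\tau_{\mathbb P}$.

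\textbf{Final decomposition.} The map $\sigma_{p_0}$ is applied to $\tau_{a,b}(t)=a^{N-1}t$ and is linear, so $(\sigma_{p_0}\circ\tau_{a,b})(t)=a^{N-1}t+a^{N-1}p_0(x)$. This differs from $\rho(t)=a^{N-1}t+p_0(x)$ unless $a^{N-1}=1$. Your justification that ``$\sigma_{p_0}$ adds $p_0(x)$ after applying $\tau$'' is precisely what fails. The correct statement is $\rho=\sigma_{a^{1-N}p_0}\circ\tau_{a,b}$, which is still of the required form $\sigma_{r(x)}\circ\tau_{a,b}$ with $r\in\Bbbk[x]$.
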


\medbreak

The following result establishes necessary and sufficient conditions for an isomorphism between algebras of this work:

\begin{theorem}\label{teo_isoA_h}\cite[Thm. 8.2]{BLO2015}
Let $g,h \in \Bbbk[x]$.
$A_h$ is isomorphic to $A_g$ if and only if there exist $\alpha, \beta, \gamma \in \Bbbk$, with $\alpha \gamma \neq 0$ such that $\gamma g(x) = h(\alpha x + \beta)$.
    In particular, if $A_h$ is isomorphic to $A_g$, then $g$ and $h$ have the same degree.
 \end{theorem}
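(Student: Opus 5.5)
Since both directions of the equivalence are involved, I would treat them separately and argue directly from the presentations by generators and relations.

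For sufficiency, assume $\gamma g(x)=h(\alpha x+\beta)$ with $\alpha\gamma\neq 0$, and let $x,t$ denote the generators of $A_g$. Define $\phi$ on generators by $\phi(x)=\alpha x+\beta$ and $\phi(t)=c\,t$, with $c\in\Bbbk^*$ to be fixed. It extends to an algebra map $A_h\to A_g$ provided it respects $tx-xt=h(x)$. We compute
\[
[\phi(t),\phi(x)]=c\alpha\,[t,x]=c\alpha\, g(x),
\]
and we need this to equal $h(\alpha x+\beta)=\gamma g(x)$, which forces $c=\gamma/\alpha$. The inverse map $x\mapsto \alpha^{-1}(x-\beta)$, $t\mapsto c^{-1}t$ satisfies the symmetric relation, so $\phi$ is an isomorphism.

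For necessity, let $\phi:A_h\to A_g$ be an isomorphism. The plan is to transport the intrinsic structure. The degenerate case $\deg h=0$ is separated first, because there $A_h\cong A_1$ is simple. For $\deg h\ge 1$ the ideal generated by $h$ is proper, so $A_h$ is not simple, and hence $\deg g=0$ iff $\deg h=0$. In the degree-zero case any nonzero constants $g,h$ work with $\alpha=1$, $\beta=0$.

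Assume now $\deg h,\deg g\ge 1$.
\begin{enumerate}
\item Conjugation by $\phi$ carries $\LND(A_h)$ to $\LND(A_g)$. By Proposition \ref{prop_localnilp}, the common kernel of all locally nilpotent derivations of $A_h$ is exactly $\Bbbk[x]$, since $D_{1}$ already has kernel $\Bbbk[x]$. Hence $\phi(\Bbbk[x])=\Bbbk[x]$.
\item Restricting $\phi$ to $\Bbbk[x]$ gives an automorphism of $\Bbbk[x]$, so $\phi(x)=\alpha x+\beta$ with $\alpha\ne0$.
\item Using the grading by $t$-degree, which is a filtration with $\Bbbk[x]$ in degree $0$, $\phi$ must preserve $t$-degree, since it preserves degree $0$ and is bijective. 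Hence $\phi(t)=c(x)t+e(x)$, and invertibility forces $c(x)=c\in\Bbbk^*$.
\item Comparing commutators gives
\[
h(\alpha x+\beta)=\phi(h(x))=[\phi(t),\phi(x)]=c\alpha\,g(x),
\]
so $\gamma=c\alpha$ works. Equal degrees follow immediately.
\end{enumerate}

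The main obstacle is step 3, showing the $t$-degree is preserved. One route: the set of elements $u$ with $[u,\Bbbk[x]]\subseteq\Bbbk[x]$ is intrinsic once $\Bbbk[x]$ is. It equals $\Bbbk[x]t+\Bbbk[x]$, because $[t^n,x]$ has $t$-degree $n-1$ with leading coefficient $nh\neq 0$. Hence $\phi$ preserves this set, and surjectivity then forces the coefficient $c(x)$ to be a unit.
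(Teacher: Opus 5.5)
The paper does not prove this statement; it quotes it from \cite[Thm.~8.2]{BLO2015}. So there is no in-paper argument to compare with, and your proof has to stand on its own. It does.

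Sufficiency is a correct direct check: take $c=\gamma/\alpha$, and the inverse works because $\gamma g(\alpha^{-1}(x-\beta))=h(x)$. The simplicity versus non-simplicity split correctly handles the degree-zero case. The chain $\phi(\Bbbk[x])=\Bbbk[x]\Rightarrow\phi(x)=\alpha x+\beta\Rightarrow\phi(t)=ct+e(x)\Rightarrow h(\alpha x+\beta)=c\alpha\,g(x)$ is sound. One weak spot: your first justification in step 3 (``preserves degree $0$ and is bijective'') is not an argument on its own. What carries the step is either your fallback $\{u:[u,\Bbbk[x]]\subseteq\Bbbk[x]\}=\Bbbk[x]t+\Bbbk[x]$, which is rigorous via Lemma \ref{lematix}, or the multiplicativity of $\deg_t$ in the domain $A_g$. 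The latter gives $\deg_t\phi(u)=\deg_t(u)\deg_t\phi(t)$, which forces $\deg_t\phi(t)=1$ by surjectivity.

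The one heavy ingredient is Proposition \ref{prop_localnilp} from \cite{ISF2021}. If you want the proof independent of that classification, step 1 has an elementary replacement.
\begin{itemize}
\item Since $th=h(t+h')$, $h$ is normal in $A_h$.
\item Every nonzero normal $u\in A_g$ lies in $\Bbbk[x]$. Indeed, $xu=uv$ for some $v\in A_g$ forces $\deg_t v=0$; comparing leading coefficients then gives $v=x$; and $[u,x]=0$ forces $\deg_t u=0$ by Lemma \ref{lematix}.
\item Hence $h(\phi(x))=\phi(h)\in\Bbbk[x]$. Since $\deg_t h(\phi(x))=\deg(h)\deg_t\phi(x)$, you get $\phi(x)\in\Bbbk[x]$.
\item Applying the same to $\phi^{-1}$ yields $\phi(\Bbbk[x])=\Bbbk[x]$.
\end{itemize}

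Your argument also gives more than the statement. With $g=h$, comparing leading coefficients in $h(\alpha x+\beta)=c\alpha h(x)$ gives $c=\alpha^{N-1}$, so $\phi=\sigma_{\alpha^{1-N}e(x)}\circ\tau_{\alpha,\beta}$ with $(\alpha,\beta)\in\mathbb{P}$. This recovers Theorem \ref{cod_aut}, which the paper likewise only cites.
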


The following result, used in subsequent chapters, explains the group of automorphisms according to the characteristics of $h(x)$, and so the algebra $A_h$.

\begin{theorem}\cite[Thm. 8.6]{BLO2015}\label{8_6_Samuel}
Assume that $h$ has $k$ distinct roots in $\Bbbk$.
\begin{itemize}
    \item[i)] If $k=1$ (and so $\lambda \in \Bbbk$ is the unique root of $h$), then $\mathbb{P} = \{(\alpha, (1-\alpha) \lambda) \ | \ \alpha \in \Bbbk \}, \tau_{\mathbb{P}} \simeq \Bbbk^\ast$, and $\Aut(A_h) = \Bbbk[x] \rtimes \Bbbk^\ast$
    \item[ii)] If $k > 1$, then $\tau_{\mathbb{P}} = \langle \tau_{\alpha, \beta} \rangle$, where $\alpha^k=1$ or $\alpha^{k-1}=1$, and $\Aut(A_h) = \Bbbk[x] \rtimes \langle \tau_{\alpha, \beta} \rangle \simeq \Bbbk[x] \rtimes  \mathbb{G}_\ell$, for some $\ell$ such that $\ell$ divides $k$ or $k-1$.
\end{itemize}
\end{theorem}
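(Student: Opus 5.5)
The plan is to reduce to describing the set $\mathbb{P}=\{(\alpha,\beta)\in\Bbbk^*\times\Bbbk \mid h(\alpha x+\beta)=\alpha^N h(x)\}$ from Theorem \ref{cod_aut}, since $\Aut(A_h)=\Bbbk[x]\rtimes\tau_{\mathbb{P}}$ is already known. Write $Z\subset\Bbbk$ for the set of $k$ distinct roots of $h$. The first observation is that $(\alpha,\beta)\in\mathbb{P}$ forces the affine map $\phi(z)=\alpha z+\beta$ to permute $Z$ while preserving root multiplicities: $z$ is a root of $h(\alpha x+\beta)$ iff $\phi(z)\in Z$, and equality with $\alpha^N h$ gives the matching of multiplicities. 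Conversely, since $h$ is determined up to its leading coefficient by its roots with multiplicities, and both sides have leading coefficient $\alpha^N\cdot\mathrm{lc}(h)$, any affine $\phi$ permuting $Z$ with multiplicities gives an element of $\mathbb{P}$. So $\tau_{\mathbb{P}}$ is isomorphic, via $\tau_{\alpha,\beta}\mapsto\phi$ (checking composition order: $\tau_{\alpha,\beta}\tau_{\gamma,\delta}$ corresponds to composing the affine maps), to the group $G$ of affine maps preserving the multiset of roots.

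For case i), $Z=\{\lambda\}$ and the condition is just $\phi(\lambda)=\lambda$, i.e. $\beta=(1-\alpha)\lambda$ with $\alpha\in\Bbbk^*$ arbitrary; the map $\alpha\mapsto\tau_{\alpha,(1-\alpha)\lambda}$ is then an isomorphism $\Bbbk^*\to\tau_{\mathbb{P}}$, since composing $x\mapsto\lambda+\alpha(x-\lambda)$ multiplies the $\alpha$'s.

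For case ii), $k>1$, the map $G\to\Bbbk^*$, $\phi\mapsto\alpha$, is injective: if two elements share $\alpha$, their quotient is a translation $z\mapsto z+c$ preserving the finite set $Z$, and summing over $Z$ gives $kc=0$, so $c=0$ (characteristic zero). Hence $G$ embeds in $\Bbbk^*$, and since $G$ acts faithfully on the finite set $Z$ (an affine map fixing two points is the identity), $G$ is finite, hence cyclic, $\simeq\mathbb{G}_\ell$, generated by one $\tau_{\alpha,\beta}$. To get $\ell\mid k$ or $\ell\mid k-1$, I would note that $G$ fixes the barycenter $c_0=\frac1k\sum_{z\in Z}z$, so each nonidentity $\phi$ is a rotation $z\mapsto c_0+\alpha(z-c_0)$ with $\alpha$ of order dividing $\ell$. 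Then the cyclic group of order $\ell$ acts on $Z$ with every nontrivial element fixing only $c_0$. Thus the orbits in $Z\setminus\{c_0\}$ all have size exactly $\ell$, giving $\ell\mid k$ if $c_0\notin Z$ and $\ell\mid k-1$ if $c_0\in Z$. Correspondingly the generator satisfies $\alpha^k=1$ or $\alpha^{k-1}=1$.

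The main obstacle is the bookkeeping in the correspondence between $\mathbb{P}$ and root-preserving affine maps, in particular making sure multiplicities and the leading coefficient match so that the converse direction holds. The free-orbit counting argument is the key conceptual step, but it is short once the barycenter is identified as the unique fixed point.
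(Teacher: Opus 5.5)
Your proof is correct and complete. The identification of $\mathbb{P}$ with the affine maps preserving the roots of $h$ with multiplicities uses that $\Bbbk$ is algebraically closed, and it holds. So do the other steps: the sum-over-$Z$ argument giving injectivity of $\tau_{\alpha,\beta}\mapsto\alpha$ (characteristic zero), finiteness from faithfulness on $Z$ when $k\ge 2$, and the free action on $Z\setminus\{c_0\}$. In (i) you also rightly take $\alpha\in\Bbbk^\ast$ rather than $\alpha\in\Bbbk$ as printed.

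There is one cosmetic slip. $\tau_{\alpha,\beta}\circ\tau_{\gamma,\delta}$ sends $x$ to $\gamma(\alpha x+\beta)+\delta$, so it corresponds to $\phi_{\gamma,\delta}\circ\phi_{\alpha,\beta}$, and your correspondence is an anti-isomorphism. This is harmless, since $\tau_{\alpha,\beta}\mapsto\alpha$ is an honest homomorphism on $\tau_{\mathbb{P}}$ and the group is abelian in both cases.

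The paper does not prove this theorem; it is quoted from \cite{BLO2015}. The nearest internal argument is Section~\ref{section3}, which works with coefficients rather than roots. Replacing $h$ by $h^\ast(x)=\frac{1}{c_N}h\bigl(x-\frac{c_{N-1}}{Nc_N}\bigr)$ moves the multiplicity-weighted centroid of the roots to $0$. Lemma~\ref{lemma_caso_bzeroVeloso} and its companion cases then force $b=0$. The coefficient comparison of Lemma~\ref{lemma_raizdaunidade} yields $\tau_{\mathbb{P}}\cong\bigcap_{i\in\mathcal{S}_{h^\ast}}\mathbb{G}_{N-i}$, which for $k>1$ is cyclic of order $\gcd(N-i : i\in\mathcal{S}_{h^\ast}\setminus\{N\})$.

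Your fixed barycenter $c_0$ is the geometric form of that normalization. Whenever $\tau_{\mathbb{P}}$ is nontrivial, the weighted and unweighted centroids coincide, since both are the unique fixed point of any non-identity element.

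The two routes buy different things. Your orbit count gives the divisibility $\ell\mid k$ or $\ell\mid k-1$ directly. That divisibility is not visible from the exponent-gcd formula without essentially redoing your argument for $h^\ast=x^m g(x^\ell)$. The coefficient route instead determines $\ell$ exactly from the support of the normalized polynomial, which the root-counting argument only bounds.
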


We also emphasize the following result by Nowicki, which characterizes the derivations in $A_h$ when $h(x)$ is square-free, that is, $gcd(h,h')=1$.

\begin{theorem}\label{nowicki_mdc1}\cite[Thm. 10.1]{Nowicki}
Consider the algebra $A_h$ with $\deg(h)\geq 1$. Assume that $gcd(h,h')=1$. Then every derivation $D$ of $A_h$ has a unique decomposition
\begin{equation}\label{eq_Dmdc1}
    D=ad_w+\Delta_{s(x)},
\end{equation}
where $ad_w$ is an inner derivation of $A_h$, and  $\Delta_{s(x)}$ is the derivation of $A_h$ given by $\Delta_{s(x)} (x)=0$ and $\Delta_{s(x)} (t)=s(x)$, for $s(x)\in \Bbbk[x]$ and $\deg(s)<\deg(h)$.
Moreover, $D$ is inner if and only if $s(x)=0$.
\end{theorem}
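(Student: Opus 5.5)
Theorem \ref{nowicki_mdc1} is quoted from \cite[Thm. 10.1]{Nowicki}, so the plan below reconstructs its proof directly rather than deducing it from the results above. The idea is to reduce an arbitrary derivation $D$ to one that kills $x$, and then read off the remaining freedom as a polynomial in $x$ taken modulo $h$.

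\textbf{Step 1: reduce to $D(x)=0$.} Write $D(x)=\sum_i p_i(x)t^i$ and $D(t)=\sum_j q_j(x)t^j$. Apply $D$ to the relation $tx-xt=h$, which gives
$$[D(t),x]+[t,D(x)]=h'(x)D(x).$$
I will then compare leading $t$-degrees. This uses $[t,f(x)]=hf'$ and $[t^i,x]=i\,h\,t^{i-1}+\dots$.

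From this comparison I expect two facts. First, $D(x)$ is divisible by $h$ in a suitable sense. Second, for a suitable $w\in A_h$ (built degree by degree in $t$), $D-ad_w$ sends $x$ to $0$.

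Concretely, $ad_t(x)=h$ and $ad_{f(x)t^{i}}(x)=-f[t^i,x]$. So the inner derivations produce, on $x$, the elements $h\cdot(\text{stuff})$ at top degree. This suggests a descending induction on the $t$-degree of $D(x)$.

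I expect Step 1 to be the main obstacle. One has to show that the top coefficient of $D(x)$ is a multiple of $h$. The plan is to get this from the relation above together with $\gcd(h,h')=1$: the coefficient comparison yields an identity of the form $h\,(\dots)=h'\,p_n$ (or similar), and coprimality then forces $h\mid p_n$. Degree bookkeeping in the commutators is where care is needed.

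\textbf{Step 2: analyse a derivation with $D(x)=0$.} Now suppose $D(x)=0$. Then $[D(t),x]=0$, and since the centralizer of $x$ is $\Bbbk[x]$, we get $D(t)=s(x)$ for some $s\in\Bbbk[x]$.

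\textbf{Step 3: absorb the part of $s$ divisible by $h$.} Inner derivations $ad_{g(x)}$ satisfy $ad_{g}(x)=0$ and $ad_g(t)=-hg'$. Divide $s=hq+r$ with $\deg r<\deg h$. The aim is to write $hq=-hg'$, i.e. $g'=-q$, which can be solved in characteristic zero. Subtracting $ad_g$ then leaves $\Delta_{r}$ with $\deg r<\deg h$.

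\textbf{Step 4: uniqueness and the inner criterion.} It suffices to show that $\Delta_r$ inner with $\deg r<\deg h$ forces $r=0$. Suppose $\Delta_r=ad_w$. Then $[w,x]=0$, so $w\in\Bbbk[x]$, and $ad_w(t)=-hw'$ is a multiple of $h$. Hence $r$ is divisible by $h$, and the degree bound gives $r=0$.

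Uniqueness follows: if $ad_w+\Delta_s=ad_{w'}+\Delta_{s'}$, then $\Delta_{s-s'}$ is inner, so $s=s'$ and therefore $ad_w=ad_{w'}$. The last sentence of the theorem ($D$ inner if and only if $s=0$) follows in the same way.
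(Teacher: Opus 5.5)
Your argument is correct. Note, however, that the paper gives no proof of this theorem: it imports it from Nowicki.

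The only piece the paper reproves is the independence $\Inn(A_h)\cap\Delta(A_h)=0$ in Lemma~\ref{lemadelta}, and that is exactly your Step~4: $[w,x]=0$ forces $w\in\Bbbk[x]$, then $s=-hw'$, and the degree bound gives $s=0$. Your Steps~1--3 thus supply, self-contained, the existence half that the paper only cites.

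Those steps also pinpoint where $\gcd(h,h')=1$ enters. Compare the $t^n$-coefficients in $D(tx-xt)=D(h)$, where $n=\deg_t D(x)$:
\begin{itemize}
\item By Lemma~\ref{lematix} and $[t,p(x)]=hp'$, every coefficient of $[D(t),x]+[t,D(x)]$ lies in $h\Bbbk[x]$.
\item The $t^n$-coefficient of $D(h)$ is $h'p_n$.
\end{itemize}
Hence $h\mid h'p_n$, and so $h\mid p_n$. Writing $p_n=hg$ and subtracting $ad_{gt^{n+1}/(n+1)}$ lowers $\deg_t D(x)$, so your descending induction goes through.

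This is consistent with Section~\ref{section5}. For $h=x^2$ one has $E_t(x)=x\notin h\Bbbk[x]$, so Step~1 genuinely fails without coprimality.

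Two small slips to fix in a write-up:
\begin{itemize}
\item The identity $[D(t),x]+[t,D(x)]=h'(x)D(x)$ is not exact. In general $D(h)=\sum_k c_k\sum_j x^jD(x)x^{k-1-j}$, and $D(x)$ need not commute with $x$ (e.g.\ $D=ad_{t^2}$). It does hold for the top $t$-coefficient, which is all you use.
\item The sign is wrong: $ad_{f(x)t^i}(x)=f[t^i,x]$, not $-f[t^i,x]$, as your own formula $ad_t(x)=h$ shows. This only flips the sign of the element you subtract.
\end{itemize}
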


\subsection{Properties of $A_h$}

We consider some formulas for the product of elements in $A_h$.
The preliminary results developed here also hold in the case $\deg(h(x))=0$, although we will not use this case in the following sections.
For $f(x) \in \Bbbk [x]$, we denote by $f^\prime(x)=\left(f(x)\right)^\prime$ the usual derivative, that is, if $f(x)=\sum_{i=0}^n a_i x^i$, then $f^\prime(x) = \sum_{i=1}^n i a_i x^{i-1}$.

\begin{lemma}\label{lematx^n} In $A_h$, for any $n \geq 0$, it holds
\begin{equation}\label{lematx^n_eq}
tx^n=x^nt+(x^n)'h(x)=x^nt+nx^{n-1} h(x).
\end{equation}

\end{lemma}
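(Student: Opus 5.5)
Induction on $n$ is the natural route. The cases $n=0$ and $n=1$ are immediate. For $n=0$ we have $t\cdot 1 = t$ and $(x^0)'=0$. For $n=1$ the identity is exactly the defining relation $tx = xt + h(x)$ of $A_h$. The second equality in \eqref{lematx^n_eq} is just $(x^n)'=nx^{n-1}$, so only the first needs proof.

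Assume the formula holds for some $n\geq 1$ and write $tx^{n+1} = (tx^n)x$. The induction hypothesis gives
$$tx^{n+1} = x^n t x + n x^{n-1}h(x)x.$$
Applying the defining relation to the factor $tx$ gives
$$x^n t x = x^{n+1}t + x^n h(x).$$
Since $\Bbbk[x]$ is a commutative subalgebra of $A_h$, we have $x^{n-1}h(x)x = x^n h(x)$. Adding the two contributions yields $x^{n+1}t + (n+1)x^n h(x)$, which is the claim for $n+1$.

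There is no real obstacle. The only point needing care is that polynomials in $x$ commute among themselves in $A_h$. This holds because $A_h$ contains $\Bbbk[x]$ as a subalgebra and $t$ is the only non-commuting generator.

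One could also state the result in a more conceptual form: $tf - ft = d(f) = h f'$ for every $f\in\Bbbk[x]$. This holds because $[t,\cdot\,]$ is a derivation of $A_h$ that restricts to $d$ on $\Bbbk[x]$, and it gives the monomial case at once. The inductive computation above is the elementary version of the same fact.
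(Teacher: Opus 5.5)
Your proof is correct and matches the paper's argument: induction on $n$, writing $tx^{n+1}=(tx^n)x$, applying the hypothesis, then using $tx=xt+h(x)$ and the commutativity of $\Bbbk[x]$ to collect $(n+1)x^nh(x)$. Your closing remark, that $[t,-]$ restricts to $d=h(x)\partial_x$ on $\Bbbk[x]$, is also valid; it is the general identity $tf(x)=f(x)t+f'(x)h(x)$, which the paper records right after this lemma.
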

\begin{proof}
    We prove this by induction on $n$.
    For $n=0$ and $n=1$ is clear.
    Assume valid for $n\geq 0$, then
    \begin{eqnarray*}
    tx^{n+1}&=& tx^nx\\
    &=& (x^nt+nx^{n-1} h(x))x\\
    &=& x^ntx+nx^{n-1} h(x)x\\
    &=& x^n(xt+h(x)) + nx^nh(x)\\
    &=& x^{n+1}t + x^nh(x) + nx^nh(x)\\
    &=& x^{n+1}t + (n+1)x^{n}h(x).
       \end{eqnarray*}
       Therefore,  $tx^n=x^nt+nx^{n-1} h(x)$, for all $n\geq 0$.
\end{proof}

Thus, it is possible to compute the product of $t$ with any polynomial of $\Bbbk[x]$: 

\begin{equation}\label{comm_t_fx_eq}
t f(x) = f(x) t + f'(x)h(x).
\end{equation}

In general, the product of $t$ with any element of $A_h$ is more complicated.
But sometimes it is sufficient to know the behavior of this product with respect to degree in $t$.
Recall that for $g \in A_h, g \neq 0$, there is a unique decomposition $g = \sum_{i=0}^n g_i(x)t^i$, where $g_i \in \Bbbk[x]$ and $g_n\neq 0$.
In that case, we set $\deg_t(g):=n$.
Note that $\deg_t(g)=0$ if and only if $g \in \Bbbk[x]$.
It is also convenient to define $\deg_t(0)=-\infty$.

\begin{corollary}\label{comm_t_g}
    Let $g \in A_h$. Then $t g = g t + h(x)\tilde{g}$, where $\deg_t(\tilde{g}) \leq \deg_t(g)$.
    In particular, $\deg_t( t g ) = \deg_t (g) + 1$, and $\deg_t(\tilde{g})= \deg_t(g)$ if and only if $g_n(x) \in \Bbbk[x] \backslash \Bbbk$, where $g = \sum_{i=0}^n g_i(x) t^i$.
\end{corollary}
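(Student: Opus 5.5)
We write $g=\sum_{i=0}^n g_i(x)t^i$ and compute $tg$ term by term using \eqref{comm_t_fx_eq}. For each $i$ we have
\[
t\,g_i(x)t^i=\bigl(g_i(x)t+g_i'(x)h(x)\bigr)t^i=g_i(x)t^{i+1}+h(x)g_i'(x)t^i .
\]
Summing over $i$ and using $g_i(x)t^{i+1}=g_i(x)t^i\cdot t$ gives
\[
tg=gt+h(x)\tilde g,\qquad \tilde g:=\sum_{i=0}^n g_i'(x)t^i .
\]
Each coefficient $g_i'$ is a polynomial in $x$ and multiplies $t^i$ with $i\le n$, so $\deg_t(\tilde g)\le n=\deg_t(g)$. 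This is the first claim. Note that $h(x)\tilde g$ is already in normal form, since $h$ sits on the left of the $x$-coefficients.

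For $\deg_t(tg)=\deg_t(g)+1$, observe that $gt=\sum_i g_i(x)t^{i+1}$ is in normal form with leading term $g_n(x)t^{n+1}$, where $g_n\neq 0$. Since $A_h$ is an Ore extension it has the PBW basis $\{x^it^j\}$, so the normal form is unique. The term $h\tilde g$ has $t$-degree at most $n$ and cannot cancel $g_n t^{n+1}$. Hence $\deg_t(tg)=n+1$. The case $g=0$ is trivial with the convention $\deg_t(0)=-\infty$.

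For the final equivalence, the coefficient of $t^n$ in $\tilde g$ is $g_n'(x)$. Because $\operatorname{char}\Bbbk=0$, $g_n'\neq 0$ exactly when $g_n\notin\Bbbk$. So $\deg_t(\tilde g)=n$ iff $g_n\in\Bbbk[x]\setminus\Bbbk$. When $g_n$ is a constant, $\deg_t(\tilde g)<n$.

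There is no real obstacle here. The only points needing care are the uniqueness of the normal form, which makes $\deg_t$ well defined, and the use of characteristic zero to get that $g_n'=0$ forces $g_n$ constant.
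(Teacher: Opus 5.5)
Your proof is correct and is the same as the paper's: you expand $tg=\sum_i t\,g_i(x)t^i$ using $tf(x)=f(x)t+f'(x)h(x)$ and read off $\tilde g=\sum_i g_i'(x)t^i$. You also justify the two ``in particular'' claims, which the paper's proof leaves implicit: the leading term $g_n(x)t^{n+1}$ cannot cancel because the normal form is unique, and $g_n'\neq 0$ if and only if $g_n\notin\Bbbk$ because the characteristic is zero.
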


\begin{proof}
    Write $g = \sum_{i=0}^n g_i(x)t^i$.
    Then,
    \begin{eqnarray*}
        t g & = & t \left( \sum_{i=0}^n g_i(x)t^i \right) \\
        & = & \sum_{i=0}^n t g_i(x) t^i \\
        & \stackrel{\eqref{comm_t_fx_eq}}{=} & \sum_{i=0}^n (g_i(x) t + g_i'(x) h(x)) t^i \\
        & = & \sum_{i=0}^n g_i(x)t^{i+1} + \sum_{i=0}^n h(x)g_i'(x)t^i \\
        & = & \left( \sum_{i=0}^n g_i(x)t^i \right) t + h(x) \left( \sum_{i=0}^n g_i'(x)t^i \right) \\
        & = &  gt + h(x) \tilde{g},
    \end{eqnarray*}
    where $\tilde{g}= \sum_{i=0}^n g_i'(x)t^i$.
\end{proof}

\medbreak

Next, we also use the standard bracket notation:
let $R$ be an algebra; for $a,b \in R$, then $[a,b]:=ab-ba$.
In particular, the inner derivation $ad_a : R \to R$ is is defined as $ad_a(b)=ab-ba = [a,b]$.
Thus, sometimes we write $ad_a = [a, -] \in \der(R).$

\begin{lemma}\label{lematix}
For every $i\ge 1$, there exists $u_i\in A_h$ such that
\[
[t^i,x]=i\,h(x)\,t^{i-1}+h(x)u_i
\quad\text{and}\quad
\deg_t(u_i)<i-1.
\]
\end{lemma}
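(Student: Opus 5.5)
We argue by induction on $i$, and the engine of the induction is the Leibniz rule for the inner derivation $ad_{t^{i}}$.

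For $i=1$ the relation $tx=xt+h(x)$ gives $[t,x]=h(x)$. So we may take $u_1=0$, which has $\deg_t(u_1)=-\infty<0$.

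Now assume the statement holds for some $i\ge 1$. We write
\[
[t^{i+1},x]=t[t^i,x]+[t,x]t^i=t\bigl(i\,h\,t^{i-1}+h\,u_i\bigr)+h\,t^i .
\]
Next we move $t$ past the polynomial factor $h$ using \eqref{comm_t_fx_eq}, which gives $th=ht+h'h$. This yields
\[
t\,h\,t^{i-1}=h\,t^{i}+h\,h'\,t^{i-1},\qquad t\,h\,u_i=h\,t\,u_i+h\,h'\,u_i .
\]
Collecting terms, we get
\[
[t^{i+1},x]=(i+1)\,h\,t^{i}+h\,u_{i+1},\qquad u_{i+1}:=i\,h'\,t^{i-1}+t\,u_i+h'\,u_i .
\]
Every correction term carries $h$ as a left factor. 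This is why the product $t$ should be expanded with $h$ placed on the left.

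It remains to bound $\deg_t(u_{i+1})$, and this is the only step needing care. First, $\deg_t(h't^{i-1})\le i-1$. Second, $\deg_t(h'u_i)\le\deg_t(u_i)<i-1$. Third, by Corollary \ref{comm_t_g}, $\deg_t(t\,u_i)=\deg_t(u_i)+1<i$, i.e.\ $\le i-1$; when $u_i=0$ this term is simply absent. So every summand has $\deg_t\le i-1<i$, which gives $\deg_t(u_{i+1})<i$ and closes the induction.

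We also use implicitly that $\deg_t$ of a sum is at most the maximum of the degrees, and that left multiplication by a polynomial in $x$ does not raise $\deg_t$. Both facts follow directly from the unique normal form $\sum g_j(x)t^j$.
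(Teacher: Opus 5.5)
Your proof is correct and follows the paper's argument: induction on $i$, using $[t^{i+1},x]=t[t^i,x]+[t,x]t^i$ together with $th=h(t+h')$. The only difference is that you write out $u_{i+1}=i\,h'\,t^{i-1}+t\,u_i+h'\,u_i$ explicitly and check its $t$-degree term by term, whereas the paper leaves this step implicit.
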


\begin{proof}
For $i=1$ is immediate, since $[t,x]=h(x)$.
Assume that $[t^i,x]=i\,h(x)\,t^{i-1}+h(x)u_i$ for some $u_i\in A_h$ with $\deg_t(u_i)<i-1$. Using
\[
[t^{i+1},x]=t[t^i,x]+[t,x]t^i
\]
and $th(x)=h(x)(t+h'(x))$, we obtain
\[
[t^{i+1},x]
=t\bigl(i\,h(x)\,t^{i-1}+h(x)u_i\bigr)+h(x)t^i
=(i+1)h(x)t^i+h(x)u_{i+1},
\]
for some $u_{i+1}\in A_h$ with $\deg_t(u_{i+1})<i$. This concludes the induction.
\end{proof}

\medbreak

\begin{lemma}\label{lema_rhotnai}
Let $r(x)\in\Bbbk[x]$, then 
    $(t+r(x))^i=t^i+ir(x)t^{i-1}+g_i$, for all $i \geq 1$, where $g_i\in A_h$ and $\deg_{t}(g_i)\leq i-2$.
\end{lemma}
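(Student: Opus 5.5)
We argue by induction on $i$. The case $i=1$ holds with $g_1=0$. The only tool we need is formula \eqref{comm_t_fx_eq}, $t f(x)=f(x)t+f'(x)h(x)$, together with Corollary \ref{comm_t_g} to control $t$-degrees when $t$ is moved past an element.

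For the inductive step, assume $(t+r)^i=t^i+ir\,t^{i-1}+g_i$ with $\deg_t(g_i)\le i-2$, where $r=r(x)$. We write
\[
(t+r)^{i+1}=(t+r)^i(t+r)=t^{i+1}+ir\,t^{i}+g_it+t^ir+ir\,t^{i-1}r+g_ir .
\]
We then bound each term in turn.

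First, $g_it$ has $t$-degree at most $i-1$. Next, $g_ir$ has $t$-degree at most $i-2$, because right multiplication by a polynomial does not raise $\deg_t$: the leading part $g_n(x)t^n f(x)$ equals $g_n(x)f(x)t^n$ plus lower terms. Likewise $ir\,t^{i-1}r=ir^2t^{i-1}+(\text{lower})$, so it has $t$-degree at most $i-1$.

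The remaining term is $t^ir$. Repeated use of \eqref{comm_t_fx_eq} shows that $t^if(x)=f(x)t^i+(\text{terms of }t\text{-degree}\le i-1)$. Equivalently, since $[t^i,-]$ is a derivation, $[t^i,r]$ is obtained from the $[t,r]$ computation and Corollary \ref{comm_t_g}, and has $t$-degree at most $i-1$. Hence $t^ir=r\,t^i+(\deg_t\le i-1)$.

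Collecting terms gives $(t+r)^{i+1}=t^{i+1}+(i+1)r\,t^i+g_{i+1}$, where $g_{i+1}$ collects everything of $t$-degree at most $i-1=(i+1)-2$. This completes the induction.

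The only point that needs care is the claim that $\deg_t([t^i,f(x)])\le i-1$. We prove it by its own quick induction: $[t^{i},f]=t[t^{i-1},f]+[t,f]t^{i-1}$ and $[t,f]=f'h$, so by Corollary \ref{comm_t_g} the first summand has $t$-degree at most $(i-2)+1$ and the second has $t$-degree at most $i-1$.
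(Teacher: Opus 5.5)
Your proof is correct, and it is the same induction on $i$ as in the paper. The only real difference is the side on which you peel off the extra factor.

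The paper writes $(t+r)^{i+1}=(t+r)(t+r)^i$. It therefore only ever moves a single $t$ to the right: past $r$ via \eqref{comm_t_fx_eq}, giving $t\,r\,t^{i-1}=r\,t^i+r'h\,t^{i-1}$, and past $g_i$ via Corollary \ref{comm_t_g}, giving $tg_i=g_it+h\tilde g_i$. No auxiliary estimate is needed.

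You write $(t+r)^{i+1}=(t+r)^i(t+r)$ instead. You then have to move the polynomial $r$ to the left past $t^i$, past $t^{i-1}$ and past $g_i$. That is why you need the extra bound $\deg_t\bigl([t^j,f]\bigr)\le j-1$ for $f\in\Bbbk[x]$, proved by a second induction. This bound generalizes the degree part of Lemma \ref{lematix} (which treats $f=x$). It is useful in its own right, since it says right multiplication by a polynomial does not raise $\deg_t$. For this lemma, though, the left-multiplication route is shorter.

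One small misattribution: the identity $[t^{i},f]=t[t^{i-1},f]+[t,f]t^{i-1}$ is the Leibniz rule for the derivation $[-,f]=-ad_f$, not for $[t^i,-]$. The identity itself is correct, so nothing breaks.
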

\begin{proof}
    We proof by induction on $i$.
    For $i=1$, it is clear.
    For $i=2$, $(t+r(x))^2=t^2+2r(x)t+r'(x)h(x)+(r(x))^2$, with $g_2=r'(x)h(x)+(r(x))^2\in\Bbbk[x]$.
    
    Recall from Corollary \ref{comm_t_g} that $tg=gt+h(x)\tilde{g}$, where $\deg_t(\tilde{g}) \leq \deg_t(g)$.
    Thus, assuming that the result holds for $i \geq 1$, 
    \begin{align*}
    (t+r(x))^{i+1} = & (t+r(x)) (t+r(x))^i \\
    = &  (t+r(x))(t^i + i r(x) t^{i-1} + g_i) \\
    = & t^{i+1} + i t r(x) t^{i-1} + t g_i+ r(x) t^i + i (r(x))^2 t^{i-1} +r(x) g_i \\
\stackrel{\eqref{comm_t_fx_eq}}{=} & t^{i+1} + i( r(x) t + r'(x)h(x)) t^{i-1} + g_i t + h(x) \tilde{g_i} + r(x)t^i \\
    & + i (r(x))^2 t^{i-1} + r(x) g_i \\
    = & t^{i+1} + (i+1)r(x) t^i + g_{i+1},
    \end{align*}
    where $g_{i+1} = i (r'(x)h(x) + (r(x))^2) t^{i-1} + g_i t + h(x) \tilde{g_i} +  r(x) g_i$, and, therefore, $\deg_t(g_{i+1}) \leq i-1$.
\end{proof}

\section{The group $\Aut(A_h)$, where $\deg(h)  \geq 1$}\label{section3}

In this section, we use a convenient change of variable to obtain an explicit characterization of the group of automorphisms of $A_h$, described in \cite{BLO2015}.
To achieve this goal, the following lemma plays a fundamental role.

\begin{lemma}\label{lemma_caso_bzeroVeloso}\cite[Lem. 18]{BiaVeldan} Let $h(x)$ be a monic polynomial  of degree $N \geq 2$ in $\Bbbk[x]$  with at least one nonzero root and such that the coefficient of the term of degree $N-1$ is zero. If $a,b\in \Bbbk$, $a\neq 0$,  then $h(ax+b)=a^N h(x)$ if, and only if, $b=0$, $a$ is a $\ell$-th root of unity and $h(x)=x^i g(x^\ell)$, $\ell \leq N$, with $g(x)\in \Bbbk[x]$ monic.
\end{lemma}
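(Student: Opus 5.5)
The plan is to compare coefficients of the two sides of $h(ax+b)=a^Nh(x)$, using the normalization of $h$ to force $b=0$ first. Once $b=0$, the problem reduces to a statement about which exponents occur in $h$. Write $h(x)=x^N+\sum_{j=0}^{N-2}c_jx^j$, which uses that $h$ is monic with no $x^{N-1}$ term.

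\emph{Direction ($\Rightarrow$).} First expand $h(ax+b)$ and read off the coefficient of $x^{N-1}$.
\begin{itemize}
\item Only $(ax+b)^N$ contributes to $x^{N-1}$, since the $(ax+b)^{N-1}$ term is absent. Its contribution is $Na^{N-1}b$.
\item On the right-hand side the coefficient of $x^{N-1}$ is $a^N\cdot 0=0$.
\item Since $\operatorname{char}\Bbbk=0$ and $a\neq 0$, this gives $b=0$.
\end{itemize}
The identity now reads $h(ax)=a^Nh(x)$. Comparing the coefficients of $x^j$ gives $c_ja^j=a^Nc_j$, so $a^{N-j}=1$ whenever $c_j\neq 0$.

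Next I use the nonzero root. If $h(\lambda)=0$ with $\lambda\neq0$, then $h\neq x^N$, so some $c_{j_0}\neq 0$ with $j_0\le N-2$. Hence $a^{N-j_0}=1$ with $N-j_0\ge 2$, so $a$ is a root of unity.
\begin{itemize}
\item Let $\ell$ be the multiplicative order of $a$. Then $\ell\mid N-j_0$, so $\ell\le N-j_0\le N$.
\item Every exponent $j$ with $c_j\ne0$, together with $j=N$, satisfies $j\equiv N \pmod{\ell}$.
\item Let $i$ be the smallest exponent occurring in $h$. All occurring exponents have the form $i+m\ell$ with $m\ge 0$.
\item Therefore $h(x)=x^ig(x^\ell)$ with $g\in\Bbbk[x]$, and $g$ is monic because $h$ is.
\end{itemize}

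\emph{Direction ($\Leftarrow$).} Assume $b=0$, $a^\ell=1$ and $h(x)=x^ig(x^\ell)$. Then $h(ax)=a^ix^ig(a^\ell x^\ell)=a^ih(x)$. Since $N=i+\ell\deg g$, we have $a^N=a^i$, and so $h(ax)=a^Nh(x)$.

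\emph{Main obstacle.} The delicate point is reading the statement correctly: $\ell$ is not fixed in advance but produced from $a$ as its order. With that reading, the only nontrivial input is the existence of a lower-order nonzero coefficient. That input guarantees $a$ has finite order and gives $\ell\le N$, and it is supplied by the hypothesis that $h$ has a nonzero root. The vanishing of the $x^{N-1}$ coefficient is used exactly once, to kill $b$, and this is where characteristic zero is needed.
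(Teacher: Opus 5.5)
Your proof is correct and complete. The key steps all check out:
\begin{itemize}
\item The $x^{N-1}$-coefficient comparison gives $Na^{N-1}b=0$, hence $b=0$ in characteristic zero.
\item The nonzero root forces some $c_{j_0}\neq 0$ with $j_0\le N-2$, so the order $\ell$ of $a$ divides $N-j_0\le N$.
\item The congruences $j\equiv N \pmod{\ell}$ for all $j\in\mathcal{S}_h$ give $h(x)=x^ig(x^\ell)$ with $g$ monic.
\item The converse is the one-line computation $h(ax)=a^ih(x)=a^Nh(x)$, and it works for any $\ell$ with $a^\ell=1$.
\end{itemize}

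The paper itself gives no proof of this statement; it is quoted from \cite[Lem. 18]{BiaVeldan}. So there is no in-paper argument to measure yours against. Your route is the same coefficient-comparison technique the paper uses elsewhere. Your condition $a^{N-j}=1$ whenever $c_j\neq 0$ is exactly Lemma~\ref{lemma_raizdaunidade} with exponent $N$, and it is what underlies the description \eqref{tau_P} of $\tau_{\mathbb{P}}$ as $\bigcap_{i\in\mathcal{S}_h}\mathbb{G}_{N-i}$.

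Your argument also makes visible something the paper's presentation obscures. The step $b=0$ uses only that $h$ is monic with vanishing $x^{N-1}$ coefficient and $N\geq 1$. It uses neither the nonzero-root hypothesis nor $N\geq 2$. So the same two lines also prove Lemma~\ref{lema_xN_bzero} and the case $h^*=x$. As a result, the three separate cases in the normalization proposition at the start of Section~\ref{section3} collapse into one. The nonzero-root hypothesis is needed only to guarantee that $a$ has finite order with $\ell\le N$. When $h=x^N$ that conclusion genuinely fails, since every $a\in\Bbbk^\ast$ is allowed.
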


Since $\Bbbk$ is an algebraically closed field of characteristic zero, in the following we consider the polynomial $h(x) = x^N$ with $N\geq 1$, and it is immediate to conclude that part of this result also holds.

\begin{lemma}\label{lema_xN_bzero}
   Let $h(x)=x^N$, $N \geq 1$, and $a, b \in \Bbbk, a \neq 0$.
    Then, $h(ax+b) = a^N h(x)$ if and only if $b=0.$
\end{lemma}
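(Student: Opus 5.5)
The plan is to compare the two sides of $h(ax+b)=a^N h(x)$ as polynomials in $x$, using the binomial expansion
$$(ax+b)^N=\sum_{j=0}^{N}\binom{N}{j}a^j b^{N-j}x^j,$$
and to treat the two implications separately.

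For the direction ``$b=0$ implies the identity'', the computation is immediate. Substituting $b=0$ gives $h(ax)=(ax)^N=a^Nx^N=a^Nh(x)$, which holds for every $a\neq 0$.

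For the converse, suppose $(ax+b)^N=a^Nx^N$ in $\Bbbk[x]$. I would split into two cases according to $N$.

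When $N\geq 2$, compare the coefficients of $x^{N-1}$. On the left this coefficient is $\binom{N}{N-1}a^{N-1}b=Na^{N-1}b$, and on the right it is $0$. Because $\operatorname{char}\Bbbk=0$ we have $N\neq 0$ in $\Bbbk$, and $a\neq 0$ by hypothesis, so $b=0$.

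When $N=1$, compare constant terms instead. The identity reads $ax+b=ax$, which forces $b=0$ directly. Equivalently, one can use the constant coefficient $b^N$, which must vanish for every $N\geq 1$, and hence $b=0$; this argument covers all $N$ uniformly and does not need the characteristic assumption.

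There is no real obstacle here. The only point needing care is that the coefficient argument via $x^{N-1}$ uses characteristic zero (and is vacuous when $N=1$), which is why the constant-term comparison is the cleanest choice. This is consistent with Lemma \ref{lemma_caso_bzeroVeloso}, since $x^N$ has no nonzero root, but the result does not depend on that lemma.
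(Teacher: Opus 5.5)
Your proof is correct: comparing constant terms in $(ax+b)^N=a^Nx^N$ gives $b^N=0$, hence $b=0$ in any field, and the converse is the one-line substitution $h(ax)=(ax)^N=a^Nh(x)$. The paper gives no written argument, calling the statement immediate as a companion to Lemma \ref{lemma_caso_bzeroVeloso}, so your coefficient comparison is the routine verification left to the reader; your constant-term version also shows that the characteristic-zero hypothesis is not needed here.
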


\medbreak

\begin{example}
    Let $h(x)=x^N$, with $N \geq 1$.
    By Lemma \ref{lema_xN_bzero} and Theorem \ref{cod_aut}, we get $\Aut(A_{x^N}) \cong \Bbbk[x]\rtimes \tau_{\mathbb{P}} \cong \Bbbk[x]\rtimes \Bbbk^*$.
\end{example}

\begin{proposition}
    Let $h(x)\in \Bbbk[x]$, $\deg(h)\geq 1$.  Then, there exists a monic polynomial $h^*\in \Bbbk[x]$ such that $A_h\simeq A_{h^*}$ and 
    $$\Aut(A_{h^*})=\{\sigma_{r(x)}\circ\tau_{a} \ | \ h^*(ax)=a^Nh^*(x)\},$$
where $\tau_a := \tau_{a,0}$.
\end{proposition}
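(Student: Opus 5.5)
The plan is to normalize $h$ by an affine change of variable, use Theorem \ref{teo_isoA_h} to transport the algebra, and then show that every translation parameter $b$ in $\mathbb{P}$ must vanish for the normalized polynomial.

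\textbf{Normalization.} Write $h(x)=c_Nx^N+c_{N-1}x^{N-1}+\cdots+c_0$ with $c_N\neq 0$. Put
\[
\beta=-\frac{c_{N-1}}{Nc_N},\qquad h^*(x)=c_N^{-1}h(x+\beta).
\]
Since $\operatorname{char}\Bbbk=0$, $h^*$ is monic of degree $N$. Expanding $(x+\beta)^N$, its coefficient of $x^{N-1}$ is $c_N N\beta+c_{N-1}=0$. By Theorem \ref{teo_isoA_h}, taking $\alpha=1$ and $\gamma=c_N$, we get $A_h\simeq A_{h^*}$.

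\textbf{Reduction to $b=0$.} By Theorem \ref{cod_aut}, every automorphism of $A_{h^*}$ has the form $\sigma_{r(x)}\circ\tau_{a,b}$ with $h^*(ax+b)=a^Nh^*(x)$. So it suffices to show that any such pair has $b=0$; the converse inclusion is then immediate from Theorem \ref{cod_aut}.

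Compare the coefficients of $x^{N-1}$ on both sides of $h^*(ax+b)=a^Nh^*(x)$. On the left, $h^*(ax+b)=(ax+b)^N+0\cdot(ax+b)^{N-1}+\cdots$, so the coefficient is $Na^{N-1}b$. On the right it is $a^N\cdot 0=0$. Since $a\neq 0$ and $\operatorname{char}\Bbbk=0$, this forces $b=0$.

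This direct coefficient comparison covers every case. It makes Lemma \ref{lemma_caso_bzeroVeloso} (for $N\ge2$ with a nonzero root) and Lemma \ref{lema_xN_bzero} (for $h^*=x^N$) unnecessary, though both are consistent with it.

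\textbf{Small cases.} If $N=1$, then $h^*=x$, and $h^*(ax+b)=ax+b=a\,h^*(x)$ again forces $b=0$. If all roots of $h^*$ are zero, then $h^*=x^N$ and Lemma \ref{lema_xN_bzero} applies directly.

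\textbf{Main obstacle.} The only subtle point is notational. The statement writes $a^N$, where $N=\deg h=\deg h^*$, and the isomorphism must preserve the degree; the final clause of Theorem \ref{teo_isoA_h} guarantees this. Beyond that, everything is routine bookkeeping.
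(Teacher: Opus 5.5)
Your proof is correct, and it differs from the paper's at the one real step, forcing $b=0$.

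The paper uses the same normalization $h^*(x)=c_N^{-1}h\bigl(x-\frac{c_{N-1}}{Nc_N}\bigr)$. It then splits into three cases:
\begin{itemize}
\item $N=1$;
\item $N\ge 2$ with a nonzero root, where it invokes the external Lemma~\ref{lemma_caso_bzeroVeloso};
\item $h^*=x^N$, where it uses Lemma~\ref{lema_xN_bzero}.
\end{itemize}
You instead compare the coefficient of $x^{N-1}$ in $h^*(ax+b)=a^Nh^*(x)$ directly. Only $(ax+b)^N$ contributes on the left, which gives $Na^{N-1}b=0$.

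This argument is uniform in $N$. For $N=1$ it is just the constant-term comparison, since $c_0=c_{N-1}=0$ forces $h^*=x$. So your separate ``small cases'' paragraph is redundant. Your route also makes the proposition independent of the cited lemma and avoids the case split altogether.

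The paper's route yields extra information that the proposition does not need: Lemma~\ref{lemma_caso_bzeroVeloso} also says $a$ is a root of unity and $h^*=x^ig(x^\ell)$. The paper recovers the constraint on $a$ separately later, via Lemma~\ref{lemma_raizdaunidade}, so nothing is lost by your shortcut.

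You also close with Theorem~\ref{cod_aut}, which is the appropriate citation for the final description of $\Aut(A_{h^*})$. The paper cites Theorem~\ref{8_6_Samuel} at that point.
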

\begin{proof} Let $h(x)=\sum_{i=0}^{N}c_ix^i$, $\deg(h)=N\geq 1$, and consider
     $h^*(x)=\frac{1}{c_N}h(x-\frac{c_{N-1}}{N c_N})$. By Theorem  \ref{teo_isoA_h} $A_h\simeq A_{h^*}$ and, in this case, $h^*$ is a monic polynomial and has the coefficient of the term of degree $N-1$ equal to zero. Therefore, we have the following cases, for $a,b\in\Bbbk$, $a\neq 0$:
\begin{itemize}
    \item[i)] If $\deg(h)=1$, then $h^*(x)=x$. Thus, $h^*(ax+b)=ah^*(x)$ if and only if $b=0$.

    \item[ii)] If $\deg(h)\geq 2$ with at least one nonzero root, then $h^*(ax+b)=ah^*(x)$ if and only if $b=0$, by Lemma  \ref{lemma_caso_bzeroVeloso}.

     \item[iii)] If $\deg(h)=N\geq 2$ and has all roots equal to zero, then $h^*(x)=x^N$, and so $h^*(ax+b)=ah^*(x)$ if and only if $b=0$, by Lemma \ref{lema_xN_bzero}.
\end{itemize}

     Therefore, by Theorem \ref{8_6_Samuel}, 
     $\Aut(A_{h^*})=\{\sigma_{r(x)}\circ\tau_{a}| h^*(ax)=a^Nh^*(x)\}.$
\end{proof}

\medbreak 

In light of the above proposition, from now on we always consider a polynomial $h(x)=\sum_{i=0}^N c_i x^i$, where $N \geq 1$, $c_N=1$ and $c_{N-1}=0$, and so 
\begin{equation}\label{aut_Ah}
    \Aut(A_{h})=\{\sigma_{r(x)}\circ\tau_{a} \ | \ h(ax)=a^Nh(x)\}.
\end{equation}

\medbreak

In what follows, we will use the following notations:
\begin{itemize}
    \item for $k, s \in \mathbb{Z},$ where $k \leq s$, let $\I_{k,s} = \{ x \in \mathbb{Z} \ | \ k \leq x \leq s\}$;
    \item for $f=\sum_{i=0}^{N}c_ix^i\in \Bbbk[x]$, let $\mathcal{S}_f = \{ i \ | \ c_i \neq 0\}$.
\end{itemize}

\begin{lemma}\label{lemma_raizdaunidade}
Let $f=\sum_{i=0}^{N}c_ix^i\in \Bbbk[x]$, $N\geq 0$, $a\in \Bbbk^\ast$ and $\ell\in\mathbb{Z}$.
Then, $a^\ell f(x) = f(ax)$  if and only if $a\in \mathbb{G}_{|i-\ell|}$, for all $i \in \mathcal{S}_f.$
In particular, $a \in \mathbb{G}_{|\deg(f)-\ell|}$.
\end{lemma}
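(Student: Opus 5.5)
The plan is a direct comparison of coefficients, since $\{x^i\}_{i\ge 0}$ is a $\Bbbk$-basis of $\Bbbk[x]$. Write
\[
f(ax)=\sum_{i=0}^N c_i a^i x^i, \qquad a^\ell f(x)=\sum_{i=0}^N c_i a^\ell x^i .
\]
Here $\ell\in\mathbb{Z}$ and $a\in\Bbbk^\ast$, so $a^\ell$ makes sense even for negative $\ell$. The identity $a^\ell f(x)=f(ax)$ is therefore equivalent to the family of scalar equations
\[
c_i a^i = c_i a^\ell \quad\text{for all } 0\le i\le N .
\]

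For $i\notin\mathcal{S}_f$ these equations hold trivially. For $i\in\mathcal{S}_f$ we may cancel $c_i\neq 0$, which leaves $a^i=a^\ell$. Since $a\neq 0$, this is equivalent to $a^{i-\ell}=1$, and hence to $a^{|i-\ell|}=1$. When $i\neq \ell$, this says exactly that $a\in\mathbb{G}_{|i-\ell|}$.

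The only delicate point is the index $i=\ell$. There the condition $a^0=1$ is automatic, while the convention $\mathbb{G}_0=\Bbbk^\ast$ also gives membership automatically, because $a\in\Bbbk^\ast$. So in every case the condition for index $i\in\mathcal{S}_f$ reads $a\in\mathbb{G}_{|i-\ell|}$. Both implications of the equivalence follow at once, since each step above is reversible.

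For the ``in particular'' part, I would separate two cases. If $f\neq 0$, then $\deg(f)\in\mathcal{S}_f$, because the leading coefficient is nonzero, so we may take $i=\deg(f)$ in the criterion. If $f=0$, the degree is $-\infty$ and the claim is vacuous, or should be read as excluded; I would note that the statement implicitly assumes $f\neq0$. I expect no real obstacle here beyond handling the $\mathbb{G}_0$ convention and this degenerate case.
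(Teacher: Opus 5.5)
Your proposal is correct and follows the paper's own proof. Like the paper, you expand $f(ax)$ and $a^\ell f(x)$, compare coefficients, cancel $c_i\neq 0$ for $i\in\mathcal{S}_f$, and use $\deg(f)\in\mathcal{S}_f$ for the final claim; your remarks on the convention $\mathbb{G}_0=\Bbbk^\ast$ when $i=\ell$ and on needing $f\neq 0$ (i.e.\ $c_N\neq 0$) make explicit what the paper leaves implicit.
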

\begin{proof}
    Let $f=\sum_{i=0}^{N}c_ix^i \in \Bbbk[x]$.
    Then, \begin{eqnarray*}
        a^\ell f(x) = f(ax) & \Longleftrightarrow & \sum_{i=0}^{N}a^{\ell}c_ix^i= \sum_{i=0}^{N}a^ic_ix^i\\
        & \Longleftrightarrow & a^{\ell}c_i= a^ic_i, \ \forall i \in \I_{0,N}\\
        & \Longleftrightarrow & a^{\ell}=a^i, \ \forall i \in \mathcal{S}_f \\
        & \Longleftrightarrow & a^{|i-\ell|}=1, \ \forall i \in \mathcal{S}_f \\
        & \Longleftrightarrow & a\in \mathbb{G}_{|i-\ell|}, \ \forall i \in \mathcal{S}_f.
    \end{eqnarray*}
    In particular, $N=\deg(f) \in \mathcal{S}_h$ and, therefore, $a \in \mathbb{G}_{|\deg(f)-\ell|}$.
\end{proof}

We will use the above lemma for $\deg(h)=N \geq \ell \geq 0$.
In that case, for $h(x)=\sum\limits_{i=0}^{N}c_ix^i $, $N\geq 1$, we have $\Aut(A_h)=\Bbbk[x]\rtimes \tau_{\mathbb{P}},$ where 
\begin{equation}\label{tau_P}
\tau_{\mathbb{P}}:=\ \left\{\tau_{a} \ | \ a\in \cap_{i \in \mathcal{S}_h}\mathbb{G}_{N-i}\right\} \subseteq \Bbbk^{\ast}.
\end{equation}

Thus, we can precisely determine $\tau_{\mathbb{P}}$: it is finite if and only if $\cap_{i \in \mathcal{S}_h}\mathbb{G}_{N-i}$ is finite if and only if there is $c_i\neq 0$, $i\in \I_{0,N-1}$, \emph{i.e.}, there exists $i \in \mathcal{S}_h \backslash \{N\}$.
In that case, $\cap_{i \in \mathcal{S}_h}\mathbb{G}_{N-i}=\mathbb{G}_{gcd(N-i_1, N-i_2, ..., N-i_k)},$ where $\mathcal{S}_h = \{ i_1, i_2, ..., i_k, N \}.$
Otherwise, $\tau_{\mathbb{P}} = \Bbbk^\ast$, and so it is infinity. 

\medbreak

Therefore, we obtain a more explicit characterization for $\Aut(A_h)$ than Theorem \ref{8_6_Samuel}, and conclude this section by summarizing the above discussion as follows.

\begin{corollary}\label{carac_Aut}
Let $h = \sum_{i=0}^{N}c_ix^i \in  \Bbbk[x]$, $\deg(h) = N \geq 1$.
Then,
\begin{itemize}
    \item[i)] $\Aut(A_h) = \Bbbk[x] \rtimes \Bbbk^\ast$, if $h(x)=x^N$; 
    \item[ii)] $\Aut(A_h) = \Bbbk[x] \rtimes \left(\cap_{i \in \mathcal{S}_h}\mathbb{G}_{N-i}\right) = \Bbbk[x] \rtimes \mathbb{G}_{gcd(N-i_1, N-i_2, ..., N-i_k)},$ where $\mathcal{S}_h = \{i_1, i_2, ..., i_k, N \}$, otherwise.
\end{itemize}
\end{corollary}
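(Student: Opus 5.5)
The plan is to assemble Corollary~\ref{carac_Aut} from the normalized description \eqref{aut_Ah} together with Lemma~\ref{lemma_raizdaunidade}. By the preceding proposition we may assume $h$ is monic with $c_{N-1}=0$. Then every automorphism has the form $\sigma_{r(x)}\circ\tau_a$ with $h(ax)=a^Nh(x)$, and the subgroup $\{\sigma_{r(x)}\}$ is identified with $(\Bbbk[x],+)$ via $\sigma_{r}\sigma_{r'}=\sigma_{r+r'}$. Theorem~\ref{cod_aut} already supplies the semidirect product structure $\Bbbk[x]\rtimes\tau_{\mathbb{P}}$. So the whole task reduces to identifying the group $\{a\in\Bbbk^\ast \mid h(ax)=a^Nh(x)\}$ and checking that $a\mapsto\tau_a$ is an injective homomorphism. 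Injectivity is clear from $\tau_a(x)=ax$, and the homomorphism property follows from $\tau_a\tau_{a'}(x)=aa'x$ and $\tau_a\tau_{a'}(t)=(aa')^{N-1}t$.

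Next, apply Lemma~\ref{lemma_raizdaunidade} with $f=h$ and $\ell=N$. It gives $h(ax)=a^Nh(x)$ if and only if $a\in\mathbb{G}_{|i-N|}=\mathbb{G}_{N-i}$ for every $i\in\mathcal{S}_h$. The index $i=N$ contributes $\mathbb{G}_0=\Bbbk^\ast$, so it imposes no condition. Hence $\tau_{\mathbb{P}}\cong\bigcap_{i\in\mathcal{S}_h}\mathbb{G}_{N-i}$, which is \eqref{tau_P}.

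Now split into the two cases of the statement.
\begin{itemize}
\item[i)] If $h=x^N$, then $\mathcal{S}_h=\{N\}$, the intersection equals $\Bbbk^\ast$, and we get $\Aut(A_h)=\Bbbk[x]\rtimes\Bbbk^\ast$. This agrees with Lemma~\ref{lema_xN_bzero}.
\item[ii)] Otherwise, since $h$ is monic with $c_{N-1}=0$, the set $\mathcal{S}_h=\{i_1,\dots,i_k,N\}$ has $k\geq1$ and every $N-i_j\geq 2$. We must show $\bigcap_j\mathbb{G}_{N-i_j}=\mathbb{G}_{d}$ with $d=\gcd(N-i_1,\dots,N-i_k)$. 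For $\supseteq$: if $a^d=1$ then $a^{N-i_j}=1$ for each $j$, because $d$ divides each $N-i_j$. For $\subseteq$: by Bézout, $d=\sum_j m_j(N-i_j)$ with $m_j\in\mathbb{Z}$, so $a^d=\prod_j (a^{N-i_j})^{m_j}=1$.
\end{itemize}

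No step is a genuine obstacle. The only point needing care is the normalization. The corollary is stated for an arbitrary $h$ of degree $N$, so we must use the convention stated right after \eqref{aut_Ah}, namely that $h$ has already been replaced by $h^\ast$, via Theorem~\ref{teo_isoA_h}. Without this replacement the translation parameter $b$ in $\tau_{a,b}$ could be nonzero and $\mathcal{S}_h$ would not be invariant. We also need the fact that the index $N$ itself drops out of the intersection because $\mathbb{G}_0=\Bbbk^\ast$.
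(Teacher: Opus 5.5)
Your proposal is correct and follows the paper's own route. You start from the normalized description \eqref{aut_Ah}, apply Lemma~\ref{lemma_raizdaunidade} with $\ell=N$ (the index $i=N$ contributes only $\mathbb{G}_0=\Bbbk^\ast$), and identify $\bigcap_{i\in\mathcal{S}_h}\mathbb{G}_{N-i}$ with $\mathbb{G}_{\gcd(N-i_1,\dots,N-i_k)}$. Your B\'ezout argument and your check that $a\mapsto\tau_a$ is an injective homomorphism simply make explicit steps the paper leaves implicit.
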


\medbreak 

By \eqref{aut_Ah}, we know that if $\rho \in \Aut(A_h)$, $\deg(h)\geq 1$, then $\rho = \sigma_{r(x)} \circ \tau_a$, for some $r(x) \in \Bbbk[x]$ and $a \in \Bbbk^\ast$.
However, in what follows, sometimes we need to consider the composition in the inverse order $\tau_a \circ \sigma_{r(x)}$, also $\rho^{-1}$ and the powers of $\rho$.
Note that $\tau_a \circ \sigma_{r(x)} = \sigma_{a^{1-N} r (ax)} \circ \tau_a$ and so $\rho^{-1} = \tau_{a^{-1}} \circ \sigma_{-r(x)} = \sigma_{-a^{N-1}r(a^{-1}x)} \circ \tau_{a^{-1}}$.
For the powers of $\rho$, we have the following lemma.

\begin{lemma}\label{rho_n}
 Let $\rho= \sigma_{r(x)}\circ \tau_a\in \Aut(A_h)$, $\deg(h)\geq 1$. 
Then 
$\rho^n = \sigma_{R_n(x)} \circ \tau_{a^n}$, for all $n\geq 1$, where $R_n(x) = \sum_{i=0}^{n-1} a^{i(1-N)} r(a^i x)$. 
    \end{lemma}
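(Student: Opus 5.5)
We argue by induction on $n$, using the commutation rule $\tau_a \circ \sigma_{r(x)} = \sigma_{a^{1-N} r(ax)} \circ \tau_a$ recorded just before the statement, together with two elementary composition laws. The first is $\sigma_{p(x)}\circ\sigma_{q(x)}=\sigma_{p(x)+q(x)}$. It holds because both sides fix $x$ and send $t$ to $t+q(x)+p(x)$, since $\sigma_{p}$ fixes $q(x)\in\Bbbk[x]$. The second is $\tau_a\circ\tau_b=\tau_{ab}$. It holds because both sides send $x\mapsto abx$ and $t\mapsto (ab)^{N-1}t$. Before using the commutation rule, we verify it on the generators. Applying $\tau_a\circ\sigma_{r}$ to $t$ gives $a^{N-1}t+r(ax)$. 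Applying $\sigma_{a^{1-N}r(ax)}\circ\tau_a$ to $t$ gives $a^{N-1}(t+a^{1-N}r(ax))$, which is the same element. On $x$, both sides give $ax$.

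For $n=1$ we have $R_1(x)=r(x)$, so the claim is just $\rho=\sigma_{r(x)}\circ\tau_a$. For the inductive step, assume $\rho^n=\sigma_{R_n}\circ\tau_{a^n}$. Then
$$\rho^{n+1}=\rho\circ\rho^n=\sigma_{r}\circ\tau_a\circ\sigma_{R_n}\circ\tau_{a^n}=\sigma_{r}\circ\sigma_{a^{1-N}R_n(ax)}\circ\tau_a\circ\tau_{a^n}=\sigma_{r(x)+a^{1-N}R_n(ax)}\circ\tau_{a^{n+1}}.$$
It remains to check the polynomial identity
$$r(x)+a^{1-N}R_n(ax)=r(x)+\sum_{i=0}^{n-1}a^{(i+1)(1-N)}r(a^{i+1}x).$$
Shifting the index to $j=i+1$ turns the right-hand side into $\sum_{j=0}^{n}a^{j(1-N)}r(a^jx)=R_{n+1}(x)$, which completes the induction.

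There is no real obstacle here. The only point needing care is the direction of the conjugation rule, that is, making sure $\tau_a$ acts on the argument of $r$ as $r(ax)$ with the factor $a^{1-N}$, not $a^{N-1}$. This is settled by the direct evaluation on $t$ given above.
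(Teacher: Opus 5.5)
Your proposal is correct and follows essentially the same route as the paper: induction on $n$, using the rule $\tau_a \circ \sigma_{r(x)} = \sigma_{a^{1-N} r(ax)} \circ \tau_a$ to move $\tau_a$ past $\sigma_{R_n}$, then merging the $\sigma$'s and $\tau$'s and reindexing the sum. Your checks of the commutation rule and of the composition laws on the generators $x,t$ are correct; the paper simply takes them for granted.
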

    \begin{proof}
 We prove it by induction on $n$.
  For $n=1$ is clear.    
Assume that it holds for $n\geq 1$.
Since $\tau_a \circ \sigma_{r(x)} = \sigma_{a^{1-N} r (ax)} \circ \tau_a$, then
    \begin{eqnarray*}
        \rho^{n+1}&=& \rho\circ\rho^n\\
        &=& \sigma_{r(x)}\circ\tau_a\circ\sigma_{R_n(x)}\circ\tau_{a^n}\\
        &=& \sigma_{r(x)}\circ\sigma_{a^{1-N}R_n(ax)}\circ\tau_a\circ\tau_{a^n}\\
        &=& \sigma_{r(x)+ \sum_{i=0}^{n-1} a^{(i+1)(1-N)} r(a^{i+1} x)}\circ\tau_{a^{n+1}}\\
        &=& \sigma_{ \sum_{i=0}^{n} a^{i(1-N)} r(a^i x)}\circ\tau_{a^{n+1}}\\
        &=& \sigma_{R_{n+1}(x)} \circ \tau_{a^{n+1}},
    \end{eqnarray*}
    and, therefore, the result follows.
     \end{proof}

\section{The square-free case}\label{section4}

Consider a  square-free polynomial $h\in \Bbbk[x]$, where $\deg(h)\geq 1$.
In this case, it is known by \cite{Nowicki} (see Thm. \ref{nowicki_mdc1}) that every derivation $D$ in $A_h$ has a unique decomposition $D=ad_w + \Delta_{s(x)}$, where $ad_w$ is the inner derivation of $A_h$ determined by $w \in A_h$, and $\Delta_{s(x)} \in \der(A_h)$ is given by  $\Delta_{s(x)} (x)=0$ and $\Delta_{s(x)} (t)=s(x)$, where $s(x) \in \Bbbk[x]$ with $\deg(s) < \deg(h)$. In this section, we characterize the isotropy group $\Aut_D(A_h)$, and determine it explicitly in some interesting cases. 

\medbreak 

In order to understand $\Aut_{ad_w + \Delta_{s(x)}}(A_h)$, we start with more general considerations.

\begin{lemma}\label{lema_interna_geral}
    
Let $ R $ be a $ \Bbbk$-algebra, $ ad_w $ an inner derivation of $R$, and $ \varphi \in \Aut(R) $. Then $ \varphi \in \Aut_{ad_w}(R) $ if and only if $ \varphi(w) - w \in Z(R)$ .
\end{lemma}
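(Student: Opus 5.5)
The plan is to compute the conjugate $\varphi\, ad_w\, \varphi^{-1}$ explicitly and compare it with $ad_w$. Since $\varphi$ is an algebra automorphism, for every $b\in R$ we have
\[
\varphi\, ad_w\, \varphi^{-1}(b)=\varphi\bigl(w\varphi^{-1}(b)-\varphi^{-1}(b)w\bigr)=\varphi(w)\,b-b\,\varphi(w)=ad_{\varphi(w)}(b).
\]
So the first step records the identity $\varphi\, ad_w\,\varphi^{-1}=ad_{\varphi(w)}$.

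Next, by the definition of the isotropy subgroup, $\varphi\in\Aut_{ad_w}(R)$ if and only if $\varphi\, ad_w=ad_w\,\varphi$. Since $\varphi$ is invertible, this holds if and only if $\varphi\, ad_w\,\varphi^{-1}=ad_w$, that is, if and only if $ad_{\varphi(w)}=ad_w$.

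Finally, the map $a\mapsto ad_a$ is $\Bbbk$-linear, so $ad_{\varphi(w)}-ad_w=ad_{\varphi(w)-w}$. This derivation vanishes exactly when $[\varphi(w)-w,b]=0$ for all $b\in R$, which means $\varphi(w)-w\in Z(R)$. That gives the claimed equivalence.

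There is no real obstacle here. The only point needing care is the conjugation identity in the first step, which uses that $\varphi$ is multiplicative and bijective.
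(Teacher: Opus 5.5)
Your proof is correct and is essentially the paper's argument. The paper compares $\varphi([w,u])=[\varphi(w),\varphi(u)]$ with $[w,\varphi(u)]$ and uses that $\varphi(u)$ ranges over all of $R$; this is exactly your identity $\varphi\, ad_w\,\varphi^{-1}=ad_{\varphi(w)}$ written without the inverse, and both rely on $\varphi$ being multiplicative and surjective.
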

\begin{proof} 
Since $ \varphi $ is an automorphism, it holds $ \varphi([w,u]) = [\varphi(w), \varphi(u)]$, for all $ u \in R $.
Moreover, $ \varphi \in \Aut_{ad_w}(R) $ if and only if $ \varphi([w,u]) = [w, \varphi(u)]$.
At last, $[\varphi(w), \varphi(u)] = [w, \varphi(u)]$ if and only if $\varphi(w) -w \in Z(R)$.
\end{proof}

However, it is already known that the center of the algebra $A_h$ is the field $\Bbbk$ (see \cite[Sec. 5]{BLO2015}), thus we directly have the following result:

\begin{corollary} \label{eq_groupisot_interna} Let $h \in \Bbbk[x]$ with $\deg(h)\geq 0$. Consider $w \in A_h$ and the inner derivation $ad_w$ of $A_h$.
    Then, %the isotropy group of $D$ is given by
    \begin{equation*}
    \Aut_{ad_w}(A_h)=\{\rho \in \Aut(A_h) \ | \ \rho(w)-w\in \Bbbk \}.
    \end{equation*}
    \end{corollary}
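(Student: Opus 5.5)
The corollary is Lemma \ref{lema_interna_geral} specialized to $R=A_h$, combined with the fact that the center of $A_h$ is the base field. The plan is therefore two steps.

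\emph{Step 1.} Apply Lemma \ref{lema_interna_geral} with $R=A_h$ and an arbitrary $\rho\in\Aut(A_h)$. This gives that $\rho\in\Aut_{ad_w}(A_h)$ if and only if $\rho(w)-w\in Z(A_h)$. That lemma only needs $\rho$ to be an algebra automorphism, which it is. The key identity is
\[
\rho\, ad_w\, \rho^{-1}=ad_{\rho(w)},
\]
so commuting with $ad_w$ amounts to $ad_{\rho(w)-w}=0$.

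\emph{Step 2.} Identify $Z(A_h)=\Bbbk$. I would cite \cite[Sec. 5]{BLO2015}, as the paper does. If a self-contained argument is wanted for $\deg(h)\geq 1$, I would proceed as follows.

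Take a central element $z=\sum_{i=0}^n z_i(x)t^i$. Commuting with $x$, Lemma \ref{lematix} shows that the coefficient of $t^{n-1}$ in $[z,x]$ is $n\,z_n(x)h(x)$ plus terms from lower powers. Working in the domain $A_h$ with leading terms in $t$, this forces $n=0$, so $z\in\Bbbk[x]$.

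Commuting with $t$ and using \eqref{comm_t_fx_eq} gives $[t,z]=z'(x)h(x)=0$. Since $h\neq 0$ and the characteristic is zero, $z'=0$, hence $z\in\Bbbk$.

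For $\deg(h)=0$, the algebra $A_h$ is the Weyl algebra, whose center is also $\Bbbk$ in characteristic zero, so the statement covers $\deg(h)\geq 0$ as claimed.

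Combining the two steps gives
\[
\Aut_{ad_w}(A_h)=\{\rho\in\Aut(A_h)\mid \rho(w)-w\in\Bbbk\}.
\]

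The only potential obstacle is the center computation in Step 2, specifically the leading-coefficient argument for $[z,x]$. It follows directly from Lemma \ref{lematix} and the fact that $A_h$ is a domain, and can in any case be quoted from the literature.
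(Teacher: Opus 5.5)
Your proof is correct and follows the paper's route: you apply Lemma \ref{lema_interna_geral} with $R=A_h$ and then use $Z(A_h)=\Bbbk$, which the paper simply quotes from \cite[Sec. 5]{BLO2015}. Your optional self-contained computation of the center is also valid and could replace the citation: the coefficient of $t^{n-1}$ in $[z,x]$ is exactly $n\,z_n h$ by Lemma \ref{lematix}, and for $z\in\Bbbk[x]$ one has $[t,z]=z'h$.
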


\medbreak 

If $\deg(h)=0$, all derivations are inner since $A_h\simeq A_1$ is the (first) Weyl algebra (see, for instance, \cite[Thm. 9.1]{Nowicki}), and so the above result characterizes their isotropy groups.
Moreover, 
if $w \in \Bbbk$,  then $\Aut_{ad_w}(A_h) = \Aut(A_h)$ since $ad_w=0$ and $\rho(w) = w$.

\medbreak 

 Recall that $\der(R)$ is an $\Aut(R)$-module via $\rho \cdot d = \rho d \rho^{-1}$, for $\rho\in \Aut(R)$ and $d\in \der(R)$. 
Since $\rho \, ad_w \, \rho^{-1} = ad_{\rho(w)}$, the set of inner derivations $\Inn(R) = \{ad_w \ | \ w \in R \}$ is an $\Aut(R)$-submodule. 

Clearly $\Aut_{d_1}(R) \cap \Aut_{d_2}(R) \subseteq \Aut_{d_1+d_2}(R)$, for any $d_1, d_2 \in \der(R)$.
The next proposition gives a condition when the equality holds.

\begin{lemma}\label{independ}
    Let $\mathcal{D}_1$ and $\mathcal{D}_2$ be two independent $\Aut(R)$-submodules of $\der(R)$.
    For any $d_1 \in \mathcal{D}_1$, $d_2 \in \mathcal{D}_2$, it holds $\Aut_{d_1+d_2}(R)=\Aut_{d_1}(R) \cap \Aut_{d_2}(R)$.
\end{lemma}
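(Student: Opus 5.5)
We prove the two inclusions separately; the independence of $\mathcal{D}_1$ and $\mathcal{D}_2$ is used only for the nontrivial one. We read "independent" in the natural sense that $\mathcal{D}_1 \cap \mathcal{D}_2 = \{0\}$, so that the sum $\mathcal{D}_1 + \mathcal{D}_2$ is direct. The inclusion $\Aut_{d_1}(R)\cap \Aut_{d_2}(R) \subseteq \Aut_{d_1+d_2}(R)$ was already observed before the statement. If $\rho$ commutes with $d_1$ and with $d_2$, then
\[
\rho(d_1+d_2)=\rho d_1+\rho d_2=d_1\rho+d_2\rho=(d_1+d_2)\rho .
\]

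For the reverse inclusion, take $\rho \in \Aut_{d_1+d_2}(R)$. We rewrite the commutation condition as $\rho\cdot(d_1+d_2)=d_1+d_2$, where $\rho\cdot d=\rho d\rho^{-1}$ is the module action. This action is additive in $d$, since $\rho(d_1+d_2)\rho^{-1}=\rho d_1\rho^{-1}+\rho d_2\rho^{-1}$. Hence
\[
\rho\cdot d_1-d_1 \;=\; d_2-\rho\cdot d_2 .
\]

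Since $\mathcal{D}_1$ and $\mathcal{D}_2$ are $\Aut(R)$-submodules, $\rho\cdot d_1\in\mathcal{D}_1$ and $\rho\cdot d_2\in \mathcal{D}_2$. So the left side of the displayed equation lies in $\mathcal{D}_1$ and the right side lies in $\mathcal{D}_2$. The common value therefore lies in $\mathcal{D}_1\cap\mathcal{D}_2=\{0\}$. It follows that $\rho d_1\rho^{-1}=d_1$ and $\rho d_2\rho^{-1}=d_2$, that is, $\rho\in \Aut_{d_1}(R)\cap\Aut_{d_2}(R)$.

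The argument has no computational obstacle. The only care needed is to fix the meaning of "independent" as trivial intersection (equivalently, a direct sum inside $\der(R)$) and to use stability under the action, which is exactly what the word "submodule" provides. The real work comes later, when the lemma is applied to $\Inn(A_h)$ and the span of the $\Delta_{s(x)}$. There one must verify that $\rho\Delta_{s}\rho^{-1}$ is again of the form $\Delta_{\tilde s}$, and that the intersection with $\Inn(A_h)$ is trivial; the latter follows from the uniqueness in Theorem \ref{nowicki_mdc1}.
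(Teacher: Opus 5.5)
Your proof is correct and follows the paper's argument. The paper likewise writes $d_1+d_2=\rho d_1\rho^{-1}+\rho d_2\rho^{-1}$, invokes the independence of the submodules to conclude $\rho d_i\rho^{-1}=d_i$, and takes the easy inclusion from the preceding remark; your rearrangement $\rho\cdot d_1-d_1=d_2-\rho\cdot d_2\in\mathcal{D}_1\cap\mathcal{D}_2=\{0\}$ makes explicit the step the paper leaves implicit, and your reading of ``independent'' as trivial intersection matches how the paper checks independence in Lemma \ref{lemadelta}.
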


\begin{proof}
Consider $\rho \in \Aut_{d_1+d_2}(R)$. 
Then 
$$d_1+d_2 = \rho (d_1+d_2)\rho^{-1} = \rho d_1\rho^{-1} + \rho d_2 \rho^{-1}.$$

Since $\mathcal{D}_1$ and $\mathcal{D}_2$ are two independent $\Aut(R)$-submodules, it holds that $\rho \, d_1 \, \rho^{-1} = d_1$ and $\rho \, d_2 \, \rho^{-1}=d_2$.
\end{proof}

\medbreak

For the next results, we assume that $\deg(h) = N\geq 1$, $w \in A_h \backslash \Bbbk$ and recall \eqref{aut_Ah}: $\Aut(A_{h})=\{\sigma_{r(x)}\circ\tau_{a} \ | \ h(ax)=a^Nh(x)\}$, where $r(x)\in\Bbbk[x]$ and $a\in\Bbbk^{*}$, $\sigma_{r(x)}, \tau_a \in \Aut(A_{h})$ are defined as  $\sigma_{r(x)}(x)=x$, $\sigma_{r(x)}(t)=t+r(x)$, $\tau_{a}(x) = ax$ and $\tau_{a}(t) = a^{N-1}t$.

\begin{lemma} \label{lemadelta}
Consider the subset 
$$\Delta(A_h) = \{\Delta_{s(x)} \ | \ s(x) \in \Bbbk[x] \textrm{ and } \deg(s(x)) < \deg(h(x)) \}$$
of $\der(A_h)$.
Then, $\Delta$ is an $\Aut(A_h)$-submodule of $\der(A_h)$.
Moreover, $\Inn(A_h)$ and $\Delta(A_h)$ are independent $\Aut(A_h)$-submodules of $\der(A_h)$.
\end{lemma}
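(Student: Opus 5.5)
The plan is to verify the two claims separately: first that conjugation by any automorphism sends $\Delta(A_h)$ into itself, by an explicit formula, and then that $\Inn(A_h)\cap\Delta(A_h)=\{0\}$, which is the independence needed to apply Lemma \ref{independ}.

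\textbf{Step 1: $\Delta(A_h)$ is a submodule.} The map $s\mapsto\Delta_{s(x)}$ is $\Bbbk$-linear, so $\Delta(A_h)$ is a $\Bbbk$-subspace of $\der(A_h)$. By \eqref{aut_Ah}, every $\rho\in\Aut(A_h)$ has the form $\rho=\sigma_{r(x)}\circ\tau_a$ with $h(ax)=a^Nh(x)$. I will compute $\rho\,\Delta_{s(x)}\,\rho^{-1}$ on the generators $x$ and $t$. Using $\rho^{-1}=\sigma_{-a^{N-1}r(a^{-1}x)}\circ\tau_{a^{-1}}$, we get
$$\rho^{-1}(x)=a^{-1}x, \qquad \rho^{-1}(t)=a^{1-N}t+q(x)$$
for some $q\in\Bbbk[x]$. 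Since $\Delta_{s(x)}$ vanishes on $\Bbbk[x]$, we obtain:
\begin{itemize}
\item[(a)] $\rho\Delta_{s(x)}\rho^{-1}(x)=0$;
\item[(b)] $\rho\Delta_{s(x)}\rho^{-1}(t)=\rho\bigl(a^{1-N}s(x)\bigr)=a^{1-N}s(ax)$.
\end{itemize}
Hence
$$\rho\cdot\Delta_{s(x)}=\Delta_{a^{1-N}s(ax)}.$$
Because $\deg\bigl(a^{1-N}s(ax)\bigr)=\deg(s)<\deg(h)$, the conjugate lies again in $\Delta(A_h)$.

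\textbf{Step 2: independence.} The submodule $\Inn(A_h)$ was already observed to be stable, so it remains to show $\Inn(A_h)\cap\Delta(A_h)=\{0\}$. In the square-free case this is exactly the uniqueness in Theorem \ref{nowicki_mdc1}. I would nevertheless give a direct argument valid for any $h$ with $\deg(h)\geq1$, since Section \ref{section5} will reuse it.

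Suppose $ad_w=\Delta_{s(x)}$ and write $w=\sum_{i=0}^n w_i(x)t^i$ with $w_n\neq0$.
\begin{itemize}
\item[(i)] \emph{$w$ lies in $\Bbbk[x]$.} From $[w,x]=0$ and Lemma \ref{lematix}, if $n\geq1$ then the coefficient of $t^{n-1}$ in $[w,x]$ is $n\,w_n(x)h(x)\neq0$. This is a contradiction, so $n=0$ and $w\in\Bbbk[x]$.
\item[(ii)] \emph{$s=0$.} By \eqref{comm_t_fx_eq}, $s(x)=[w,t]=-w'(x)h(x)$. If $w'\neq0$, this has degree at least $\deg(h)$, contradicting $\deg(s)<\deg(h)$. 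So $w'=0$, hence $s=0$ and the derivation is zero.
\end{itemize}

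\textbf{Step 3: the direct-sum form used in Lemma \ref{independ}.} Suppose $ad_w+\Delta_{s}=ad_{w'}+\Delta_{s'}$. Then $ad_{w-w'}=\Delta_{s'-s}$, and $\deg(s'-s)<\deg(h)$, so Step 2 gives $\Delta_{s'-s}=0$, i.e.\ $s=s'$ and $ad_w=ad_{w'}$. Thus the sum $\Inn(A_h)+\Delta(A_h)$ is direct and both summands are $\Aut(A_h)$-stable, which is the independence used in Lemma \ref{independ}.

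The only mildly delicate point is the leading-coefficient argument in Step 2(i), namely that the centralizer of $x$ is $\Bbbk[x]$. Lemma \ref{lematix} handles it, because the correction terms $h(x)u_i$ have $t$-degree strictly below $i-1$.
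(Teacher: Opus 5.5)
Your proposal is correct and follows essentially the same route as the paper: the conjugation formula $\rho\,\Delta_{s(x)}\,\rho^{-1}=\Delta_{a^{1-N}s(ax)}$, computed on $x$ and $t$, and then the intersection argument. In that argument, $ad_w=\Delta_{s(x)}$ forces $w\in\Bbbk[x]$ and $s(x)=-w'(x)h(x)$, so $s=0$ by degree. The one difference is that you justify the step ``$[w,x]=0\Rightarrow w\in\Bbbk[x]$'' with the leading-term computation from Lemma~\ref{lematix}. The paper asserts this step without comment in this proof, and only uses the same argument later, in Lemma~\ref{R_S}.
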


\begin{proof}
Let $\rho \in \Aut(A_h)$.
Then, there exist $a \in \Bbbk^*$ and $r(x) \in \Bbbk[x]$ such that $\rho=\sigma_{r(x)} \circ \tau_a$, and so $\rho^{-1} = \sigma_{-a^{N-1}r(a^{-1}x)} \circ \tau_{a^{-1}}$.
Note that, $\rho \, \Delta_{s(x)} \, \rho^{-1} (x) = 0$ and $\rho \, \Delta_{s(x)} \, \rho^{-1}(t) = a^{1-N}s(ax)$.
If one writes $s(x) = \sum_{i=0}^k b_ix^i$, where $k < N$, we get $a^{1-N}s(ax) = a^{1-N} \sum_{i=0}^k  b_i a^i x^i$, \emph{i.e.}, $\rho \Delta_{s(x)} \rho^{-1} = \Delta_{S(x)}$, where $S(x) = a^{1-N} \sum_{i=0}^k  b_i a^i x^i$.

Now we prove that $\Inn(A_h)$ and $\Delta(A_h)$ are independent submodules of $\der(A_h)$.
Indeed, let $d \in \Inn(A_h) \cap \Delta(A_h)$.
Then, there exist $w \in A_h$ and $s(x) \in \Bbbk[x]$ such that $d=ad_w$ and $d=\Delta_{s(x)}$.
Thus, 
$$0 = \Delta_{s(x)}(x) = d(x) = ad_w(x) = wx - xw,$$
that is $w \in \Bbbk[x]$.
On the other hand, 
$$s(x) = \Delta_{s(x)}(t) = d(t) = ad_w(t) = wt - tw.$$
Since $w=w(x) \in \Bbbk[x]$, it means $w(x)t = tw(x)+s(x)$ and  by \eqref{comm_t_fx_eq}, $w(x)t = w(x)t + w'(x)h(x) + s(x)$, \emph{i.e.}, $s(x) = -w'(x)h(x)$.
As $\deg(h(x))\geq 1$ and $\deg(s(x))<\deg(h(x))$, we conclude that $w \in \Bbbk$ and $s(x)=0$, and therefore $d=0$.
\end{proof}

The proof of Lemma \ref{lemadelta} also ensure that $\rho=\sigma_{r(x)}\circ\tau_a \in \Aut_{\Delta_{s(x)}}$ if and only if $s(x)=S(x)$, that is, $s(x) = a^{1-N}s(ax)$.
Then, it holds
 \begin{equation}\label{cond_s}
    \Aut_{\Delta_{s(x)}}(A_h)=\left\{\rho=\sigma_{r(x)}\circ\tau_a \in \Aut(A_h) \ | \ s(ax)=a^{N-1}s(x)\right\}.
\end{equation}

The previous lemmas lead to the following consequence.

\begin{corollary}\label{pro_grupoisotro_1}
     Let $\rho=\sigma_{r(x)}\circ\tau_a \in \Aut(A_h)$ and $D=ad_w+\Delta_{s(x)} \in \der(A_h)$.
    Then, 
\begin{equation*}
    \Aut_D(A_h) =\{\rho \in \Aut(A_h) \ | \ \rho(w)-w\in \Bbbk, \ s(ax)=a^{N-1}s(x)\}.
\end{equation*}
\end{corollary}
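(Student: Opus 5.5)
The plan is to assemble the statement directly from the three preceding results. Fix $D=ad_w+\Delta_{s(x)}$ with $w\in A_h$ and $\deg(s)<\deg(h)=N$, so that $ad_w\in\Inn(A_h)$ and $\Delta_{s(x)}\in\Delta(A_h)$. By Lemma \ref{lemadelta}, $\Inn(A_h)$ and $\Delta(A_h)$ are independent $\Aut(A_h)$-submodules of $\der(A_h)$. Lemma \ref{independ} then gives
\[
\Aut_D(A_h)=\Aut_{ad_w}(A_h)\cap\Aut_{\Delta_{s(x)}}(A_h).
\]
The reverse inclusion $\supseteq$ is trivial in any case, since commuting with both summands implies commuting with their sum.

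The first factor comes from Corollary \ref{eq_groupisot_interna}, which uses that the center of $A_h$ is $\Bbbk$: $\rho\in\Aut_{ad_w}(A_h)$ if and only if $\rho(w)-w\in\Bbbk$.

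The second factor comes from \eqref{cond_s}. Writing $\rho=\sigma_{r(x)}\circ\tau_a$ as in \eqref{aut_Ah}, one has $\rho\,\Delta_{s(x)}\,\rho^{-1}=\Delta_{a^{1-N}s(ax)}$. By uniqueness of the polynomial defining $\Delta$ (evaluate at $t$), $\rho\in\Aut_{\Delta_{s(x)}}(A_h)$ if and only if $a^{1-N}s(ax)=s(x)$, that is, $s(ax)=a^{N-1}s(x)$. Intersecting the two descriptions gives exactly the set in the statement.

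The only point requiring care is to make sure the independence argument in Lemma \ref{independ} is legitimate. The decomposition $\rho d_1\rho^{-1}+\rho d_2\rho^{-1}=d_1+d_2$ has $\rho d_i\rho^{-1}\in\mathcal{D}_i$ because each $\mathcal{D}_i$ is a submodule; for $\Delta(A_h)$ this uses that $\deg(a^{1-N}s(ax))=\deg(s)<N$. Independence (trivial intersection) then forces $\rho d_i\rho^{-1}-d_i\in\mathcal{D}_1\cap\mathcal{D}_2=0$. Beyond this check the corollary is immediate, so I expect no real obstacle.
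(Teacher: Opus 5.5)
Your proposal is correct and follows the paper's own route. The paper also derives this corollary by combining Lemma \ref{lemadelta} with Lemma \ref{independ}, which splits $\Aut_D(A_h)$ as $\Aut_{ad_w}(A_h)\cap\Aut_{\Delta_{s(x)}}(A_h)$, and then reads off the two factors from Corollary \ref{eq_groupisot_interna} and \eqref{cond_s}. Your explicit check that $\rho d_1\rho^{-1}-d_1=d_2-\rho d_2\rho^{-1}$ lies in $\mathcal{D}_1\cap\mathcal{D}_2=0$ spells out the step the paper's terse proof of Lemma \ref{independ} leaves implicit.
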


Therefore, the above result characterizes the isotropy groups for any derivation of $A_h$, where $h(x)$ is square free (see \cite[Thm. 10.1]{Nowicki} or Thm. \ref{nowicki_mdc1}).

\medbreak

Now, we are going to describe explicitly the group $\Aut_D(A_h)$, where $D=ad_w + \Delta_{s(x)}$.
Since $\Aut_D(A_h) \subseteq \Aut(A_h)$, we already know by Corollary \ref{carac_Aut} that if $h(x) \neq x^N$, $N \geq 1$,  then
$\Aut_D(A_h) \subseteq \Aut(A_h) = \Bbbk[x] \rtimes \tau_{\mathbb{P}}$,
where $\tau_{\mathbb{P}}$ is a finite subgroup of $\Bbbk^\ast$ (in particular, it is a finite cyclic group). 
On the other hand, Corollary \ref{carac_Aut} also ensures that if $h(x)=x^N, N \geq 1$, then $\Aut(A_h)=\Bbbk[x] \rtimes \Bbbk^\ast$.
    Thus, in this latter case, the condition $s(ax)=a^{N-1} s(x)$ in \eqref{cond_s} gives rise to two cases:
    \begin{itemize}
        \item[i)] if $s(x)= c x^{N-1}$, $c \in \Bbbk$, then $\Aut_{\Delta_{s(x)}}(A_h) = \Aut(A_h) = \Bbbk[x] \rtimes \Bbbk^\ast;$
        \item[ii)] if $s(x) \neq c x^{N-1}$, then $\Aut_{\Delta_{s(x)}}(A_h) = \Bbbk[x] \rtimes \mathbb{G}_n \subsetneq \Aut(A_h) = \Bbbk[x] \rtimes \Bbbk^\ast$, for a determined $n \geq 1$, as in Lemma \ref{lemma_raizdaunidade}.
    \end{itemize}

By the above, $\Aut_{\Delta_{s(x)}}(A_h) \subseteq \Bbbk[x] \rtimes \mathbb{G}_n$, for some $n \geq 1$, in most cases.
In fact, this does not hold only if $h(x)=x^N$ and $s(x) = cx^{N-1}$, $N \geq 1$.

\medbreak

Now, we shall see that, if $N \geq 2$, then for $D=ad_w+\Delta_{s(x)}$ with $w \in A_h\backslash \Bbbk$, it holds that $\Aut_D(A_h) \subseteq \Bbbk[x] \rtimes \mathbb{G}_n$, for some $n \geq 1$, even though $h(x) = x^N$ (and so $\tau_{\mathbb{P}} = \Bbbk^\ast$).
In particular, in this case, $\Aut_D(A_h) \subsetneq \Aut(A_h)$.
We prove this below and the case $h(x)=x$ will be considered separately later (see Proposition \ref{lema_finito_h_igual_x}).

\begin{lemma}\label{lema_finito_hmaior2}
    Let $D=ad_w+\Delta_{s(x)}$ be a derivation of $A_h$, where $\deg(h)\geq 2$ and $w \in A_h\backslash \Bbbk$.
    Then,  $\Aut_D(A_h) \subseteq \Bbbk[x] \rtimes \mathbb{G}_n$, for some $n \geq 1$.
\end{lemma}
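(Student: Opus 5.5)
By Corollary \ref{pro_grupoisotro_1}, every $\rho=\sigma_{r(x)}\circ\tau_a\in\Aut_D(A_h)$ satisfies $\rho(w)-w\in\Bbbk$. The plan is to show that this condition alone forces $a$ to be a root of unity of bounded order, independent of $r$. If $\tau_{\mathbb P}$ is already finite, i.e.\ $h\neq x^N$, Corollary \ref{carac_Aut} gives the claim directly. So the real case is $h=x^N$ with $N\ge 2$, although the argument below does not need this. Write $w=\sum_{i=0}^n w_i(x)t^i$ with $w_n\neq 0$. By Lemma \ref{lema_rhotnai},
\[
\rho(t^i)=\bigl(a^{N-1}t+r(x)\bigr)^i=a^{i(N-1)}t^i+(\text{terms of lower $t$-degree}),
\]
so $\rho(w)=\sum_i w_i(ax)\,\rho(t)^i$ has leading $t$-coefficient $a^{n(N-1)}w_n(ax)$.

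First suppose $n\ge 1$. Since $\rho(w)-w\in\Bbbk$, the top coefficients must agree: $w_n(ax)=a^{-n(N-1)}w_n(x)$. Applying Lemma \ref{lemma_raizdaunidade} with $f=w_n$ and $\ell=-n(N-1)$ gives $a\in\mathbb G_{|\deg(w_n)+n(N-1)|}$. Because $N\ge 2$ and $n\ge1$, this index is at least $n(N-1)\ge 1$. Hence $a\in\mathbb G_m$ for the fixed integer $m=\deg(w_n)+n(N-1)$, which depends only on $w$. This is precisely where $N\ge2$ is used: for $N=1$ and $w_n$ constant the index would be $0$, and $a$ would be unconstrained.

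Next suppose $n=0$, so $w=w_0(x)\in\Bbbk[x]\setminus\Bbbk$. Then $\rho(w)=w_0(ax)$, and the condition $w_0(ax)-w_0(x)\in\Bbbk$ means that the coefficients of degree $\ge1$ agree. In particular the leading coefficient gives $a^{\deg w_0}=1$ with $\deg w_0\ge1$, so $a\in\mathbb G_{\deg w_0}$.

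In either case the set of admissible $a$ is contained in some $\mathbb G_m$ with $m\ge 1$. Since $\Aut_D(A_h)$ is a subgroup, the $a$'s it contains form a subgroup of $\mathbb G_m$, which is $\mathbb G_{n'}$ for some $n'\mid m$. Therefore $\Aut_D(A_h)\subseteq\Bbbk[x]\rtimes\mathbb G_{n'}$. The only delicate point is the leading-term computation in the first case, and Lemma \ref{lema_rhotnai} handles it, because the factor $a^{N-1}$ multiplies $t$ and contributes $a^{i(N-1)}$ to the leading coefficient of $\rho(t^i)$.
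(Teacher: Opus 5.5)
Your proof is correct and is essentially the paper's argument. When $\deg_t(w)\ge 1$ you compare the top $t$-coefficients in $\rho(w)=w+c$ and apply Lemma~\ref{lemma_raizdaunidade}; when $w\in\Bbbk[x]\setminus\Bbbk$ you compare leading $x$-coefficients. One harmless slip: $\rho(t)=\sigma_{r(x)}(a^{N-1}t)=a^{N-1}\bigl(t+r(x)\bigr)$, not $a^{N-1}t+r(x)$, but this does not change the leading coefficient $a^{i(N-1)}$ of $\rho(t)^i$.
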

\begin{proof}
By Corollary \ref{carac_Aut}, it is sufficient to analyze the case $h(x)=x^N$, $N\geq2$.
     Considerer $\rho = \sigma_{r(x)} \circ \tau_a \in \Aut_D(A_h)$
and write $w=\sum_{i=0}^\ell w_i(x) t^i$.
By Corollary \ref{pro_grupoisotro_1}, it holds $\rho(w)=w+c$, for some $c \in \Bbbk$.

To prove the result, we need to consider two cases.
First, if $\ell\geq 1$, then
\begin{align*}
& \rho(w) = w + c  \\
\Longleftrightarrow & \rho\left(\sum_{i=0}^\ell w_i(x) t^i \right) = \sum_{i=0}^\ell w_i(x) t^i + c \\
\Longleftrightarrow & \sum_{i=0}^\ell \rho(w_i(x)) \left(\rho(t)\right)^i = \sum_{i=0}^\ell w_i(x) t^i + c \\
\Longleftrightarrow & \sum_{i=0}^\ell w_i(ax)a^{i(N-1)}(t+r(x))^i = \sum_{i=0}^\ell w_i(x) t^i + c \\
\stackrel{Lem. \ref{lema_rhotnai}}{\Longleftrightarrow} & \left(\sum_{i=1}^\ell a^{i(N-1)} w_i(ax) (t^i+ir(x)t^{i-1}+g_i) \right)  + w_0(ax) \\
& = \sum_{i=1}^\ell w_i(x) t^i + w_0(x) + c.
\end{align*}
Thus, comparing the coefficients of the term of degree in $t$ equal to $\ell$ (the greatest one), we obtain
$a^{\ell(N-1)}w_\ell (ax)=w_{\ell}(x)$.
Since $N\geq 2$ and $\ell \geq 1$, by Lemma \ref{lemma_raizdaunidade} we get $a\in \mathbb{G}_n$, for some $n \geq 1$.

We consider now the case $\ell=0$.
Thus $w = w_0(x) \in \Bbbk[x]$.
Since $w \in A_h\backslash \Bbbk$, write $w=\sum_{i=0}^k c_ix^i$, $k \geq 1$ and $c_k\neq 0$, and then
    \begin{eqnarray*}
        \rho(w) = w + c 
        &\Longleftrightarrow & \sum_{i=0}^k c_i(a^i-1)x^i=c.
        %\\
        %&\Longleftrightarrow& c_i(a^i-1)=0, \forall i\geq 1 \ \mbox{and} \ c=0\\
        %&\Longleftrightarrow& a\in\cap_{i=1, c_i\neq 0}^{s}\mathbb{G}_{i} \ \mbox{and} \ c=0.
    \end{eqnarray*}
    
    Thus, in particular $a^k-1 =0$, that is, $a \in \mathbb{G}_k$. Therefore, $\Aut_D(A_h) \subseteq \Bbbk[x] \rtimes \mathbb{G}_n$, for some $n \geq 1$.
\end{proof}

For $h(x)=x$, we have the next result.
\begin{proposition}\label{lema_finito_h_igual_x}
    Consider $h(x)=x$ and $D=ad_w+\Delta_{s(x)} \in \der(A_h)$. 
    If $w=\sum_{i=0}^\ell w_i(x) t^i\in A_h\backslash \Bbbk$, where $\deg(w_\ell(x)) \geq 1$, then  $\Aut_D(A_h) \subseteq \Bbbk[x] \rtimes \mathbb{G}_n$, for some $n \geq 1$.
\end{proposition}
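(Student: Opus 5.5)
Here $N=1$, so the plan is to reuse the leading-coefficient argument from Lemma \ref{lema_finito_hmaior2}. The difference is that $a^{\ell(N-1)}=1$, so the power of $a$ must now come from $w_\ell$ itself, which is why the hypothesis $\deg(w_\ell)\geq 1$ is needed.

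For $h(x)=x$, Corollary \ref{carac_Aut} gives $\Aut(A_h)=\Bbbk[x]\rtimes\Bbbk^\ast$, with $\tau_a(x)=ax$ and $\tau_a(t)=a^{0}t=t$. Take $\rho=\sigma_{r(x)}\circ\tau_a\in\Aut_D(A_h)$. By Corollary \ref{pro_grupoisotro_1} we have $\rho(w)=w+c$ for some $c\in\Bbbk$. Expanding,
\[
\rho(w)=\sum_{i=0}^\ell w_i(ax)\,(t+r(x))^i .
\]
By Lemma \ref{lema_rhotnai}, the term of highest $t$-degree is $w_\ell(ax)t^\ell$, and every other contribution has $t$-degree at most $\ell-1$.

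Now compare coefficients of $t^\ell$ in $\rho(w)=w+c$, using the uniqueness of the normal form $\sum g_i(x)t^i$. If $\ell\geq1$, the constant $c$ does not affect this coefficient, so $w_\ell(ax)=w_\ell(x)$. If $\ell=0$, then $w=w_0(x)$ is nonconstant by hypothesis and $w_0(ax)=w_0(x)+c$. In both cases, write $w_\ell(x)=\sum_j c_jx^j$ with $k=\deg(w_\ell)\geq1$ and $c_k\neq0$. Comparing the coefficients of $x^k$ gives $c_ka^k=c_k$ (any constant $c$ only enters the degree-zero coefficient). Hence $a^k=1$, which is Lemma \ref{lemma_raizdaunidade} with exponent $0$ applied to the monomial of top degree. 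Thus $a\in\mathbb{G}_k$, and $\Aut_D(A_h)\subseteq\Bbbk[x]\rtimes\mathbb{G}_n$ with $n=k=\deg(w_\ell)$.

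The only step needing care is checking that no lower-order term disturbs the $t^\ell$ coefficient. This holds because $\deg_t(g_i)\leq i-2$ and $\deg_t\big(i\,r(x)t^{i-1}\big)=i-1<\ell$ for every $i\leq\ell$. One should also check that commuting $w_i(ax)$ past these terms causes no trouble; it does not, since $w_i(ax)$ already appears on the left of each term. The rest is routine.
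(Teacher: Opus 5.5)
Your proof is correct and takes the same approach as the paper. The paper likewise reuses the proof of Lemma \ref{lema_finito_hmaior2}: comparing the top $t$-degree coefficients of $\rho(w)=w+c$ gives $w_\ell(ax)=w_\ell(x)$ when $\ell\geq 1$ (or $w_0(ax)=w_0(x)+c$ when $\ell=0$), and the leading $x$-coefficient then forces $a\in\mathbb{G}_{\deg(w_\ell)}$.
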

\begin{proof}
By the proof of Lemma \ref{lema_finito_hmaior2}, if $\ell = 0$, then we directly obtain $w =w(x) \in \Bbbk[x]$ and $a \in \mathbb{G}_{\deg(w(x))}$; otherwise, if $\ell \geq 1$, we have that $w_\ell$ must satisfy $w_\ell(ax) = w_\ell(x)$, and so by Lemma \ref{lemma_raizdaunidade} we conclude that $a \in \mathbb{G}_{\deg(w_\ell(x))}$.
\end{proof}

We would like to note that, by the two results above, we always have $\Aut_D (A_h) \subseteq \Bbbk[x] \rtimes \mathbb{G}_n$, except perhaps if $h(x)=x$ and $w=\sum_{i=0}^\ell w_i(x) t^i\in A_h\backslash \Bbbk$ with $w_\ell(x) \in \Bbbk$.
In fact, in this case, this can happen or not. For instance, see the next example.

\begin{example}\label{example_w}
Let $h(x)=x$, $w=\gamma t^2 + (\alpha x+\beta)t + p(x)$, with $\gamma\in \Bbbk^*$, $\alpha,\beta\in \Bbbk$ and $p(x)\in \Bbbk[x]$, $D=ad_w+\Delta_{s(x)} \in \der(A_h)$ and $\rho=\sigma_{r(x)}\circ\tau_a\in \Aut(A_h)$.
Applying Corollary \ref{pro_grupoisotro_1} and Lemma \ref{lema_rhotnai} we obtain necessary and sufficient
conditions for $\rho\in \Aut_D(A_h)$:
\begin{equation}\label{star1}
r(x)=\dfrac{\alpha(1-a)}{2\gamma} \, x
\end{equation}
and
\begin{equation}\label{star2}
\gamma(r'(x)x+r(x)^2)+(\alpha a x+\beta)r(x)+p(ax)=p(x)+c, \ \ \ \  c\in \Bbbk.
\end{equation}

Let
\[
p(x)=\frac{\alpha^2 x^2 + 2\alpha(\beta+\gamma)x + \nu}{4\gamma},\qquad \nu\in \Bbbk.
\]
Thus, substituting \eqref{star1} into \eqref{star2}, the terms in
$x^2$ and $x$ cancel identically and we obtain $p(ax)-p(x)=0$, so that
\eqref{star1} holds with $c=0$ for any $a\in \Bbbk^*$.
Hence, for this choice of $p(x)$, we have $\sigma_{r(x)}$ satisfies $\sigma_{r(x)}(w)=w$, and so $\sigma_{r(x)} \circ \tau_a \in \Aut_{D}(A_h)$, for any $a \in \Bbbk^*$.

Let $\alpha=0$ and $p(x)=x^n$ with $n\geq 1$. Then \eqref{star1} gives
$r(x)=0$ and \eqref{star2} reduces to $p(ax)-p(x)=(a^n-1)x^n = c\in k$; \emph{i.e.}, $a^n=1$.
Thus, the admissible parameters are precisely the elements of the finite cyclic group $
 \mathbb{G}_n \subsetneq \Bbbk^*$.

If $\alpha\neq0$ but $p(x)$ is constant, comparing coefficients in
\eqref{star2} forces $1-a^2=0$, \emph{i.e.}, \ $a=\pm1$. The value $a=-1$ is admissible
only if the linear component is also compatible (for instance, $\beta=-\gamma$), and when $n$ is even in the case $p(x)=x^n$, then $a=-1$
may be allowed.

In contrast to the case $\deg (h) \ge 2$, here the multiplicative component of the isotropy group depends crucially on the choice of $(\alpha,\beta,p(x))$, and it can range from the trivial subgroup $\{1\}$, to any finite cyclic group $\mathbb{G}_n$ and even $\Bbbk^*$.

\end{example}

\medbreak

Actually, by the proof of Lemma \ref{lema_finito_hmaior2}, even if $\deg(h(x))=1$, we have the following result. 

\begin{proposition} \label{propwx}
    Consider the derivation $D=ad_w+\Delta_{s(x)}$ of $A_h$, where $\deg(h)\geq 1$ and $w = \sum_{i=0}^k c_ix^i \in \Bbbk[x]$.
    Then, $\rho \in \Aut_D(A_h)$ if and only if $\rho(w)=w$.
    Moreover, if $w \in \Bbbk[x] \backslash \Bbbk$, then $\Aut_D(A_h) = \Bbbk[x] \rtimes \mathbb{G}_n$, for some $n \geq 1$.
  \end{proposition}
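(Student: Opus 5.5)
By Corollary \ref{pro_grupoisotro_1}, an automorphism $\rho=\sigma_{r(x)}\circ\tau_a$ lies in $\Aut_D(A_h)$ if and only if two conditions hold: $\rho(w)-w\in\Bbbk$ and $s(ax)=a^{N-1}s(x)$. The first step is to compute $\rho(w)$ for $w=\sum_{i=0}^k c_ix^i\in\Bbbk[x]$. Since $\sigma_{r(x)}$ fixes $x$, this gives $\rho(w)=w(ax)=\sum_{i=0}^k c_ia^ix^i$. Hence $\rho(w)-w=\sum_{i=1}^k c_i(a^i-1)x^i$, which has zero constant term. So $\rho(w)-w\in\Bbbk$ forces $\rho(w)-w=0$, i.e. $\rho(w)=w$. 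This settles the inner part of the ``if and only if''. The condition on $s$ must still be carried along. I would argue that the statement is meant with the isotropy of $\Delta_{s(x)}$ built in via \eqref{cond_s}; otherwise I would phrase the first claim as $\rho\in\Aut_{ad_w}(A_h)$ iff $\rho(w)=w$, and then intersect with \eqref{cond_s}.

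For the ``moreover'' part, take $w\in\Bbbk[x]\setminus\Bbbk$, so that $k\ge 1$ and $c_k\ne 0$. By Lemma \ref{lemma_raizdaunidade} with $\ell=0$ and $f=w-c_0$, the equality $w(ax)=w(x)$ holds iff $a\in\bigcap_{i\in\mathcal{S}_w\setminus\{0\}}\mathbb{G}_i=\mathbb{G}_{m}$, where $m=\gcd\{i\ge1 : c_i\neq0\}$. The key observation is that this condition involves only $a$ and never $r(x)$. Every $\sigma_{r(x)}$ therefore survives, and the isotropy group has the form $\{\sigma_{r(x)}\circ\tau_a : r\in\Bbbk[x],\ a\in H\}$. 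Here
\[
H=\mathbb{G}_m\cap\tau_{\mathbb{P}}\cap\{a\in\Bbbk^\ast : s(ax)=a^{N-1}s(x)\}.
\]

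Next I check that $H$ is a cyclic group. Each of the three sets is a subgroup of $\Bbbk^\ast$: for the third, this follows by applying Lemma \ref{lemma_raizdaunidade} with $\ell=N-1$, which turns the condition into membership in an intersection of groups $\mathbb{G}_{|i-(N-1)|}$. Since $H$ sits inside the finite group $\mathbb{G}_m$, it is a finite subgroup of $\Bbbk^\ast$, hence cyclic, so $H=\mathbb{G}_n$ for some $n\ge1$ dividing $m$. Finally, $\{\sigma_{r(x)}\}\cong\Bbbk[x]$ is normal in $\Aut(A_h)$, using $\tau_a\sigma_{r(x)}\tau_a^{-1}=\sigma_{a^{1-N}r(ax)}$. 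Therefore $\Aut_D(A_h)=\Bbbk[x]\rtimes\mathbb{G}_n$.

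The only delicate point is bookkeeping of the $\Delta_{s(x)}$-condition, together with the observation that no relation between $r$ and $a$ arises. The latter is what makes the group split as a full $\Bbbk[x]$ times a cyclic group, in contrast to Example \ref{example_w}.
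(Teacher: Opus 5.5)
Your proposal is correct and follows essentially the paper's route: the paper simply points back to the $\ell=0$ case in the proof of Lemma \ref{lema_finito_hmaior2}. There, $\rho(w)=w(ax)$ does not involve $r(x)$, the constant term forces $\rho(w)-w=0$, and Lemma \ref{lemma_raizdaunidade} leaves only a root-of-unity condition on $a$, so every $\sigma_{r(x)}$ survives. Your explicit intersection with $\tau_{\mathbb{P}}$ and with the condition $s(ax)=a^{N-1}s(x)$ from \eqref{cond_s} is in fact needed, because the first equivalence as literally stated omits it: for $h=x^2-1$, $w=x^2$ and $s(x)=1$, the automorphism $\tau_{-1}$ fixes $w$ but does not commute with $D$.
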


\begin{remark}
The above proposition has two interesting facts.
The first is that, if $w \in \Bbbk[x]$, then the automorphisms that are in $\Aut_D(A_h)$ are those whose $w$ is a fixed point.
The second is that, for $w \in \Bbbk[x] \backslash \Bbbk$, even if $h(x)=x^N$ and $s(x)=cx^{N-1}, c \in \Bbbk$, we obtain $\Aut_D(A_h) = \Bbbk[x] \rtimes \mathbb{G}_n \subsetneq \Bbbk[x] \rtimes \Bbbk^\ast = \Aut(A_h)$, where $n = gcd(i_1,i_2,..., i_j)$ being $\mathcal{S}_w = \{i_1,i_2,..., i_j\}$.
\end{remark}

\medbreak

Next, we work to get a better description for the isotropy groups given in Corolary \ref{pro_grupoisotro_1}: in fact, we shall prove that, in some cases, $\rho \in \Aut_D(A_h)$ if and only if $w$ is a fixed point for $\rho$, more than $\rho (w) - w \in \Bbbk$, as in the above proposition.
The fact that there is a fixed point plays an interesting role in the case of polynomial algebras: for a simple derivation $d \in \der(\Bbbk[x_1,\ldots,x_n])$, if $\rho \in \Aut_d(\Bbbk[x_1,\ldots,x_n])$ has a fixed point, and so an invariant maximal ideal, then $\rho = \rm{id}$ (see \cite[Prop. 7]{B2016}). 

First, we prove that such a condition is equivalent for the automorphism $\tau_a \in \Aut(A_h)$.
  \begin{proposition}\label{rho_eq_tau}
     Consider $w \in A_h$, $\deg(h)\geq 1$, and $\tau_a \in \Aut(A_h)$, for some $a \in \Bbbk^\ast$.
    Then $\tau_a(w)-w \in \Bbbk \Longleftrightarrow \tau_a(w)=w$.
    \end{proposition}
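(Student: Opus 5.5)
The plan is to exploit the fact that $\tau_a$ acts diagonally on the standard basis $\{x^jt^i\}$ of $A_h$. By definition $\tau_a(x)=ax$ and $\tau_a(t)=a^{N-1}t$, so for every monomial we have $\tau_a(x^jt^i)=a^{j+i(N-1)}x^jt^i$. Writing $w=\sum_{i=0}^\ell w_i(x)t^i=\sum_{i,j}c_{ij}x^jt^i$ in the unique normal form, we get
\[
\tau_a(w)-w=\sum_{i,j}c_{ij}\bigl(a^{j+i(N-1)}-1\bigr)x^jt^i .
\]
No reordering is needed, because $\tau_a$ is an algebra map and each monomial $x^jt^i$ is already in normal form. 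In the notation of the paper, this says $\tau_a(w)=\sum_i a^{i(N-1)}w_i(ax)t^i$, which is the computation in Lemma \ref{lema_finito_hmaior2} with $r=0$.

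The implication $\tau_a(w)=w\Rightarrow\tau_a(w)-w\in\Bbbk$ is trivial. For the converse, suppose $\tau_a(w)-w=c\in\Bbbk$. By uniqueness of the normal form, we compare the coefficient of the basis element $x^0t^0=1$ on both sides. On the left this coefficient is $c_{00}(a^{0}-1)=0$, since the monomial $1$ has weight $0$ and is therefore fixed by $\tau_a$. On the right it is $c$. Hence $c=0$ and $\tau_a(w)=w$.

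The only point needing care, and the main one to state precisely, is the uniqueness of the decomposition $g=\sum_i g_i(x)t^i$ recalled in Section \ref{section2}. This uniqueness is what lets us compare coefficients, and it holds because $A_h$ is an Ore extension with PBW basis $\{x^jt^i\}$. Beyond that, the argument is a one-line weight (grading) argument: the automorphism $\tau_a$ preserves the $\Bbbk$-grading by monomials and fixes the degree-zero component, so the difference $\tau_a(w)-w$ has no constant term.
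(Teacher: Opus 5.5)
Your proof is correct and follows the paper's argument: the paper also writes $\tau_a(w)=\sum_i a^{i(N-1)}w_i(ax)t^i$, compares $t^0$-coefficients to get $w_0(ax)=w_0(x)+c$, and then compares constant terms to conclude $c=0$, which is your comparison of the coefficient of $x^0t^0$. Your phrase ``grading by monomials'' is loose, since the relation $tx-xt=h$ need not be homogeneous, but the argument only uses that $\tau_a$ acts diagonally on the basis $\{x^jt^i\}$, and that is true.
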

    \begin{proof}
Let $c \in \Bbbk$ and write $w=\sum_{i=0}^\ell w_i(x) t^i$.
Then 
\begin{align*}
& \tau_a(w) = w + c  \\
\Longleftrightarrow & \tau_a\left(\sum_{i=0}^\ell w_i(x) t^i \right) = \sum_{i=0}^\ell w_i(x) t^i + c \\
\Longleftrightarrow & \sum_{i=0}^\ell \tau_a(w_i(x)) \left(\tau_a(t)\right)^i = \sum_{i=0}^\ell w_i(x) t^i + c \\
\Longleftrightarrow & \sum_{i=0}^\ell a^{i(N-1)} w_i(ax) t^i = \sum_{i=0}^\ell w_i(x) t^i + c.
\end{align*}
Thus, comparing the coefficients of the term of degree in $t$ equal to zero, we obtain
$w_0 (ax)=w_0(x)+c$.
%If $w_0(x)=0$, then $c=0$. Otherwise, 
Write $w_0(x)=\sum_{i=0}^{m}c_ix^i$.
Thus $w_0 (ax)=w_0(x)+c $ if and only if $\sum_{i=0}^{m}c_i a^i x^i = \sum_{i=0}^{m}c_ix^i + c$, and again, analyzing the coefficient of degree zero, we obtain $c_0 = c_0+c$, and so $c=0$.
\end{proof}

Now, we investigate the situation for a general $\rho=\sigma_{r(x)} \circ \tau_a \in \Aut_D(A_h)$.

\begin{proposition}\label{prop_rhowigualaw}
Let $D=ad_w+\Delta_{s(x)}$ be a derivation of $A_h$, $\deg(h)\geq 1$, where $\deg_{t}(w)\geq 2$, and $\rho \in \Aut_D(A_h)$.
If $\Aut_D(A_h) \subseteq \Bbbk[x] \rtimes \mathbb{G}_n$, $n \geq 1$, then $\rho(w)-w\in \Bbbk$ if and only if $\rho(w) = w$.
\end{proposition}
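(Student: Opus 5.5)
The plan is to exploit the finiteness of the multiplicative part: a suitable power of $\rho$ is a pure translation $\sigma_{R(x)}$, and a translation can fix $w$ only up to a constant if it is the identity. The implication $\rho(w)=w \Rightarrow \rho(w)-w\in\Bbbk$ is trivial, so only the converse needs work. Suppose $\rho=\sigma_{r(x)}\circ\tau_a\in\Aut_D(A_h)$ satisfies $\rho(w)=w+c$ with $c\in\Bbbk$; the goal is $c=0$. By hypothesis $a\in\mathbb{G}_n$, so $a^n=1$.

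First I compute the powers of $\rho$. Since $\rho$ fixes constants, induction gives $\rho^k(w)=w+kc$ for all $k\geq 1$. In particular $\rho^n(w)=w+nc$. By Lemma \ref{rho_n}, $\rho^n=\sigma_{R_n(x)}\circ\tau_{a^n}=\sigma_{R_n(x)}$, where $R_n(x)=\sum_{i=0}^{n-1}a^{i(1-N)}r(a^ix)$. So the problem reduces to showing that a translation $\sigma_{R(x)}$ with $\sigma_{R(x)}(w)-w\in\Bbbk$ must have $R=0$.

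For this I write $w=\sum_{i=0}^{\ell}w_i(x)t^i$ with $\ell=\deg_t(w)\geq 2$ and $w_\ell\neq 0$. By Lemma \ref{lema_rhotnai},
$$\sigma_{R(x)}(w)=\sum_{i=0}^{\ell}w_i(x)\bigl(t^i+iR(x)t^{i-1}+g_i\bigr),\qquad \deg_t(g_i)\leq i-2.$$
Each product $w_i(x)R(x)t^{i-1}$ is already in the normal form $\sum f_j(x)t^j$, so I can compare coefficients of $t^{\ell-1}$ on both sides of $\sigma_{R(x)}(w)=w+nc$. The terms $g_i$ cannot contribute there, since their $t$-degree is at most $\ell-2$. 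The comparison gives $w_{\ell-1}+\ell\,w_\ell R=w_{\ell-1}$. The constant $nc$ lies in degree $0\leq\ell-2<\ell-1$, so it does not interfere; this is where $\deg_t(w)\geq 2$ is convenient. Since $\Bbbk[x]$ is a domain, $w_\ell\neq0$, and $\operatorname{char}\Bbbk=0$, this forces $R_n=0$.

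Hence $\rho^n=\mathrm{id}$, so $w=\rho^n(w)=w+nc$, and therefore $nc=0$, i.e.\ $c=0$ because $\operatorname{char}\Bbbk=0$. The only delicate step is the coefficient comparison: one must check that the lower-order remainders $g_i$ and the constant $c$ never reach the $t^{\ell-1}$ slot, which Lemma \ref{lema_rhotnai} and $\ell\geq 2$ guarantee. Everything else is bookkeeping with Lemma \ref{rho_n}.
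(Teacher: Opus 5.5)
Your proposal is correct and follows essentially the same route as the paper. You use Lemma~\ref{rho_n} and $a^n=1$ to get $\rho^n=\sigma_{R_n(x)}$ with $\sigma_{R_n(x)}(w)=w+nc$, compare the coefficients of $t^{\ell-1}$ via Lemma~\ref{lema_rhotnai} to force $R_n=0$, and conclude $\rho^n=\mathrm{id}$ and $nc=0$. Your explicit remark that $\ell\geq 2$ keeps the constant $nc$ out of the $t^{\ell-1}$ slot makes precise a point the paper leaves implicit.
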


\begin{proof}
    Consider the exact sequence of groups
    $$\Bbbk[x] \stackrel{\iota}{\longrightarrow} \Bbbk[x] \rtimes \mathbb{G}_{n} \stackrel{\pi}{\longrightarrow} \mathbb{G}_{n}$$
where $\iota(r(x)) = \sigma_{r(x)}$ and $\pi(\sigma_{r(x)} \circ \tau_a) = \tau_a$.
In particular, for $\rho = \sigma_{r(x)} \circ \tau_a \in \Aut_D(A_h) \subseteq \Bbbk[x] \rtimes \mathbb{G}_{n}$, it holds $\rho^n = \sigma_{R_n(x)} \circ \tau_{a^n} = \sigma_{R_n(x)}$ by Lemma \ref{rho_n}, where $R_n(x) = \sum_{i=0}^{n-1} a^{i(1-N)} r(a^i x)$.
If $\rho(w)=w+c$, for some $c \in \Bbbk$, then $\rho^i(w) = w + i c$, and so $ \rho^n(w) =  w + n c$  if and only if $\sigma_{R_n(x)}(w) = w + nc.$

Write $w=\sum_{i=0}^\ell w_i(x) t^i$, $\ell \geq 2$.
Then 
\begin{align*}
& \sigma_{R_n(x)}(w) = w + nc  \\
    \Longleftrightarrow & \sigma_{R_n(x)}\left(\sum_{i=0}^\ell w_i(x) t^i \right) = \sum_{i=0}^\ell w_i(x) t^i + nc \\
\Longleftrightarrow & \sum_{i=0}^\ell w_i(x) \left(\sigma_{R_n(x)}(t)\right)^i = \sum_{i=0}^\ell w_i(x) t^i + nc \\
\Longleftrightarrow & \sum_{i=0}^\ell w_i(x) (t+R_n(x))^i = \sum_{i=0}^\ell w_i(x) t^i + nc \\
\stackrel{Lem. \ref{lema_rhotnai}}{\Longleftrightarrow} & \left(\sum_{i=1}^\ell w_i(x) \left(t^i+iR_n(x)t^{i-1}+g_i \right) \right)  + w_0(x) \\
& = \sum_{i=1}^\ell w_i(x) t^i + w_0(x) + nc.
\end{align*}
The coefficient of the term of degree in $t$ equal to $\ell-1$ gives us the following relation:
$w_{\ell-1}(x) + \ell R_n(x) = w_{\ell -1} (x)$.
Thus, $R_n(x)=0$ and so $\rho^n=id$.
Consequently $c=0$, and so $\rho(w)=w$.
\end{proof}

In light of Lemma \ref{lema_finito_hmaior2} and Proposition \ref{prop_rhowigualaw}, we obtain the following results. 

\begin{corollary}
Let $\deg(h)\geq 2$ and $\rho \in \Aut_D(A_h)$, where $D=ad_w+\Delta_{s(x)}$.
Then $\rho(w)-w\in \Bbbk$ if and only if $\rho(w) = w$.
\end{corollary}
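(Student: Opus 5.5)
The plan is to reduce to the preceding results by splitting according to $\ell=\deg_t(w)$, after disposing of trivial cases. If $w\in\Bbbk$, then $\rho(w)=w$ for every $\rho$, so both conditions hold. The implication ``$\rho(w)=w\Rightarrow\rho(w)-w\in\Bbbk$'' is always trivial. So in every case I only need to show that $\rho(w)=w+c$ with $c\in\Bbbk$ forces $c=0$, for $\rho=\sigma_{r(x)}\circ\tau_a\in\Aut_D(A_h)$.

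\emph{Case $\ell=0$} ($w\in\Bbbk[x]\setminus\Bbbk$). Here $\sigma_{r(x)}$ fixes $x$, so $\rho(w)=w(ax)$. Comparing constant terms in $w(ax)=w(x)+c$, exactly as in Proposition \ref{rho_eq_tau}, gives $c=0$. This is also the content of Proposition \ref{propwx}.

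\emph{Case $\ell\geq 2$.} Since $\deg(h)\geq 2$ and $w\notin\Bbbk$, Lemma \ref{lema_finito_hmaior2} gives $\Aut_D(A_h)\subseteq\Bbbk[x]\rtimes\mathbb{G}_n$ for some $n\geq 1$. Proposition \ref{prop_rhowigualaw} then applies verbatim: it uses $\rho^n=\sigma_{R_n(x)}$ from Lemma \ref{rho_n} and reads off the $t^{\ell-1}$ coefficient, which yields $R_n=0$ and hence $c=0$.

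\emph{Case $\ell=1$.} Write $w=w_1(x)t+w_0(x)$. I would rerun the argument of Proposition \ref{prop_rhowigualaw} at the $t^0$ level. Again $\rho^n=\sigma_{R_n(x)}$ and $\rho^n(w)=w+nc$. Since $\sigma_{R_n(x)}(w)=w+w_1(x)R_n(x)$, we get $w_1(x)R_n(x)=nc$.
\begin{itemize}
\item If $\deg w_1\geq 1$, degree comparison forces $R_n=0$, hence $c=0$.
\item If $w_1=\gamma\in\Bbbk^*$, this step is the main obstacle. The relation only gives $R_n=nc/\gamma$, a constant.
\end{itemize}
Before trying to force $c=0$ here, I would first check directly whether this is possible. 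Taking $a=1$ and $r(x)=c/\gamma$, one finds $\sigma_{c/\gamma}(\gamma t+w_0)=\gamma t+w_0+c$. This automorphism lies in $\Aut_D(A_h)$ by Corollary \ref{pro_grupoisotro_1}, since condition \eqref{cond_s} is trivially satisfied when $a=1$. So as worded, the statement fails for $w=\gamma t+w_0(x)$ with $\gamma\neq 0$.

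Consequently the proof above establishes the equivalence exactly in the remaining situations:
\begin{itemize}
\item $w\in\Bbbk[x]$;
\item $\ell\geq 2$;
\item $\ell=1$ with $w_1\notin\Bbbk$.
\end{itemize}
In the excluded situation $w=\gamma t+w_0$, the correct description is instead $\rho(w)=w+c$ with $c=\gamma r$ (for $a=1$). I would record this exception explicitly in the statement rather than claim the equivalence there.
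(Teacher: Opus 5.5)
Your analysis is correct, including the counterexample. The paper's entire justification is ``in light of Lemma~\ref{lema_finito_hmaior2} and Proposition~\ref{prop_rhowigualaw}''. Proposition~\ref{prop_rhowigualaw} assumes $\deg_t(w)\geq 2$, and the corollary drops that hypothesis without comment. For $\ell\geq 2$ your argument is exactly the paper's.

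For $\ell=1$ with $w_1=\gamma\in\Bbbk^\ast$, your $\sigma_{c/\gamma}$ is a genuine element of $\Aut_D(A_h)$ (Corollary~\ref{pro_grupoisotro_1} with $a=1$) that sends $w$ to $w+c$. So the printed statement is false already for $w=t$, $\rho=\sigma_1$, for every $h$ of degree $\geq 2$. The paper's own example list after Lemma~\ref{lema_wlinear} (``if $w=t$, then $a^{N-1}=1$ and $r(x)\in\Bbbk$'') exhibits the same phenomenon. More generally, in this exceptional case any $\rho=\sigma_{r(x)}\circ\tau_a\in\Aut_D(A_h)$ has $a^{N-1}=1$ and $\rho(w)-w=\gamma\,r(0)$. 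So your closing description need not be restricted to $a=1$.

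Your two additional cases are correct and go beyond what the paper's argument covers:
\begin{itemize}
\item For $w\in\Bbbk[x]$, the constant-term comparison in $w(ax)=w(x)+c$ gives $c=0$ with no restriction on $\deg(h)$, as in Propositions~\ref{rho_eq_tau} and~\ref{propwx}.
\item For $\ell=1$ with $\deg(w_1)\geq 1$, the reduction to $\rho^n=\sigma_{R_n(x)}$ uses Lemma~\ref{rho_n}, with $a\in\mathbb{G}_n$ from Lemma~\ref{lema_finito_hmaior2}. It gives $w_1(x)R_n(x)=nc$, which forces $R_n=0$ and then $c=0$ in characteristic zero.
\end{itemize}

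What you have actually proved is the corrected statement. The equivalence holds for every $w$ except $w=\gamma t+w_0(x)$ with $\gamma\in\Bbbk^\ast$, where it fails. The paper's corollary should either carry the hypothesis $\deg_t(w)\geq 2$ inherited from Proposition~\ref{prop_rhowigualaw}, or state this exception explicitly.
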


\begin{corollary}
Let $h(x)=x$ and $D=ad_w+\Delta_{s(x)} \in \der(A_h)$. 
If $\rho \in \Aut_D(A_h) \subseteq \Bbbk[x] \rtimes \mathbb{G}_n$, for some $n \geq 1$, then  $\rho(w)-w\in \Bbbk$ if and only if $\rho(w) = w$.
\end{corollary}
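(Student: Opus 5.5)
The plan is to reduce to Proposition \ref{prop_rhowigualaw} and treat separately the values of $\deg_t(w)$ that proposition does not cover. The implication $\rho(w)=w\Rightarrow\rho(w)-w\in\Bbbk$ is trivial, so only the converse needs proof. Fix $\rho=\sigma_{r(x)}\circ\tau_a\in\Aut_D(A_h)$ with $\rho(w)=w+c$, $c\in\Bbbk$, and write $w=\sum_{i=0}^\ell w_i(x)t^i$. The hypothesis $\Aut_D(A_h)\subseteq\Bbbk[x]\rtimes\mathbb{G}_n$ gives $a^n=1$. By Lemma \ref{rho_n}, $\rho^n=\sigma_{R_n(x)}$, and iterating gives $\sigma_{R_n(x)}(w)=w+nc$.

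\emph{Case $\ell\ge 2$.} Since $h(x)=x$ has $\deg(h)=1\ge 1$, this is exactly Proposition \ref{prop_rhowigualaw}. The coefficient of $t^{\ell-1}$ gives $\ell R_n(x)w_\ell(x)=0$, so $R_n=0$. Hence $\rho^n=\mathrm{id}$, so $nc=0$ and $c=0$.

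\emph{Case $\ell=0$.} Here $w=w_0(x)$ and $\rho(w)=w_0(ax)$. Comparing constant terms, as in the proof of Proposition \ref{rho_eq_tau}, forces $c=0$.

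\emph{Case $\ell=1$.} This is the step I expect to be the real obstacle. Here $\sigma_{R_n}(w)=w_1(x)(t+R_n(x))+w_0(x)=w+w_1(x)R_n(x)$, so the condition is $w_1R_n=nc$.
\begin{itemize}
\item If $\deg w_1\ge 1$, then $w_1R_n$ can be constant only if $R_n=0$, and so $c=0$.
\item If $w_1\in\Bbbk^\ast$, the argument breaks down, and the claim appears to be false.
\end{itemize}
I would first test $w=t+x^2$ with $s=0$. A direct computation gives $\rho(w)-w=(a^2-1)x^2+r(x)$. So $\rho\in\Aut_D(A_h)$ forces $a\in\mathbb{G}_2$ and $r$ constant, which gives $\Aut_D(A_h)\subseteq\Bbbk[x]\rtimes\mathbb{G}_2$. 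Yet $\sigma_1(w)=w+1\neq w$. Thus the corollary needs the extra hypothesis $\deg_t(w)\ge2$, as in Proposition \ref{prop_rhowigualaw}, or $\ell=1$ with $w_1\notin\Bbbk$. I would state it with that hypothesis and then prove it by the case analysis above.
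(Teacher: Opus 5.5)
Your cases $\ell\ge 2$, $\ell=0$, and $\ell=1$ with $\deg w_1\ge 1$ are correct. They are in fact more careful than the paper, which only invokes Proposition \ref{prop_rhowigualaw}, and that proposition assumes $\deg_t(w)\ge 2$, which this corollary does not. The genuine error is in the last case, where you conclude the statement is false. Your computation $\rho(w)-w=(a^2-1)x^2+r(x)$ for $w=t+x^2$ is right, but the deduction ``$a\in\mathbb{G}_2$ and $r$ constant'' is not. Since $r(x)$ is an arbitrary polynomial, the choice $r(x)=(1-a^2)x^2+\beta$ gives $\rho(w)-w=\beta\in\Bbbk$ for every $a\in\Bbbk^\ast$. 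Hence $\Aut_D(A_h)=\{\sigma_{(1-a^2)x^2+\beta}\circ\tau_a \mid a\in\Bbbk^\ast,\ \beta\in\Bbbk\}$, which is not contained in $\Bbbk[x]\rtimes\mathbb{G}_n$ for any $n$. So the corollary's hypothesis fails and $w=t+x^2$ is not a counterexample.

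The same happens for every $w=ft+w_0(x)$ with $f\in\Bbbk^\ast$. Since $N=1$, we have $\tau_a(t)=t$, so for $\rho=\sigma_{r}\circ\tau_a$ one gets $\rho(w)-w=f\,r(x)+w_0(ax)-w_0(x)$. Taking $r(x)=f^{-1}(w_0(x)-w_0(ax))$ gives $\rho(w)=w$ for every $a\in\Bbbk^\ast$. Also, for $N=1$ the condition $s(ax)=a^{N-1}s(x)$ is automatic, because $\deg(s)<1$ forces $s$ to be constant. Thus the projection of $\Aut_D(A_h)$ onto the $\tau_a$-component is all of $\Bbbk^\ast$, which contains non-roots of unity such as $a=2$ in characteristic zero. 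So no containment in $\Bbbk[x]\rtimes\mathbb{G}_n$ is possible, and the corollary holds vacuously in this case.

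Replacing your last bullet with this observation completes the proof of the statement as written; no extra hypothesis $\deg_t(w)\ge 2$ is needed. The phenomenon you noticed, $\sigma_1(t+x^2)=t+x^2+1$, would only contradict a pointwise reading in which one assumes just that the given $\rho$ has $a\in\mathbb{G}_n$. The hypothesis on the whole group $\Aut_D(A_h)$ excludes it.
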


 Note that, in the results above, we always have that $\rho(w)-w\in\Bbbk$ is equivalent to condition $\rho(w)=w$, except perhaps if $h(x)=x$ and $\tau_a \in \Aut_D(A_h)$, for all $a \in \Bbbk^\ast$.
In this case, this can happen or not, as we see in the next example.

\begin{example}
   Consider $h(x)=x$, $w=t+x$, $D=ad_w+\Delta_{s(x)}$, where $s(x)=
   \alpha\in\Bbbk$, and
$\rho=\sigma_{r(x)}\circ\tau_a\in \Aut(A_h)$.
Then,
$$\rho(w)-w=t+r(x)+ax-t-x=r(x)+(a-1)x.$$
Thus, if $r(x)=(1-a)x+\beta$, $\beta\in\Bbbk$, then $\rho(w)-w = \beta \in\Bbbk$.
Therefore, $\rho(w) = w$ if and only if $\beta =0$. 
\end{example}

The next result presents a situation where $\Aut_D(A_h) \subseteq \tau_{\mathbb{P}} \subseteq \Bbbk^\ast$ (as in \eqref{tau_P}).
Afterward, as a consequence, in this case, the condition $\tau_a(w)-w \in \Bbbk$ is equivalent to $w$ being a fixed point of $\tau_a$ (see Proposition \ref{rho_eq_tau}), and it characterizes $\Aut_D(A_h)$.

\begin{proposition}\label{prop_rx_zero}
  Let $\deg(h)\geq 1$, $w=\sum_{i=0}^\ell w_i(x)t^i\in A_h$, where $\ell \geq 2$, $w_\ell (x) \neq 0$ and $w_{\ell-1} (x) = 0$,  and $D=ad_w + \Delta_{s(x)}$.
    If $\rho = \sigma_{r(x)} \circ \tau_a \in \Aut_D(A_h)$, then $r(x)=0$.
\end{proposition}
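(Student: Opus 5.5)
By Corollary \ref{pro_grupoisotro_1}, if $\rho=\sigma_{r(x)}\circ\tau_a\in\Aut_D(A_h)$ then $\rho(w)=w+c$ for some $c\in\Bbbk$. I will expand $\rho(w)$ in the basis $\{x^jt^i\}$ and compare the coefficients of $t^\ell$ and $t^{\ell-1}$. This uses only the $ad_w$-part of the condition; the $\Delta_{s(x)}$-condition plays no role.

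As in the proof of Lemma \ref{lema_finito_hmaior2}, we have
\[
\rho(w)=\sum_{i=0}^\ell a^{i(N-1)}w_i(ax)\bigl(t+r(x)\bigr)^i .
\]
By Lemma \ref{lema_rhotnai}, $(t+r(x))^i=t^i+ir(x)t^{i-1}+g_i$ with $\deg_t(g_i)\le i-2$. Only two summands can contribute to $t^{\ell-1}$. The summand $i=\ell$ contributes $a^{\ell(N-1)}w_\ell(ax)\,\ell r(x)$. The summand $i=\ell-1$ contributes $a^{(\ell-1)(N-1)}w_{\ell-1}(ax)$, which vanishes because $w_{\ell-1}=0$. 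The terms $g_i$ have $t$-degree at most $i-2\le \ell-2$ and never reach $t^{\ell-1}$.

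I must also check that writing $w_i(ax)$ on the left of the terms in $g_\ell$ does not move anything up to degree $\ell-1$. This holds because left multiplication by a polynomial in $x$ preserves the normal form $\sum f_i(x)t^i$. So the coefficient of $t^{\ell-1}$ in $\rho(w)$ is exactly $\ell\,a^{\ell(N-1)}w_\ell(ax)r(x)$.

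On the right-hand side, $w+c$ has $t^{\ell-1}$-coefficient $w_{\ell-1}(x)=0$, since $\ell\ge2$ and so the constant $c$ sits only in degree $0$. Equating gives
\[
\ell\,a^{\ell(N-1)}\,w_\ell(ax)\,r(x)=0 .
\]
Here $\ell\neq0$ because the characteristic is zero, $a\neq0$, and $w_\ell(ax)\neq0$ because $w_\ell\neq0$. Since $\Bbbk[x]$ is a domain, it follows that $r(x)=0$.

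The only step needing care is the bookkeeping in the second paragraph, namely that no hidden contribution to $t^{\ell-1}$ arises from the $g_i$ or from reordering products. Both are settled by the degree bound in Lemma \ref{lema_rhotnai} together with the fact that polynomial coefficients stay on the left. The rest of the argument is routine.
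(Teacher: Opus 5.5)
Your proof is correct and follows the paper's own argument: both expand $\rho(w)=w+c$ using Lemma \ref{lema_rhotnai} and compare the $t^{\ell-1}$-coefficients, which gives $\ell\,a^{\ell(N-1)}w_\ell(ax)r(x)=0$ because $w_{\ell-1}=0$. The paper also records the $t^\ell$-coefficient identity $a^{\ell(N-1)}w_\ell(ax)=w_\ell(x)$, but you rightly note it is not needed, since $w_\ell(ax)\neq 0$ follows directly from $w_\ell\neq 0$ and $a\neq 0$.
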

\begin{proof}
    By Proposition \ref{pro_grupoisotro_1}, we have $\rho(w)-w=c\in \Bbbk$. Then, analogously to the proof in Lemma \ref{lema_finito_hmaior2}, we have 
    $\rho(w)=w+c$ if and only if
    \begin{equation*}
 \sum_{i=1}^\ell a^{i(N-1)}w_i(ax)(t^i+ir(x)t^{i-1}+g_i)+w_0(ax) =\sum_{i=0}^\ell w_i(x)t^i+c.
    \end{equation*}

Thus, comparing the coefficients of the terms of degree in $t$ equal to $\ell-1$ and $\ell$, we obtain
$\ell a^{\ell(N-1)}w_\ell (ax)r(x)+a^{(\ell-1)(N-1)}w_{\ell-1}(ax)=w_{\ell-1}(x)$
and 
$a^{\ell(N-1)}w_\ell(ax)=w_\ell(x),$ respectively. 
Since $w_{\ell-1}(x)=0$, the first equality means $a^{\ell(N-1)}w_\ell(ax)r(x)=0$, and
 as $w_\ell(x) \neq 0$ we conclude that $r(x)=0$.
\end{proof}

\medbreak

 In the conditions of the above proposition, if $\rho \in \Aut_D(A_h)$ then $\rho = \tau_a$, for some $a \in \Bbbk^\ast$.
 Hence, $\Aut_{D}(A_h) = \{ \tau_a \in \tau_{\mathbb{P}} \ | \ \tau_a(w)=w\}.$
 In particular, $\Aut_{D}(A_h) \subseteq \mathbb{G}_n$, for some $n \geq 1$, as long as $\deg(h) \geq 2$ (see Lemma \ref{lema_finito_hmaior2}), or $\deg(h)=1$ and $w=\sum_{i=0}^\ell w_i(x) t^i$, $\ell\geq 2$, $\deg(w_\ell(x))\geq 1$ (see Proposition \ref{lema_finito_h_igual_x}).
  In particular, let $h(x)=x$, $w = xt^2$ and $\rho \in \Aut_D(A_h)$; by the above proposition $\rho = \tau_a$, for some $a \in \Bbbk^\ast$, and by Proposition \ref{rho_eq_tau} it holds $\tau_a(w)=w$ and so $a=1$, \emph{i.e.}, in this particular case, $\Aut_D(A_h)=\{Id\}$.
 
However, if
 $\deg(h)=1$ and $w=\sum_{i=0}^\ell w_i(x) t^i$, with $w_\ell(x)\in\Bbbk$ and $w_{\ell-1}=0$, it can happen that $\Aut_D(A_h) = \Bbbk^\ast$.
 For example, if $w=\sum_{i=0}^\ell c_i t^i$ with $c_i\in \Bbbk$ for all $i$, and $c_{\ell-1}=0$, then $\tau_a(w) = w$, for all $a\in\Bbbk^*$.

  \medbreak

As seen in Proposition \ref{prop_rx_zero}, if $w=\sum_{i=0}^\ell w_i(x)t^i\in A_h$ with $\ell \geq 2$ and $w_{\ell-1} (x) = 0$, then $\Aut_D(A_h) \subseteq \Bbbk^\ast$.
  However, it is not true when $w_{\ell-1} (x) \neq 0$ (see Example \ref{example_w}).
On the other hand, by Proposition \ref{propwx}, the case when $w \in \Bbbk[x] \backslash \Bbbk$, we already know $\Aut_D(A_h)=\Bbbk[x] \rtimes \mathbb{G}_n$, for some $n\geq 1$.
Below, we will analyze the case in which $w \in A_h\backslash \Bbbk$ with $\deg_{t}(w)=1$.

\begin{lemma} \label{lema_wlinear} Let $w= f(x)t+g(x), \ f(x), g(x)\in \Bbbk[x]$ and $D=ad_w + \Delta_{s(x)}$. 
    If $\rho = \sigma_{r(x)} \circ \tau_a \in \Aut_D(A_h)$, then $f(x)=a^{N-1}f(ax)$ and $f(x)r(x)=g(x)-g(ax)+c$, where $c = \rho(w) - w \in \Bbbk$.
\end{lemma}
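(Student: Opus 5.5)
By Corollary \ref{pro_grupoisotro_1}, the hypothesis $\rho\in\Aut_D(A_h)$ gives $\rho(w)-w=c$ for some $c\in\Bbbk$. So I will compute $\rho(w)$ explicitly and compare coefficients in the basis $\{x^jt^i\}$. Since we are in the section where $h$ is normalized as in \eqref{aut_Ah}, we have $\rho=\sigma_{r(x)}\circ\tau_a$ with $\tau_a(x)=ax$ and $\tau_a(t)=a^{N-1}t$, where $N=\deg(h)$.

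The computation is short because $\deg_t(w)=1$, so Lemma \ref{lema_rhotnai} is not needed. First,
\[
\tau_a(w)=f(ax)\,a^{N-1}t+g(ax).
\]
Applying $\sigma_{r(x)}$, which fixes $x$ and sends $t\mapsto t+r(x)$, gives
\[
\rho(w)=a^{N-1}f(ax)\bigl(t+r(x)\bigr)+g(ax)=a^{N-1}f(ax)\,t+a^{N-1}f(ax)r(x)+g(ax).
\]

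Next I use that every element of $A_h$ can be written uniquely as $\sum_i p_i(x)t^i$. So the equation $\rho(w)=w+c=f(x)t+g(x)+c$ can be split by degree in $t$:
\begin{itemize}
\item the coefficient of $t$ gives $a^{N-1}f(ax)=f(x)$, which is the first claimed identity;
\item the degree-zero part gives $a^{N-1}f(ax)r(x)+g(ax)=g(x)+c$.
\end{itemize}
Substituting the first identity into the second turns it into $f(x)r(x)=g(x)-g(ax)+c$, which is the second claimed identity.

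There is no real obstacle here. The only point needing care is that $\rho$ is written as $\sigma_{r(x)}\circ\tau_a$, so $\tau_a$ is applied first and the factor $a^{N-1}$ appears before $\sigma_{r(x)}$ acts on $t$. The uniqueness of the normal form $\sum p_i(x)t^i$ is what justifies comparing coefficients.
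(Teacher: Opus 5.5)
Your proof is correct and follows the same route as the paper. Both use Corollary \ref{pro_grupoisotro_1} to get $\rho(w)=w+c$, expand $\rho(w)=a^{N-1}f(ax)\bigl(t+r(x)\bigr)+g(ax)$, and compare the coefficients of $t^1$ and $t^0$ in the normal form. Your expansion is actually cleaner than the paper's displayed line, which omits the $g(ax)$ term and writes $g(t)$ in place of $g(x)$.
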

\begin{proof}
    By Proposition \ref{pro_grupoisotro_1}, $\rho(w)-w=c\in \Bbbk$. Then,
\begin{eqnarray*}
    \rho(w)-w=c&\Longleftrightarrow&a^{N-1}f(ax)(t+r(x))-f(x)t-g(t)=c\\
    &\Longleftrightarrow& a^{N-1}f(ax)=f(x) \  \mbox{and} \ f(x)r(x)=g(x)-g(ax)+c.
    \end{eqnarray*}
\end{proof}

\begin{remark}
    Let $D=ad_w + \Delta_{s(x)} \in \der(A_h)$, where $w = f(x)t +g(x)$, and $\rho = \sigma_{r(x)} \circ \tau_a \in \Aut_D(A_h)$.
    If $f(x)=\sum_{i=0}^k c_ix^i\in \Bbbk[x]$, by Lemmas \ref{lemma_raizdaunidade} and \ref{lema_wlinear}, we get that $a\in \cap_{i \in \mathcal{S}_f}\mathbb{G}_{N-1-i}$, where $\mathcal{S}_f=\{i \ | \ c_i\neq 0\}$.
    Then:
    \begin{itemize}
  \item[(i)] If $g(x)=0$ and $f(x) = f \in \Bbbk^\ast$, then $a^{N-1}=1$ and $r(x) = c/f\in \Bbbk$.
  Hence, $\Aut_D(A_h) \subseteq  \langle \sigma_{c/f} \rangle \rtimes \mathbb{G}_{N-1-i}$.
  
     \item[(ii)] If $\deg(f(x))\leq \deg(g(x))$, then $r(x)=\dfrac{g(x)-g(ax)+c }{f(x)}$ and 
    
    $\Aut_D(A_h)\subseteq \langle \sigma_{r(x)} \rangle \rtimes \left(\cap_{i \in \mathcal{S}_h}\mathbb{G}_{N-1-i}\right)$.

        \item[(iii)] If $\deg(f(x))>\deg(g(x))$, then $r(x)=0$ and
        $\Aut_D(A_h)\subseteq \cap_{i \in \mathcal{S}_h}\mathbb{G}_{N-1-i}$.
    \end{itemize}
\end{remark}

Below we provide some examples that better illustrate these cases above.

\begin{example}
    \begin{enumerate}
         \item If $w=t$, then $a^{N-1}=1$ and $r(x)\in \Bbbk$.
    \item If $w=t+x$, then $a^{N-1}=1$ and $r(x)=x(1-a)+c, \ c\in \Bbbk$.
    \item If $w=xt+x$, then $a^{N}=1$, $c=0$ and $r(x)=1-a$.
      \item If $w=xt$, then $a^N=1$ and $r(x)=0$.
      \item If $w=x^2t+x$, then $a=1$ and $r(x)=0$.
        \end{enumerate}
\end{example}

\medbreak

 In the above examples, it always happen that if $\rho \in \Aut_D(A_h)$, then $\rho = \sigma_{r(x)} \circ \tau_a$, where $a$ is a root of unity and the polynomial $r(x)$ is determined, except maybe for translation by constants.
In the next example, the situation is totally different.
\begin{example}
    Let $h(x)=x^3 + \alpha \, x + \beta$, where $0 \neq \alpha \,\beta \in \Bbbk$, $D=ad_w + \Delta_{s(x)}$ with $w, s(x) \in \Bbbk[x]$, $\deg(s(x))<3$.
    Then, $\Aut_D(A_h) = \Bbbk[x]$.
    Indeed, by Corollary \ref{carac_Aut} ii) it follows that $\Aut(A_h) = \Bbbk[x]$, and since $D(t) \in \Bbbk[x]$ and $D(x)=0$, it follows that $\sigma_{r(x)} \circ D = D \circ \sigma_{r(x)}$, that is, $\Aut_D(A_h) = \Aut(A_h) = \Bbbk[x]$.
\end{example}

It is known that any locally nilpotent derivation of $A_h$ is, up to automorphism, of the form: $D_{p(x)}$, where $D_{p(x)}(x)=0$ and $D_{p(x)}(t)=p(x)$ (see \cite[Prop. 2]{ISF2021}). For this family of derivations we have the following result.

\begin{remark} Let $D_{p(x)}\in LND(A_h)$, $p(x)\in \Bbbk[x]$ and $\sigma_{r(x)} \circ \tau_a \in \Aut(A_h)$.
Then
    $$\Aut_{D_{p(x)}}(A_h) =\{\sigma_{r(x)}\circ\tau_{a} | \ p(ax)=a^{N-1}p(x)\}.$$

   In this case, we can uniquely decompose the derivation $D_{p(x)}$ as $ad_w+\Delta_{s(x)}$ which will depend on the degree of the polynomial $p(x)$ in relation to the degree of $h(x)$. 

If $\deg(p(x))<\deg(h(x))$, then $D_{p(x)}=\Delta_{p(x)}$. Otherwise, by the division algorithm,  there exists $p_1(x), p_2(x)\in \Bbbk[x]$, such that $p(x)=p_1(x)h(x)+p_2(x)$, with $\deg(p_2(x))<\deg(h(x))$.
Thus, $D_{p(x)}=ad_w+\Delta_{p_2(x)}$, where $ w'=-p_1(x)$.
Indeed, as $ad_w(x)=0$, then $w\in \Bbbk[x]$, and so $ad_w(t)=p_1(x)h(x)$ if and only if $p_1(x)=-w'$.
\end{remark}

\section{The singular case $\gcd(h,h') \neq1$}\label{section5}

In the previous section, we dealt with a derivation $D=ad_w + \Delta_{s(x)}$ of $A_h$.
If $\gcd(h,h')=1$, any $D \in \der(A_h)$ has such decomposition \cite[Thm. 10.1]{Nowicki} (see Thm. \ref{nowicki_mdc1}).
When $h$ has multiple roots, new derivations appear, encoded by \emph{special polynomials}, forcing a refinement of the isotropy analysis.

\medbreak

Let $\deg(h)=N \geq 2$ and let $r$ be the number of all pairwise different roots of the polynomial $h(x)$ (recall that we assume $\Bbbk$ an algebraically closed field of characteristic zero). Denote by $\psi$ the greatest common divisor of $h(x)$ and $h'(x)$, and in this case $\deg (\psi) = N-r > 0$. 

We say that a polynomial $H \in A_h$ is \emph{special} if it is of the form 
$$ H=h_nt^n+\ldots+h_1t^1,$$
where $n \ge 1$, $h_i \in \Bbbk[x]$ and $\deg (h_i) < N-r$, for all $i= \I_{1,n}$.
By convention, the zero polynomial is also special.
Given a special polynomial $H$, the linear map $E_H: A_h \to A_h$ defined as
$$ E_H(f)=\left[ \tfrac{1}{\psi}H,f \right], \quad f\in A_h, $$
is a well-defined derivation of $A_h$ (see \cite[Prop. 2.1 and 8.2]{Nowicki}). 
Moreover, $E_H$ is an inner derivation of $A_h$ if and only if $H=0$.

\begin{theorem}[{\cite[Theorem 11.1]{Nowicki}}]
Let $h\in \Bbbk[x]$, $\deg(h) \ge 2$ and $\psi=\gcd(h,h')$. 
Then every derivation $D\in\der(A_h)$ can be uniquely expressed as
\[
D=ad_w+E_H+\Delta_{s(x)},
\]
where $w\in A_h$, $s(x)\in \Bbbk[x]$ with $\deg (s) <\deg (h)$, 
and $H$ is a special polynomial as defined above. Moreover, $D$ is inner if and only if $H=0$ and $s(x)=0$.
\end{theorem}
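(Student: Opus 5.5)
Since this theorem is quoted from Nowicki \cite[Theorem 11.1]{Nowicki}, we only sketch how the decomposition would be obtained. Throughout we use the normal form $g=\sum g_i(x)t^i$ and the commutation rule \eqref{comm_t_fx_eq}.

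\textbf{Step 1: reduce to $D(x)\in\Bbbk[x]$.} Let $D\in\der(A_h)$. Applying $D$ to the relation $tx-xt=h(x)$ gives
\[
[D(t),x]+[t,D(x)]=h'(x)D(x).
\]
By Lemma \ref{lematix}, $[t^i,x]=ih(x)t^{i-1}+(\text{lower terms})$. Reading off leading $t$-coefficients, we would show that $D(x)$ is of the form $[w_1,x]$, say $D(x)=h(x)q$ with $q$ determined by $D(t)$. We then subtract $ad_{w_1}$, choosing $w_1$ by integrating the $t$-coefficients. After this subtraction $D(x)=0$, and $D(x)=0$ forces $D(t)$ to commute with $x$ modulo the relation, that is, $D(t)=p(x)\in\Bbbk[x]$. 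The integration fails exactly when the coefficients cannot be divided by $h$, and those obstructions are what we keep track of.

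\textbf{Step 2: identify the obstruction.} The leftover terms are of the form $\psi^{-1}(\ldots)$. To handle them we work in the localization $A_h[\psi^{-1}]$ and look for $u=\psi^{-1}H$ with $[u,x]\in A_h$ and $[u,t]\in A_h$.
\begin{itemize}
\item The condition $[u,x]\in A_h$ requires $h\,\partial_t(H)/\psi\in A_h$, which holds because $\psi\mid h$.
\item The condition $[u,t]\in A_h$ requires computing $[\psi^{-1},t]=\psi^{-2}\psi' h$, and $\psi\mid h'$ is what allows $\psi'h/\psi^2$ to be handled via $\psi^2\mid h\psi'$-type divisibility; this uses the multiplicity structure of the roots of $h$.
\end{itemize}
Reducing the coefficients of $H$ modulo $\psi$ by absorbing multiples into $w$ leaves a special $H$, i.e. one with $\deg h_i<N-r$.

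\textbf{Step 3: the remaining polynomial part.} After Steps 1 and 2, $D(t)=p(x)$. Write $p=-w_0'h+s$ with $\deg s<\deg h$, as in the division argument of the LND remark. This is the main obstacle: it is exactly where the hypothesis $\gcd(h,h')=1$ was used in Theorem \ref{nowicki_mdc1}. When $\psi\ne1$, part of $p$ modulo $h$ is realized by $E_H$ with $H=h_1t$, using $[\psi^{-1}h_1t,t]$. One must check that the bookkeeping still leaves $s$ with $\deg s<\deg h$; if it does not, the $t$-linear special parts must instead be normalized into $s$.

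\textbf{Step 4: uniqueness.} Suppose $ad_w+E_H+\Delta_s=0$. Evaluating at $x$ gives $[w+\psi^{-1}H,x]=0$ in the localization, so $w+\psi^{-1}H\in\Bbbk[x][\psi^{-1}]$. Because $H$ has no $t^0$ term, this forces $H=0$, and then the argument of Lemma \ref{lemadelta} gives $s=0$ and $w\in\Bbbk$.
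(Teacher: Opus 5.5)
\paragraph{Verdict.} The paper does not prove this statement; it quotes it from Nowicki. Judged on its own, the uniqueness half of your sketch is essentially sound, but the existence half has a genuine gap.

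\paragraph{Where the existence argument fails.}
\begin{itemize}
\item The displayed identity in Step 1 is wrong. Applying $D$ to $tx-xt=h$ gives $[D(t),x]+[t,D(x)]=D(h(x))$, and $D(h(x))=h'(x)D(x)$ only when $D(x)$ commutes with $x$. For $h=x^2$ and $D=ad_{t^2}$ one gets $D(x^2)=4x^3t+6x^4$, but $h'D(x)=4x^3t+4x^4$.
\item More seriously, the claim that $D(x)=h(x)q=[w_1,x]$ with $w_1\in A_h$, so that one reaches $D(x)=0$ inside $A_h$, fails exactly for the derivations this theorem is about. Every normal-form coefficient of $[w,x]$, $w\in A_h$, lies in $h\Bbbk[x]$. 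Yet for $h=x^2$ one has $E_t(x)=[x^{-1}t,x]=x$.
\item The whole content of the theorem therefore sits in your ``obstructions''. Step 2 only asserts that they have the form $\psi^{-1}(\cdots)$. It then checks the converse direction, namely that $[\psi^{-1}H,-]$ preserves $A_h$ (Nowicki's Prop.~8.2).
\item Nothing shows that an arbitrary $D$ is, modulo $\Delta$-type derivations, a commutator with an element of some overring. Nor does anything explain why denominators worse than $\psi$ cannot occur.
\item $A_h[\psi^{-1}]$ alone is not a place where you can integrate $D(x)$: $h$ is invertible there only when every root of $h$ is multiple.
\item Step 3 misplaces the difficulty. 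The division $p=-w_0'h+s$ works for every $h$; it is not where $\gcd(h,h')=1$ enters.
\item $E_{h_1t}$ cannot absorb part of $p$ either, since $E_{h_1t}(x)=hh_1/\psi\neq 0$ and $E_{h_1t}(t)=-h(\psi^{-1}h_1)'t$ is $t$-linear.
\end{itemize}

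\paragraph{One way to supply the missing step.}
\begin{enumerate}
\item Extend $D$ to $A_h[h^{-1}]=\Bbbk[x,h^{-1}][t;d]$. There $y=h^{-1}t$ satisfies $[y,x]=1$, so $[\sum u_iy^i,x]=\sum i u_iy^{i-1}$. Integrating in $y$ gives $u_1$ with $[u_1,x]=D(x)$.
\item $D-ad_{u_1}$ kills $x$, hence sends $y$ to some $f\in\Bbbk[x,h^{-1}]$. By partial fractions $f=g'+\sum_i c_i/(x-\lambda_i)$. Therefore $D=ad_U+\Delta_{s_0}$ with $U=u_1-g$ and $s_0=h\sum_i c_i/(x-\lambda_i)$, where $\deg s_0<N$.
\item Since $D$ and $\Delta_{s_0}$ preserve $A_h$, so does $ad_U$. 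Write $U=\sum U_it^i$ with $U_i\in\Bbbk[x,h^{-1}]$. The condition $[U,t]=-\sum hU_i't^i\in A_h$ means $hU_i'\in\Bbbk[x]$ for every $i$.
\item A pole of order $e$ of $U_i$ at a root of multiplicity $m$ gives a pole of order $e+1$ of $U_i'$, so $e\le m-1$. Hence $U_i\in\psi^{-1}\Bbbk[x]$. This is where $\psi=\gcd(h,h')$ genuinely comes from.
\item Dividing each $U_i$ by $\psi$ gives $U=w+\psi^{-1}h_0+\psi^{-1}H$ with $w\in A_h$ and $H$ special. The term $[\psi^{-1}h_0,-]=D_{-h(\psi^{-1}h_0)'}$ is absorbed into $ad_{f(x)}+\Delta_{s_1}$ by division by $h$.
\end{enumerate}

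\paragraph{Uniqueness.} Your Step 4 is essentially right, with one correction. $H=0$ follows from comparing the $t^i$-coefficients $w_i+\psi^{-1}h_i=0$ for $i\ge 1$ and using $\deg h_i<\deg\psi$. The absence of a $t^0$ term in $H$ is not what forces it.
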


\medbreak 

Recall that $A_h = \Bbbk [x] [t ; d]$ where $d = h(x) \, \partial_x$.
Let $S=\{\psi^n \mid n\ge 0\}\subseteq \Bbbk[x]$. Since $d(\psi)=\psi' \, h(x) \in \psi \, \Bbbk[x]$, the set $S$ is an Ore set in $A_h$ (see \cite[Lemma 4.2]{BLO2015}).
We denote by
$$R_S:=\Bbbk[x][S^{-1}]
\quad\text{and}\quad
\mathcal B:=A_h[S^{-1}]\cong R_S[t;d_S],$$
where $d_S:R_S\to R_S$ is the unique extension of $d$.
Moreover, for any $\rho \in \Aut(A_h)$, we can also consider its extension to $\mathcal{B}$.
Indeed, if $\rho = \sigma_{r(x)} \circ \tau_a \in \Aut(A_h)$, then $h(x)$ satisfies $h(ax)=a^N h(x)$, and consequently $h^\prime(ax)=a^{N-1} h^\prime(x)$.
As $\psi(x) = \gcd(h,h^\prime)$, then $\psi(ax)$ divides both $h(x)$ and $h^\prime(x)$, and so $\psi(ax) \, | \, \psi(x)$, \emph{i.e.}, $\psi(ax) = a_\psi \, \psi(x)$, for some $a_\psi \in \Bbbk$.
Thus, $\rho(\psi)=\psi(ax)=a_\psi \, \psi(x)$ and, therefore, there exists a unique extension of $\rho$ to an automorphism of $\mathcal{B}$ (which we will also denote by $\rho$).
Furthermore, for an element $u \in A_h$, by $\frac{u}{\psi} \in \mathcal{B}$ we will mean $\frac{1}{\psi}u \in \mathcal{B}$.

\medbreak

Consider the $\Bbbk$-subspaces $\Inn(A_h)$ and $\Delta(A_h)$ as in the previous section.
If $H_1$ and $H_2$ are two special polynomials, and $c \in \Bbbk$, then $H_1 + c H_2$ is a special polynomial and $E_{H_1}+ c E_{H_2} = E_{H_1 + c H_2}$.
Thus, one can consider the $\Bbbk$-subspace $\mathcal{E}(A_h)= \{ E_H \ | \ \textrm{$H$ is a special polynomial of $A_h$} \}$, and the above theorem ensure that $\der(A_h) = \Inn(A_h) \oplus \Delta(A_h) \oplus \mathcal{E}(A_h)$ as $\Bbbk$-spaces.

We would like to conclude that $\Inn(A_h), \Delta(A_h)$ and $\mathcal{E}(A_h)$ are independent $\Aut(A_h)$-submodules of $\der(A_h)$, and, if that were the case, then $\Aut_D(A_h),$ $D=ad_w+\Delta_{s(x)}+E_H$, could be calculated using Lemma \ref{independ} by computing each of the groups $\Aut_{ad_w}(A_h),$ $\Aut_{\Delta_{s(x)}}(A_h)$ and $\Aut_{E_H}(A_h)$ separately.
But that is not the case. 
In general, the $\Bbbk$-space $\mathcal{E}(A_h)$ is not even an $\Aut(A_h)$-submodule of $\der(A_h)$.
Indeed, let $\rho \in \Aut(A_h)$.
Since $\rho \circ E_H \circ \rho^{-1} (f) = \left[ \tfrac{\rho(H)}{a_\psi \psi}, f \right]$ for all $f \in A_h$, if $\rho(H)$ is a special polynomial, then $\rho \circ E_H \circ \rho^{-1} = E_{(a_\psi)^{-1}\rho(H)}$;
but it is not always the case that $\rho(H)$ is a special polynomial.
More precisely, conjugating a derivation of type $E_H$ may not produce a derivation $E_J \in \mathcal{E}(A_h)$. 
Consider the following example.

\begin{example}
Let $h(x) = x^2$ and $\rho=\sigma_{x \, p(x) + c } \circ \tau_a \in \Aut(A_h)$, $p(x)\in\Bbbk[x]$ and $a, c\in\Bbbk$, and $H=t \in A_h$.
Note that $\psi=x$ and $E_t=[x^{-1}t,-]$.
Since $\rho(t)=a(t+x p(x)+c)$ and $\rho(x)=ax$, we obtain
$$ \rho(x^{-1}t)=\rho(x)^{-1}\rho(t)
=x^{-1}t+p(x)+c\,x^{-1}.$$

Since $[cx^{-1},x]=0$ and $[cx^{-1},t]=c$, it means that $[cx^{-1},-]=\Delta_c$.
Then
$$
\rho\circ E_t\circ\rho^{-1}
=
[x^{-1}t+p(x)+c\,x^{-1},-]
=
E_t+ad_{p(x)}+\Delta_c.$$
\end{example}

\medbreak

The above discussion ensures that for a derivation $D=ad_w+\Delta_{s(x)}+E_H$, one cannot consider the derivation $E_H$ separately.
In this context, we denote $w^\ast := w + \tfrac{H}{\psi} \in \mathcal{B}$,
so that we can write $ad_w + E_H = [w^\ast, - ]$.
\begin{comment}
    Denote by $ W = Ad \oplus E$ as $\Bbbk$-spaces.
The following proposition shows that this component is independent from the $\Delta$, as $\Aut(A_h)$-submodules of $\der(A_h)$.
%differential part $\Delta_{s(x)}$.
\end{comment}

\begin{lemma}\label{R_S} If $u \in \mathcal B$ satisfies $[u,x]=0$, then $u \in R_S$.
\end{lemma}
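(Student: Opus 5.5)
We work in $\mathcal B\cong R_S[t;d_S]$, where every element has a unique expression $u=\sum_{i=0}^n u_i t^i$ with $u_i\in R_S$. This follows from the Ore extension structure, since $R_S$ is a localization of $\Bbbk[x]$ at the $d$-stable set $S$. The plan is to mimic the commutator calculation of Lemma \ref{lematix} inside $\mathcal B$. We first extend Lemma \ref{lematix} to $\mathcal B$. Since $[t,x]=h$ in $\mathcal B$ and $R_S$ is commutative, the same induction yields, for every $i\ge 1$,
\[
[t^i,x]=i\,h(x)\,t^{i-1}+(\text{terms of } t\text{-degree} < i-1),
\]
with coefficients in $R_S$. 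In that induction the only relation used is $t f=f t+d_S(f)$ for $f\in R_S$, which holds by construction of $d_S$.

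Now let $u=\sum_{i=0}^n u_i t^i$ with $u_n\neq 0$, and suppose toward a contradiction that $n\ge 1$. Because $u_i$ commutes with $x$, we have
\[
[u,x]=\sum_{i=0}^n u_i[t^i,x]=\sum_{i=1}^n u_i\bigl(i\,h\,t^{i-1}+\text{lower}\bigr).
\]
The coefficient of $t^{n-1}$ is therefore $n\,u_n h$. This expression comes only from the $i=n$ term, since the terms with $i<n$ contribute at most $t$-degree $i-1<n-1$. Because $\mathrm{char}\,\Bbbk=0$, $h\neq0$, and $R_S$ is a domain (being a localization of $\Bbbk[x]$), we get $n u_n h\neq 0$. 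By uniqueness of the $t$-expansion in $\mathcal B$, this shows $[u,x]\neq 0$, a contradiction. Hence $n=0$, and so $u=u_0\in R_S$.

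The only point requiring care is to justify that $\mathcal B$ really is $R_S[t;d_S]$ with $R_S$-coefficients on the left and a unique normal form. This is the identification $\mathcal B\cong R_S[t;d_S]$ already stated above, via \cite[Lemma 4.2]{BLO2015}. We also need the degree bookkeeping of Lemma \ref{lematix} to remain valid when coefficients lie in $R_S$ rather than $\Bbbk[x]$, and this holds verbatim because the argument uses only commutativity of the coefficient ring and the skew relation. No real obstacle is expected beyond this.
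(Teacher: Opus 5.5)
Your proof is correct and is essentially the paper's argument: you expand $u$ in $t$-powers with left coefficients in $R_S$, use Lemma \ref{lematix} to see that the $t^{n-1}$-coefficient of $[u,x]$ is $n\,u_n h(x)\neq 0$ because $R_S$ is a domain, and conclude that $n=0$. Re-deriving Lemma \ref{lematix} over $R_S$ does no harm but is not needed, since $[t^i,x]$ already lies in $A_h\subseteq\mathcal B$.
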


\begin{proof}
Let $u=\sum_{i=0}^m f_i t^i$ where $f_i\in R_S$ and $f_m \neq 0$.
If $m\ge1$, then
$$[u,x]=\sum_{i=1}^m f_i[t^i,x].$$

By Lemma \ref{lematix}, the highest power of $t$ of $[t^i,x]$ is $ih(x)t^{i-1}$, hence the highest power of $t$ of $[u,x]$ is $mf_m h(x)t^{m-1}$. Since $R_S$ is a domain and $h(x)\neq0$, this term is nonzero, contradicting $[u,x]=0$. Therefore, $m=0$, \emph{i.e.}, $u\in R_S$.
\end{proof}

\begin{lemma} \label{Kerds}Let $d_S:R_S\to R_S$ be the unique extension of the derivation $d=h(x)\partial_x$, where $h(x)\in\Bbbk[x]\setminus\{0\}$. Then, $\ker(d_S)=\Bbbk$.    
\end{lemma}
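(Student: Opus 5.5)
We must show that an element of $R_S=\Bbbk[x][\psi^{-1}]$ annihilated by $d_S=h(x)\,\partial_x$ is a constant. The plan is to reduce to the ordinary derivative on the field $\Bbbk(x)$. Since $\Bbbk[x]$ is a domain, $R_S$ embeds in $\Bbbk(x)$. The usual derivative $\partial_x$ extends uniquely to $\Bbbk(x)$ by the quotient rule, and $h\,\partial_x$ restricted to $R_S$ is a derivation extending $d$. By the uniqueness of the extension of derivations to localizations, $d_S(u)=h\,u'$ for every $u\in R_S$, where $u'$ is the quotient-rule derivative. Concretely, for $u=f/\psi^n$ we have
\[
d_S(u)=h\,\frac{f'\psi-nf\psi'}{\psi^{n+1}}.
\]

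Now suppose $d_S(u)=0$. Since $h\neq 0$ and $\Bbbk(x)$ is a field, this gives $u'=0$, that is, $f'\psi=nf\psi'$. We then need that a rational function with zero derivative is constant in characteristic zero. To see this, write $u=p/q$ in lowest terms with $q$ monic. The condition $u'=0$ becomes $p'q=pq'$. Since $\gcd(p,q)=1$, $q$ divides $q'$, and $\deg q'<\deg q$ forces $q'=0$. In characteristic zero this means $q\in\Bbbk$, so $q=1$. Then $p'=0$, which again by characteristic zero gives $p\in\Bbbk$. Hence $u\in\Bbbk$. Conversely, $\Bbbk\subseteq\ker(d_S)$ is clear, so $\ker(d_S)=\Bbbk$.

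The only point that needs care is justifying the formula $d_S=h\,\partial_x$ on $R_S$. This follows from the uniqueness statement already invoked in the definition of $d_S$: two derivations of the localization that agree on $\Bbbk[x]$ agree on all of $R_S$, because $0=d(\psi^n\psi^{-n})$ forces the value on $\psi^{-n}$. Everything else is elementary.
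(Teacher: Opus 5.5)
Your argument is correct and is essentially the paper's proof: you reduce $d_S(u)=0$ to $u'=0$ in $\Bbbk(x)$, write $u$ in lowest terms, and combine coprimality with a degree comparison. The differences are cosmetic. You first show that the denominator divides its derivative, whereas the paper does this for the numerator ($F\mid F'$). You also explicitly justify $d_S(u)=h\,u'$ via uniqueness of the extension, which the paper takes for granted.
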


\begin{proof}
 Let $u\in R_S$ and assume that $d_S(u)=0$.
Since $d_S(u)=h(x)u'$, we obtain $u'=0$.
Write $u=\frac{f}{g} \in R_S$, that is, $f\in\Bbbk[x]$ and $g = \psi^n \in S \subseteq\Bbbk[x]$.
If $f=0$, then $u=0$, \emph{i.e.,} $u \in \Bbbk$.
Assume $f \neq 0$.
Note that $u=\frac{f}{g} = \frac{F \Psi}{G \Psi} = \frac{F}{G} \in \Bbbk(x)$, where $F, G \in \Bbbk[x]$, $\gcd(f,g) = \Psi$, $\gcd(F,G)=1$, and $\Bbbk(x)$ denotes the field of fractions of $\Bbbk[x]$, which coincides with the field of fractions of $R_S$.
In particular, for $F \neq 0$, $\deg(F^\prime) < \deg(F)$.
Since $0 = u ^\prime = \frac{G F^\prime - F G^\prime}{G^2}$, it follows that $G F^\prime = F G^\prime$.
Hence, $F \mid G F^\prime$, and so $F \mid F^\prime$ since $\gcd(F,G)=1$.
If $F^\prime \neq 0$, then $\deg(F) \leq \deg(F^\prime)$, but it is a contradiction.
Therefore, $F^\prime = 0$, \emph{i.e.}. $F \in \Bbbk$.
Then, $F G^\prime =G F^\prime = 0$, that is $G^\prime =0$, and so $G \in \Bbbk$, and consequently $u = \frac{F}{G} \in \Bbbk$.
Thus, $\ker(d_S) \subseteq \Bbbk$, and since the reverse inclusion is clear, it follows that $\ker(d_S)=\Bbbk$.
\end{proof}

\begin{proposition} \label{classificacao}
Let $D=ad_w+E_H+\Delta_{s(x)}=[w^\ast,-]+\Delta_{s(x)} \in \der(A_h)$, where $w^\ast:=w+\psi^{-1}H \in \mathcal B$, $H$ is a special polynomial, $w\in A_h$, $\psi=\gcd(h,h')$ and $\deg (s(x)) < \deg (h(x))$.
Then, $\rho \in \Aut_D(A_h)$ if and only if $d_S(\rho(w^\ast)-w^\ast)=a^{1-N}s(ax)-s(x)$ and $\rho(w^\ast)-w^\ast \in R_S$.
\end{proposition}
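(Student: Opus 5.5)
We compute $\rho D \rho^{-1}$ explicitly, working inside $\mathcal{B}$ where $\rho$ extends uniquely. We then compare it with $D$ on the generators $x$ and $t$.

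First, for any $f\in A_h$ we have $\rho[w^\ast,\rho^{-1}(f)]=[\rho(w^\ast),f]$, since the extension of $\rho$ to $\mathcal B$ is an algebra automorphism. So $\rho\circ[w^\ast,-]\circ\rho^{-1}=[\rho(w^\ast),-]$ as maps on $A_h$. By the computation in the proof of Lemma \ref{lemadelta}, $\rho\Delta_{s(x)}\rho^{-1}=\Delta_{S(x)}$ with $S(x)=a^{1-N}s(ax)$. Hence $\rho\in\Aut_D(A_h)$ if and only if
\[
[\rho(w^\ast)-w^\ast,-]=\Delta_{s(x)}-\Delta_{S(x)}
\]
as derivations of $A_h$. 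Set $u:=\rho(w^\ast)-w^\ast\in\mathcal B$.

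Since both sides are derivations of $A_h$, which is generated by $x$ and $t$, it suffices to compare their values on $x$ and $t$. On $x$ the condition reads $[u,x]=0$. By Lemma \ref{R_S} this forces $u\in R_S$. On $t$ it reads $[u,t]=s(x)-a^{1-N}s(ax)$. For $u\in R_S$ the relation \eqref{comm_t_fx_eq}, extended to $\mathcal B\cong R_S[t;d_S]$, gives $tu=ut+d_S(u)$, so $[u,t]=-d_S(u)$. The condition on $t$ therefore becomes $d_S(u)=a^{1-N}s(ax)-s(x)$.

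For the converse, assume $u\in R_S$ and $d_S(u)=a^{1-N}s(ax)-s(x)$. Then $[u,x]=0$ because $R_S$ is commutative, and $[u,t]=s(x)-S(x)$ by the same identity. So the two derivations agree on the generators, hence on all of $A_h$, and $\rho\in\Aut_D(A_h)$.

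The main obstacle is a technical one: checking that the extension of $\rho$ to $\mathcal B$ is compatible with conjugation. The point is that $\rho^{-1}(f)\in A_h$, so applying $[w^\ast,-]$ and then $\rho$ is legitimate in $\mathcal B$ and lands back in $A_h$. We also need the identity $tu-ut=d_S(u)$ for $u\in R_S$, which holds by the construction of the Ore extension $R_S[t;d_S]$. Lemma \ref{Kerds} is not needed here, though it would be used afterwards to see that $u$ is determined up to a constant by the displayed condition.
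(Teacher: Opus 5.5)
Your proof is correct and follows essentially the same route as the paper's. You use the extension of $\rho$ to $\mathcal B$ to rewrite the isotropy condition as $[\rho(w^\ast)-w^\ast,-]=\Delta_{s(x)}-\Delta_{a^{1-N}s(ax)}$, evaluate at $x$ (Lemma \ref{R_S}) and at $t$ (the Ore relation $[u,t]=-d_S(u)$), and reverse the steps for the converse; you are also right that Lemma \ref{Kerds} is not used here, since the paper only needs it in the subsequent corollary.
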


\begin{proof}
Consider the extension of $\rho$ to an automorphism of $\mathcal B$.
Then, it holds $\rho \circ[w^\ast,-] \circ \rho^{-1}=[\rho(w^\ast),-]$. 
Moreover, $\rho\circ\Delta_{s(x)}\circ\rho^{-1}
=\Delta_{a^{1-N}s(ax)}$, since $(\rho\circ\Delta_{s(x)}\circ\rho^{-1})(x)=0$ and $(\rho\circ\Delta_{s(x)}\circ\rho^{-1})(t)=a^{1-N}s(ax)$. Therefore,
\[
\rho\circ D\circ\rho^{-1}
=
[\rho(w^\ast),-]+\Delta_{a^{1-N}s(ax)}.
\]

Thus, $\rho\in\Aut_D(A_h)$ if and only if
\begin{equation}\label{prop_42}
[\rho(w^\ast)-w^\ast,-]+\Delta_{a^{1-N}s(ax)-s(x)}=0.
\end{equation}
Now, we prove that \eqref{prop_42} holds if and only if $d_S(\rho(w^\ast)-w^\ast)=a^{1-N}s(ax)-s(x)$ and $\rho(w^\ast)-w^\ast \in R_S$.
First, assume that \eqref{prop_42} holds.
Then, evaluating it at $x$ and $t$, we obtain
\[
[\rho(w^\ast)-w^\ast,x]=0 \quad \text{and} \quad [\rho(w^\ast)-w^\ast,t]=s(x)-a^{1-N}s(ax).
\]

By Lemma \ref{R_S}, it follows that $\rho(w^\ast)-w^\ast\in R_S$, and so the Ore relation in $\mathcal B$ gives
\[
[\rho(w^\ast)-w^\ast,t]
=
-d_S(\rho(w^\ast)-w^\ast).
\]

Therefore, $d_S(\rho(w^\ast)-w^\ast)=a^{1-N}s(ax)-s(x)$.

 For the converse, assume that $\rho(w^\ast)-w^\ast \in R_S$ and $d_S(\rho(w^\ast)-w^\ast)=a^{1-N}s(ax)-s(x)$.
Since $\rho(w^\ast)-w^\ast \in R_S$, then 
$$[\rho(w^\ast)-w^\ast,t]
=
-d_S(\rho(w^\ast)-w^\ast) = s(x) - a^{1-N}s(ax),$$
and so 
$$ [\rho(w^\ast)-w^\ast,t]+\Delta_{a^{1-N}s(ax)-s(x)}(t)=0,$$
and clearly 
$$ [\rho(w^\ast)-w^\ast,x]+\Delta_{a^{1-N}s(ax)-s(x)}(x)=0,$$
that is, \eqref{prop_42} holds.
\end{proof}

\begin{remark}
If $\gcd(h,h')=1$, \emph{i.e.} $\psi=1$, then $R_S=\Bbbk[x]$, $\mathcal B=A_h$, $w^\ast=w$ and $H=0$.
In this case, Proposition~\ref{classificacao} coincides with the square-free criterion obtained in the previous section (see Corollary \ref{pro_grupoisotro_1}).
\end{remark}

\begin{corollary}\label{adwEH} Let  $H$ be a special polynomial and $w\in A_h$. If $D=ad_w+E_H=[w^\ast,-]$, then
\[
\Aut_D(A_h)
=
\big\{\rho\in\Aut(A_h)\ \big|\ 
\rho(w^\ast)-w^\ast\in\Bbbk \big\}.
\]
\end{corollary}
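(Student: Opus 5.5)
This is the special case $s(x)=0$ of Proposition~\ref{classificacao}. With $D=ad_w+E_H=[w^\ast,-]$ we have $\Delta_{s(x)}=\Delta_0=0$, so the right-hand side $a^{1-N}s(ax)-s(x)$ in the proposition vanishes for every $\rho=\sigma_{r(x)}\circ\tau_a$. The proposition then says that $\rho\in\Aut_D(A_h)$ if and only if $u:=\rho(w^\ast)-w^\ast$ lies in $R_S$ and $d_S(u)=0$. It remains to show that these two conditions together are equivalent to $u\in\Bbbk$.

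For the forward direction, suppose $u\in R_S$ and $d_S(u)=0$. Lemma~\ref{Kerds} gives $\ker(d_S)=\Bbbk$, so $u\in\Bbbk$.

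For the converse, suppose $u\in\Bbbk$. Since $\Bbbk\subseteq R_S$, we have $u\in R_S$. Moreover $d_S(u)=h(x)u'=0$, because $u$ is a constant. So both conditions of Proposition~\ref{classificacao} hold, and $\rho\in\Aut_D(A_h)$.

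One could also argue directly, as in Lemma~\ref{lema_interna_geral}: conjugating $[w^\ast,-]$ by the extended $\rho$ gives $[\rho(w^\ast),-]$. Equality with $D$ on the generators $x$ and $t$ forces $u$ to commute with $x$ and $t$; Lemma~\ref{R_S} puts $u$ in $R_S$, and then Lemma~\ref{Kerds} makes $u$ a constant. However, reducing to Proposition~\ref{classificacao} is cleaner. There is no genuine obstacle here. The only point needing care is that $\rho(w^\ast)$ is computed through the extension of $\rho$ to $\mathcal B$, which the paper has already justified via $\rho(\psi)=a_\psi\psi$.
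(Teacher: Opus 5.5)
Your proposal is correct and matches the paper's proof: specialize Proposition~\ref{classificacao} to $s(x)=0$, then use Lemma~\ref{Kerds} to show that the conditions $\rho(w^\ast)-w^\ast\in R_S$ and $d_S(\rho(w^\ast)-w^\ast)=0$ together amount to $\rho(w^\ast)-w^\ast\in\Bbbk$. You also check the converse explicitly (constants lie in $R_S$ and $d_S$ kills them), a detail the paper leaves implicit.
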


\begin{proof}
Proposition \ref{classificacao}, considering $s(x)=0$, states that $\rho\in\Aut_D(A_h)$ if and only if $\rho(w^\ast)-w^\ast \in R_S$ and $d_S(\rho(w^\ast)-w^\ast)=0$;
by Lemma \ref{Kerds}, the last condition is equivalent to $\rho(w^\ast)-w^\ast \in \Bbbk$.
\end{proof}

Consider $D=E_H$ in the above result, \emph{i.e.,} $w=0$.
Then, one immediately gets the following:
\begin{corollary}
Let $H$ be a special polynomial. Then
\[
\Aut_{E_H}(A_h)
=
\bigl\{\rho\in\Aut(A_h)\ \big|\ 
\rho(\psi^{-1}H)-\psi^{-1}H\in\Bbbk
\bigr\}.
\]
\end{corollary}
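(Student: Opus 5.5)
This is the special case $w=0$ of Corollary \ref{adwEH}, so the plan is simply to specialize that corollary. With $w=0$ we have $ad_w=ad_0=0$, hence $D=E_H$. The element $w^\ast=w+\psi^{-1}H$ becomes $\psi^{-1}H\in\mathcal B$, and by definition $E_H=[\psi^{-1}H,-]=[w^\ast,-]$. Corollary \ref{adwEH} then gives directly
\[
\Aut_{E_H}(A_h)=\bigl\{\rho\in\Aut(A_h)\ \big|\ \rho(\psi^{-1}H)-\psi^{-1}H\in\Bbbk\bigr\},
\]
which is the claim.

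For completeness, I would also recall the route through Proposition \ref{classificacao} with $s(x)=0$, so that $a^{1-N}s(ax)-s(x)=0$. The proposition says $\rho\in\Aut_{E_H}(A_h)$ if and only if $u:=\rho(\psi^{-1}H)-\psi^{-1}H$ lies in $R_S$ and $d_S(u)=0$. Lemma \ref{Kerds} identifies $\ker(d_S)=\Bbbk$, so these two conditions together are equivalent to $u\in\Bbbk$. Conversely, any $u\in\Bbbk$ lies in $R_S$ and is killed by $d_S$.

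The only point needing care is that the extension of $\rho$ to $\mathcal B$ is well defined. This holds because $\rho(\psi)=a_\psi\psi$, as shown before Proposition \ref{classificacao}, so $\rho(\psi^{-1}H)=a_\psi^{-1}\psi^{-1}\rho(H)$ makes sense in $\mathcal B$. I expect no genuine obstacle: the statement is an immediate consequence of the earlier corollary.
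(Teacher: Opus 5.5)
Your proposal is correct and matches the paper's argument: the paper obtains this corollary by setting $w=0$ in Corollary~\ref{adwEH}, so that $D=E_H=[\psi^{-1}H,-]$ and $w^\ast=\psi^{-1}H$. Your second route, through Proposition~\ref{classificacao} with $s(x)=0$ and Lemma~\ref{Kerds}, simply reproduces the proof of Corollary~\ref{adwEH} itself.
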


\medbreak

Consider $D=ad_w+E_H$, for some $w, H \in A_h$, where $H$ is a special polynomial, and $\rho \in \Aut(A_h)$.
It is clear that if $\rho(w)-w, \rho\left( \frac{H}{\psi}\right) - \frac{H}{\psi}\in \Bbbk$, then $\rho(w^*)-w^* \in \Bbbk$, where $w^*=w + \frac{H}{\psi}$.
Therefore, $\rho \in \Aut_D(A_h)$.
The next example illustrates that $\rho(w^*)-w^* \in \Bbbk$ can happen even if $\rho(w)-w \notin \Bbbk$ and $\rho(\frac{H}{\psi})-\frac{H}{\psi} \notin \Bbbk$.
That is, to $\rho \in \Aut_D(A_h)$ it is necessary to verify $\rho\left(w + \frac{H}{\psi}\right) - \left( w + \frac{H}{\psi}\right) \in \Bbbk$, and not just $\rho(w)-w, \rho\left( \frac{H}{\psi}\right) - \frac{H}{\psi}\in \Bbbk$ separately.

\begin{example}
Consider $A_{x^2}$, $w=\frac{1}{1-a} x + c$, where $1 \neq a \in \Bbbk$, and the special polynomial $H=t$.
Thus, $\psi = x$ and $w^* = w + \frac{t}{x}$.
For $\rho = \sigma_{x^2} \circ \tau_a \in \Aut(A_h)$, we get $\rho(w)-w = -x$ and $\rho\left(\frac{t}{x}\right) - \frac{t}{x} = x$, and so $\rho(w^*)-w^* = 0 \in \Bbbk$.
Therefore, $\rho \in \Aut_{ad_w + E_H}(A_h)$, but $\rho \notin \Aut_{ad_w}(A_h)$ and $\rho \notin \Aut_{ E_H}(A_h)$.
\end{example}

\begin{remark}
Let $u\in R_S$ and assume $d_S(u)\in \Bbbk[x]$.
Then the commutator $[u,-]$ is a well-defined derivation of $A_h$ and $[u,-]=D_{-d_S(u)}$, where $D_{-d_S(u)}$ is the derivation of $A_h$ determined by $D_{-d_S(u)}(x)=0$ and $D_{-d_S(u)}(t)=-d_S(u)$.
In particular, if $\deg (d_S(u))< \deg (h(x))$, then $
[u,-]=\Delta_{-d_S(u)}$.
In general, we write $d_S(u)=q(x)h(x)+r(x)$, with $q(x),r(x)\in\Bbbk[x]$ and  $\deg(r(x))< \deg(h(x))$.
Then
\[
[u,-]=ad_f+\Delta_{-r(x)},
\]
where $f\in\Bbbk[x]$ is any polynomial that satisfies $f'(x)=q(x)$. 
\end{remark}

\section*{Acknowledgments}

The authors are grateful to Prof. Ivan Pan (CMAT-UdelaR) for valuable suggestions and comments.
The authors were partially supported by the Rio Grande do Sul Research Foundation (FAPERGS) projects n. 24/2551-0001560-4 (R.B.), 23/2551-0000913-7 (L.D.S.) and 23/2551-0000803-3 (G.M.).

\bibliographystyle{abbrv}

\bibliography{refs}

\bigskip

\noindent
\textsc{Rene Baltazar}\\
Universidade Federal do Rio Grande - FURG\\
Santo Antônio da Patrulha/RS, Brasil\\
\texttt{renebaltazar.furg@gmail.com}

\bigskip

\noindent
\textsc{Leonardo Duarte Silva}\\
Universidade Federal do Rio Grande do Sul - UFRGS\\
Porto Alegre/RS, Brasil\\
\texttt{dsleonardo@ufrgs.br}

\bigskip

\noindent
\textsc{Grasiela Martini}\\
Universidade Federal do Rio Grande do Sul - UFRGS\\
Porto Alegre/RS, Brasil\\
\texttt{grasiela.martini@ufrgs.br}

\end{document}